\newcommand{\faktor}[2]{{#1}/{#2}}
\numberwithin{equation}{section}
\theoremstyle{plain}
\newtheorem{THM}{Theorem}[section]
\newtheorem{PROP}[THM]{Proposition}
\newtheorem{LEM}[THM]{Lemma}
\newtheorem{COR}[THM]{Corollary}
\newtheorem{CLM}{Claim}
\newtheorem*{nCLM}{Claim}
\theoremstyle{definition}
\newtheorem{DEF}[THM]{Definition}
\newtheorem{EX}[THM]{Example}
\newtheorem{OBS}[THM]{Observation}
\theoremstyle{remark}
\newtheorem{REM}[THM]{Remark}
\renewcommand{\leq}{\leqslant}
\renewcommand{\geq}{\geqslant}
\renewcommand{\phi}{\varphi}
\renewcommand{\epsilon}{\varepsilon}
\newcommand{\bA}{\mathbf{A}}
\newcommand{\bB}{\mathbf{B}}
\newcommand{\bC}{\mathbf{C}}
\newcommand{\bD}{\mathbf{D}}
\newcommand{\bF}{\mathbf{F}}
\newcommand{\bH}{\mathbf{H}}
\newcommand{\bM}{\mathbf{M}}
\newcommand{\bN}{\mathbf{N}}
\newcommand{\bU}{\mathbf{U}}
\newcommand{\bV}{\mathbf{V}}
\newcommand{\bX}{\mathbf{X}}
\newcommand{\bY}{\mathbf{Y}}
\newcommand{\bZ}{\mathbf{Z}}
\newcommand{\sss}{\mathbf{s}}
\newcommand{\ttt}{\mathbf{t}}
\newcommand{\xx}{\mathbf{x}}
\newcommand{\yy}{\mathbf{y}}
\newcommand{\NN}{\mathbb{N}}
\newcommand{\QQ}{\mathbb{Q}}
\newcommand{\RR}{\mathbb{R}}
\newcommand{\ZZ}{\mathbb{Z}}
\newcommand{\restr}{\mathord{\upharpoonright}}
\newcommand{\embedsto}{\hookrightarrow}
\newcommand{\id}{\mathrm{id}}
\newcommand{\Age}{\operatorname{Age}}
\newcommand{\ar}{\operatorname{ar}}
\newcommand{\AP}{\operatorname{AP}}
\newcommand{\HP}{\operatorname{HP}}
\newcommand{\JEP}{\operatorname{JEP}}
\newcommand{\Sym}{\operatorname{Sym}}
\newcommand{\calC}{\mathcal{C}}
\newcommand{\calF}{\mathcal{F}}
\newcommand{\calK}{\mathcal{K}}
\newcommand{\calM}{\mathcal{M}}
\newcommand{\calN}{\mathcal{N}}
\newcommand{\calT}{\mathcal{T}}
\newcommand{\Tc}{\calT_C}
\newcommand{\Tcc}{\calT_{C'}}
\newcommand{\Tf}{\calT_\bF}
\newcommand{\Fraisse}{Fra\"\i{}ss\'e\xspace}
\newcommand{\Hubicka}{Hubi{\v{c}}ka\xspace}
\newcommand{\Nesetril}{Ne{\v{s}}et{\v{r}}il\xspace}
\newcommand{\Todorcevic}{Todor\v{c}evi\'c\xspace}
\newcommand{\Katetov}{Kat\v{e}tov\xspace}
\newcommand{\Masulovic}{Ma\v{s}ulovi\'c\xspace}
\newcommand{\Kubis}{Kubi\'s\xspace}
\newcommand{\Frechet}{Fr\'echet\xspace}
\newcommand{\Chvatal}{Chv\'a{}tal\xspace}
\DeclareMathOperator{\Aut}{Aut}
\DeclareMathOperator{\dom}{dom}
\DeclareMathOperator{\im}{im}
\DeclareMathOperator{\sgn}{sgn}
\DeclareMathOperator{\Hom}{Hom}
\DeclareMathOperator{\Iso}{Iso}
\DeclareMathOperator{\Lip}{Lip}
\DeclareMathOperator{\ob}{ob}
\DeclareMathOperator{\Mid}{Mid}
\newcommand{\dM}{d_M}
\newcommand{\dN}{d_N}
\newcommand{\cat}{\mathscr}
\newcommand{\Id}{\operatorname{Id}}
\newcommand{\dotcup}{\mathrel{\dot\cup}}
\newcommand{\ie}{i.e.\@\xspace}
\newcommand{\eg}{e.g.\@\xspace}
\title{Echeloned Spaces}
\subjclass[2020]{Primary: 03C50; Secondary: 05D10, 05C55, 54E35, 54E40, 54H11} %, 06A05}
\keywords{echeloned space, metric space, \Fraisse limit, random construction, Ramsey property, \Katetov functor}
\author[M.\,Gheysens]{Maxime Gheysens$^1$}
\address{$^1$Institute of Discrete Mathematics and Algebra\\
        Faculty of Mathematics and Computer Science\\
        Technische Universit\" at Bergakademie Freiberg\\
        D-09596 Freiberg\\
        Germany}
\email{maxime.gheysens@normalesup.org}
\thanks{The research of the first and the fifth author was supported by the German Academic Exchange Service (DAAD) through Project-ID 57602973}
\author[B.\,Pavlica]{Bojana Pavlica$^2$}
\address{$^2$Department of Mathematics and Informatics\\
        University of Novi Sad\\
        21000 Novi Sad\\
        Serbia} 
\email{bojana@dmi.uns.ac.rs}
\thanks{The research of the second and the fourth author was supported by the Ministry of  Science, Technological Development and Innovation of the Republic of Serbia through Grant No. 451-03-783/2021-09/3}
\author[Ch.\,Pech]{Christian Pech$^3$}
\address{$^3$Institute of Mathematics\\
        Czech Academy of Sciences\\
        \v{Z}itn\'a 25\\ 
        115\,67 Praha 1\\ 
        Czech Republic}
\email{pech@math.cas.cz}
\thanks{The research of the third author was supported by GA~\v{C}R (Czech Science Foundation) grant EXPRO 20-31529X}
\author[M.\,Pech]{Maja Pech$^{2\ast}$} 
\email{maja@dmi.uns.ac.rs}
\thanks{The fourth author was supported by the Ministry of  Science, Technological Development and Innovation of the Republic of Serbia through Grant No. 451-03-47/2023-01/200125}
\author[F.\,M.\,Schneider]{Friedrich Martin Schneider$^1$}
\email{Martin.Schneider@math.tu-freiberg.de}
\thanks{$^\ast$ corresponding author}
\begin{document}

\begin{abstract}
    We introduce the notion of echeloned spaces --- an order-theoretic abstraction of metric spaces. The first step is to characterize metrizable echeloned spaces. It turns out that morphisms between metrizable echeloned spaces are uniformly continuous or have a uniformly discrete image. In particular, every automorphism of a metrizable echeloned space is uniformly continuous, and for every metric space  with midpoints the automorphisms of the induced echeloned space are precisely the dilations. 

    Next we focus on finite echeloned spaces. They form a \Fraisse class and we describe its \Fraisse-limit both as the echeloned space induced by a certain homogeneous metric space and as the result of a random construction. Building on this we show that the class of finite ordered echeloned spaces is Ramsey. The proof of this result combines a combinatorial argument by \Nesetril and \Hubicka with a topological-dynamical point of view due to Kechris, Pestov, and \Todorcevic. Finally, using the method of \Katetov functors due to \Kubis and \Masulovic, we prove that the full symmetric group on a countable set topologically embeds into the automorphism group of the countable universal homogeneous echeloned space. 
\end{abstract}

\maketitle
	The notion of a metric is ubiquitous in mathematics. This is no doubt due to its great versatility for capturing information of topological, geometrical, and order-theoretical nature. 
	
	The first abstract definitions of metric spaces given by  \Frechet (see \cite[p.772]{Fre05}, \cite[p.18]{Fre06}) and Hausdorff (see \cite[p.211]{Hau14}) had as a goal to capture  the notion of convergence. This lead to the notion of a topology or of a uniformity induced by a metric and is certainly the main focus of research concerning metric spaces. 
	
	Geometrical aspects of abstract metric spaces were first treated by Menger (see \cite{Men28}). He studied convexity of metric spaces, and he examined  the basic properties of  betweenness relations. A complete axiomatization of betweenness relations  in metric spaces was given only recently by \Chvatal (see \cite{Chv04}).
	
	As an example of an order-theoretic aspect of metric spaces we mention the phenomenon of boundedness. The notion of abstract boundedness (now better known as bornology) was introduced and studied by Hu (see \cite{Hu49}). In particular, Hu characterized metrizable bornologies.  

    Last but not least, every metric space gives rise to a (bounded) coarse structure in the sense of Roe (see~\cite{Roe03}). Roughly speaking, such a coarse structure captures geometric properties of the space on a large scale (\ie  up to a uniformly bounded error).
  
	All the structures mentioned above have one thing in common: they are definable from metric spaces while the actual numerical distance between two points is of no great importance.  For instance, a metric  may be scaled by a positive real number without making any difference concerning convergence, boundedness, or convexity. For topological considerations only the very small distances are of interest, for coarse geometries large distances are relevant, and for geometrical considerations mainly qualitative properties like collinearity stand in the focus. Finally, in bornological considerations the order relation between distances is crucial. 
	
	Motivated by these observations, in this paper we introduce \emph{echeloned spaces}. These are spaces in which the closeness between pairs of points cannot be measured but only compared. 	Echeloned spaces appear to capture very well the order-theoretic aspects of metric spaces. 
	
		It should be mentioned that a notion similar to echeloned spaces was suggested by Pestov when discussing nearest neighbor classifiers in machine learning \cite[Observa\c{c}\~ao 5.4.40]{Pes19}. In contrast to our approach, Pestov compares distances of points to a given point. 
	
	In Section~\ref{sec_original} after the basic definitions we settle the question of metrizability of echeloned spaces (see Proposition~\ref{MScharacter}).

    Section~\ref{metrizableEchelonedSpaces} is concerned with morphisms between metrizable echeloned spaces. The main result of this section is a characterization of the automorphisms of echeloned spaces  induced by metric spaces with midpoints (see Proposition~\ref{similarity}).

    Section~\ref{sec_Fraisse} contains the proof of the existence of a countable universal homogeneous echeloned space $\bF$ (using \Fraisse's Theorem). It is shown that this space is not the echeloned space induced by the countable universal homogeneous rational metric space, a.k.a.\ the rational Urysohn space (see Corollary~\ref{notUry}). We proceed to showing that the edge-coloured graph induced by $\bF$ is in fact universal and homogeneous as an edge-coloured graph (see Theorem~\ref{prop_F-1isC-colHomUniGraph}), and we give a probabilistic construction of this graph (see Proposition~\ref{randConst}).

    In Section~\ref{sec_Ramsey} we show that the class of finite ordered echeloned spaces has the Ramsey property in the sense of \cite{Nes05} (see Theorem~\ref{thm_ShasRamsey}). The proof combines a combinatorial result by \Hubicka and \Nesetril  \cite{hubivcka2019all} with the Kechris-Pestov-\Todorcevic correspondence \cite{kechris2003fraisse}.

    In Section~\ref{sec_Katetov} it is shown that the category of finite echeloned spaces with embeddings may be endowed with a \Katetov functor in the sense of \cite{KubMas16}. As a direct consequence we obtain that the automorphism group of the countable universal homogeneous echeloned space contains the full symmetric group on a countable set as a closed topological subgroup.

    Throughout the paper we use standard model-theoretic notation and notions, see \cite{hodges1993model}.

\section{Echeloned spaces}\label{sec_original}

We define echeloned spaces as structures whose pairs of points are comparable. 

\begin{DEF}\label{orgDEF_sim} Let $X$ be a non-empty set. Then, a pair $\bX=(X,\leq_\bX)$ is called an \emph{echeloned space} if $(X^2,\leq_\bX)$ is a prechain\footnote{A pair $(C,\leq)$ is called a \emph{prechain} if $\leq$ is a total preorder on a set $C$, \ie $\leq$ is a reflexive, transitive, and linear binary relation on $C$.} satisfying
\begin{enumerate}[label=(\roman*), ref=(\roman*)]
\item for all $x, y, z \in X$: $(x,x) \leq_\bX (y, z)$,
\item for all $x, y, z \in X$ if $(y, z) \leq_\bX (x,x)$ then $y = z$, and
\item for all $x, y \in X$: $(x, y) \leq_\bX (y,x)$.
\end{enumerate}
The relation $\leq_\bX$ is called an \emph{echelon} on $X$. 

Given an echelon $\leq_\bX$ on a set $X$, we introduce $\sim_\bX\, \subseteq X^2$ as follows
\[(x_1, y_1) \sim_\bX (x_2, y_2) \quad :\Longleftrightarrow \quad (x_1, y_1) \leq_\bX (x_2, y_2) \quad \textrm{ and } \quad (x_2, y_2) \leq_\bX (x_1, y_1).\]
\end{DEF}

\begin{REM}\label{rem_echelon}
Formally, one could have introduced echeloned spaces as relational structures over a signature $\{\lesssim\}$, where $\ar(\lesssim) = 4$. Then, an echeloned space $(X, \leq_\bX)$ would in fact be a $\{\lesssim\}$-structure $\bX = (X, \lesssim_\bX)$ where $(x_1, y_1, x_2, y_2) \in{} \lesssim_\bX$ if and only if $(x_1, y_1) \leq_\bX (x_2, y_2)$. 
This translation suggests a natural definition for homomorphisms and embeddings between echeloned spaces: if $\bX$ and $\bY$ are two echeloned spaces, then a map $f\colon X\to Y$ is going to be called a homomorphism (embedding) from $\bX$ to $\bY$ if and only if it is a homomorphism (embedding) between their corresponding $\{\lesssim\}$-structures $(X,\lesssim_\bX)$ and $(Y,\lesssim_\bY)$. 
\end{REM}

Clearly, for any echeloned space $\bX = (X, \leq_\bX)$, the relation  $\sim_\bX$ is an equivalence relation on $X^2$. The echelon $\leq_\bX$ naturally induces a linear ordering on the quotient set $\faktor{X^2}{\sim_\bX}$, written in symbols as $\leq_{E(\bX)}$, as follows:
\[ 
\quad [(x_1, x_2)]_{\sim_\bX} \leq_{E(\bX)} [(y_1, y_2)]_{\sim_\bX} \quad :\Longleftrightarrow \quad (x_1, x_2) \leq_\bX (y_1, y_2). 
\]
We shall refer to $E(\bX) \coloneqq  \left(\faktor{X^2}{\sim_\bX},{} \leq_{E(\bX)}\right)$ as the \emph{echeloning} of $\bX$. Lastly, let $\eta_{\bX} \colon X^2 \twoheadrightarrow E(\bX)$ be the quotient map.

\begin{LEM}\label{lem_sim_HOM}
Let $\bX$ and $\bY$ be two echeloned spaces. Then, a map $h \colon X \to Y$ is a  homomorphism from $\bX$ to $\bY$ if and only if there exists a (necessarily unique) homomorphism of ordered sets $\hat{h} \colon E(\bX) \to E(\bY)$ for which $\hat{h} \circ \eta_\bX = \eta_\bY \circ h^2$, \ie the diagram below commutes:
\[\begin{tikzcd}
X^2 \arrow[r, twoheadrightarrow, "\eta_\bX"]\arrow[d, rightarrow, "h^2"{name=L, left}] & E(\bX)\arrow[d, rightarrow, "\hat{h}"] \\
Y^2 \arrow[r, twoheadrightarrow, "\eta_\bY"]& E(\bY).
\end{tikzcd}
\]
\end{LEM}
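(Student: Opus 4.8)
The plan is to recognize this as the universal property of the quotient map $\eta_\bX\colon X^2\twoheadrightarrow E(\bX)=\faktor{X^2}{\sim_\bX}$, after unwinding (via Remark~\ref{rem_echelon}) that $h$ being a homomorphism of the associated $\{\lesssim\}$-structures means precisely: $(x_1,y_1)\leq_\bX(x_2,y_2)$ implies $(h(x_1),h(y_1))\leq_\bY(h(x_2),h(y_2))$ for all $x_1,y_1,x_2,y_2\in X$.

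For the implication ``$\Leftarrow$'', assume such a $\hat h$ exists. Given $(x_1,y_1)\leq_\bX(x_2,y_2)$, the definition of $\leq_{E(\bX)}$ yields $\eta_\bX(x_1,y_1)\leq_{E(\bX)}\eta_\bX(x_2,y_2)$; applying the order-preserving map $\hat h$ and then the commutativity $\hat h\circ\eta_\bX=\eta_\bY\circ h^2$ gives $\eta_\bY(h(x_1),h(y_1))\leq_{E(\bY)}\eta_\bY(h(x_2),h(y_2))$, which by the definition of $\leq_{E(\bY)}$ is exactly $(h(x_1),h(y_1))\leq_\bY(h(x_2),h(y_2))$. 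Hence $h$ is a homomorphism.

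For ``$\Rightarrow$'', suppose $h$ is a homomorphism. Since $\eta_\bX$ is surjective, the only map that can make the square commute is $\hat h\colon[(x_1,x_2)]_{\sim_\bX}\mapsto[(h(x_1),h(x_2))]_{\sim_\bY}$; this yields uniqueness for free, and it yields existence once one checks that the assignment is well defined and order-preserving. \emph{Well-definedness}: if $(x_1,x_2)\sim_\bX(y_1,y_2)$, then $(x_1,x_2)\leq_\bX(y_1,y_2)$ and $(y_1,y_2)\leq_\bX(x_1,x_2)$, and applying the homomorphism property of $h$ to each of these inequalities gives $(h(x_1),h(x_2))\sim_\bY(h(y_1),h(y_2))$. \emph{Order-preservation}: if $[(x_1,x_2)]_{\sim_\bX}\leq_{E(\bX)}[(y_1,y_2)]_{\sim_\bX}$, then $(x_1,x_2)\leq_\bX(y_1,y_2)$, whence $(h(x_1),h(x_2))\leq_\bY(h(y_1),h(y_2))$, \ie $\hat h([(x_1,x_2)]_{\sim_\bX})\leq_{E(\bY)}\hat h([(y_1,y_2)]_{\sim_\bX})$. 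The identity $\hat h\circ\eta_\bX=\eta_\bY\circ h^2$ then holds by construction.

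The whole argument is bookkeeping around a quotient, so I do not expect a genuine obstacle. The only point to watch is that well-definedness of $\hat h$ really does require using \emph{both} inequalities encoded in $\sim_\bX$, and that a homomorphism of ordered sets is only required to preserve --- not reflect --- the order, so no converse implications enter anywhere.
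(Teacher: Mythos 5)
Your proof is correct and follows essentially the same route as the paper: in the forward direction you define $\hat h$ on classes by $[(x_1,x_2)]_{\sim_\bX}\mapsto[(h(x_1),h(x_2))]_{\sim_\bY}$, verify well-definedness (using both inequalities in $\sim_\bX$) and monotonicity, and note commutativity holds by construction; in the backward direction you push an inequality through $\eta_\bX$, $\hat h$, and the commuting square exactly as the paper does. Your explicit remark that surjectivity of $\eta_\bX$ forces uniqueness of $\hat h$ is a small, welcome addition the paper leaves implicit.
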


\begin{proof}
``$\Rightarrow$'': First, assume that $h \colon X \to Y$ is a homomorphism between the echeloned spaces $\bX$ and $\bY$. Define $\hat{h} \colon \faktor{X^2}{\sim_\bX} \to \faktor{Y^2}{\sim_\bY},\, [(x_1, x_2)]_{\sim_\bX} \mapsto [(h(x_1), h(x_2))]_{\sim_\bY}$. First, we show that it is well-defined. Let $x_1, x_2, x_1', x_2' \in X$ and $(x_1, x_2) \sim_\bX (x_1', x_2')$, \ie $[(x_1, x_2)]_{\sim_\bX} = [(x_1', x_2')]_{\sim_\bX}$. Then, $(h(x_1), h(x_2)) \sim_\bY (h(x_1'), h(x_2'))$ since $h$ preserves $\leq_\bX$. Consequently, \[\hat{h}([(x_1, x_2)]_{\sim_\bX}) = [(h(x_1), h(x_2))]_{\sim_\bY} = [(h(x_1'), h(x_2'))]_{\sim_\bY} = \hat{h}([(x_1', x_2')]_{\sim_\bX}).\]
Next, we prove that $\hat{h}$ preserves $\leq_{E(\bX)}$. Take any $x_1, x_2, x_1', x_2' \in X$ such that $[(x_1, x_2)]_{\sim_\bX} \leq_{E(\bX)} [(x_1', x_2')]_{\sim_\bX}$. By definition, this means that $(x_1, x_2) \leq_\bX (x_1', x_2')$. Consequently, given that $h$ is a homomorphism it holds that $(h(x_1), h(x_2)) \leq_\bY (h(x_1'), h(x_2'))$. So, 
 \[\hat{h}([(x_1, x_2)]_{\sim_\bX}) = [(h(x_1), h(x_2))]_{\sim_\bY} \leq_{E(\bY)} [(h(x_1'), h(x_2'))]_{\sim_\bY} = \hat{h}([(x_1', x_2')]_{\sim_\bX}).\]
Finally, we show that for our choice of $\hat{h}$ the above diagram commutes. Take any $x_1, x_2 \in X$. Then, 
\[\hat{h} \circ \eta_\bX (x_1, x_2) = \hat{h} ([(x_1, x_2)]_{\sim_\bX}) = [(h(x_1), h(x_2))]_{\sim_\bY} = \eta_\bY(h(x_1), h(x_2)) = \eta_\bY \circ h^2 (x_1, x_2), \]
which is what we needed to show.

``$\Leftarrow$'': Assume $\hat{h} \colon E(\bX) \to E(\bY)$ is a homomorphism and $h \colon X \to Y$ a map such that $\hat{h} \circ \eta_\bX = \eta_\bY \circ h^2$. We prove $h$ is a homomorphism from $\bX$ to $\bY$. Take any $x_1, x_2,x_1', x_2' \in X$ such that $(x_1, x_2) \leq_\bX (x_1', x_2')$. This is equivalent to saying that 
\[ \eta_\bX(x_1, x_2) = [(x_1, x_2)]_{\sim_\bX} \leq_{E(\bX)} [(x_1', x_2')]_{\sim_\bX} = \eta_\bX(x_1', x_2'), \]
by definition of $\leq_{E(\bX)}$. It follows that:
\[ \hat{h}\circ \eta_\bX (x_1, x_2) = \hat{h} ([(x_1, x_2)]_{\sim_\bX}) \leq_{E(\bY)} \hat{h}([(x_1', x_2')]_{\sim_\bX}) = \hat{h}\circ \eta_\bX (x_1', x_2').\]
Now, from the commutativity of the given diagram, that is equivalent to:
\[\eta_\bY(h(x_1), h(x_2)) =  \eta_\bY \circ h^2 (x_1, x_2)  \leq_{E(\bY)} \eta_\bY \circ h^2 (x_1', x_2') = \eta_\bY(h(x_1'), h(x_2')).\]
In other words, $[(h(x_1), h(x_2))]_{\sim_\bY} \leq_{E(\bY)} [(h(x_1'), h(x_2'))]_{\sim_\bY}$ or better yet $(h(x_1), h(x_2)) \leq_\bY (h(x_1'), h(x_2'))$, so $h$ does preserve $\leq_\bX$ and is thus a homomorphism. 
\end{proof}

\begin{COR}\label{cor_sim_EMB}
Let $\bX$ and $\bY$ be two echeloned spaces. Then, $h \colon \bX \to \bY$ is an embedding if and only if there exists an embedding of ordered sets $\hat{h} \colon E(\bX) \embedsto E(\bY)$ for which $\hat{h} \circ \eta_\bX = \eta_\bY \circ h^2$, \ie the diagram below commutes:
\[\begin{tikzcd}
X^2 \arrow[r, twoheadrightarrow, "\eta_\bX"]\arrow[d, rightarrow, hook, "h^2"{name=L, left}] & E(\bX)\arrow[d, rightarrow, hook, "\hat{h}"] \\
Y^2 \arrow[r, twoheadrightarrow, "\eta_\bY"]& E(\bY).
\end{tikzcd}
\]    
\end{COR}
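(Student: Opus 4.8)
The plan is to deduce Corollary~\ref{cor_sim_EMB} from Lemma~\ref{lem_sim_HOM} with minimal extra work, exploiting the fact that an embedding of relational structures is precisely an injective \emph{strong} homomorphism, \ie an injective map that reflects the relation $\lesssim$ in addition to preserving it.

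First I would treat the direction ``$\Rightarrow$''. Suppose $h\colon\bX\to\bY$ is an embedding. In particular $h$ is a homomorphism, so Lemma~\ref{lem_sim_HOM} already hands us the unique homomorphism of ordered sets $\hat h\colon E(\bX)\to E(\bY)$ with $\hat h\circ\eta_\bX=\eta_\bY\circ h^2$, and $\hat h$ is given on classes by $[(x_1,x_2)]_{\sim_\bX}\mapsto[(h(x_1),h(x_2))]_{\sim_\bY}$. It remains to check that $\hat h$ is an embedding of ordered sets, \ie that it is injective and that it reflects $\leq_{E(\bX)}$. Both follow from the fact that $h$, being an embedding, reflects $\leq_\bX$: if $\hat h([(x_1,x_2)]_{\sim_\bX})\leq_{E(\bY)}\hat h([(x_1',x_2')]_{\sim_\bX})$, then $(h(x_1),h(x_2))\leq_\bY(h(x_1'),h(x_2'))$, whence $(x_1,x_2)\leq_\bX(x_1',x_2')$ and so $[(x_1,x_2)]_{\sim_\bX}\leq_{E(\bX)}[(x_1',x_2')]_{\sim_\bX}$; this gives both that $\hat h$ reflects the order and (applying it in both directions) that it is injective. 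Note we do \emph{not} need $h$ itself to be injective here, only that it reflects the echelon — and in fact injectivity of $h$ is automatic from axiom~(ii): if $h(x)=h(y)$ then $(h(x),h(y))\sim_\bY(h(x),h(x))$, and since embeddings reflect $\lesssim$ we get $(x,y)\leq_\bX(x,x)$, forcing $x=y$. I would include this remark since it is the one place the echeloned-space axioms (beyond being a prechain) are actually used.

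For ``$\Leftarrow$'', suppose $\hat h\colon E(\bX)\embedsto E(\bY)$ is an embedding of ordered sets and $h\colon X\to Y$ satisfies $\hat h\circ\eta_\bX=\eta_\bY\circ h^2$. By Lemma~\ref{lem_sim_HOM}, $h$ is already a homomorphism, so I only need that $h$ reflects $\leq_\bX$. Chasing the square backwards: if $(h(x_1),h(x_2))\leq_\bY(h(x_1'),h(x_2'))$, then $\eta_\bY\circ h^2(x_1,x_2)\leq_{E(\bY)}\eta_\bY\circ h^2(x_1',x_2')$, \ie $\hat h\circ\eta_\bX(x_1,x_2)\leq_{E(\bY)}\hat h\circ\eta_\bX(x_1',x_2')$; since $\hat h$ is an embedding of ordered sets it reflects the order, so $\eta_\bX(x_1,x_2)\leq_{E(\bX)}\eta_\bX(x_1',x_2')$, which by definition of $\leq_{E(\bX)}$ means $(x_1,x_2)\leq_\bX(x_1',x_2')$. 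Hence $h$ reflects the echelon and, together with being a homomorphism, is an embedding.

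I do not anticipate a genuine obstacle; the only point requiring a moment's care is the definitional subtlety flagged in Remark~\ref{rem_echelon}, namely what ``embedding'' means for echeloned spaces (an injective map inducing an isomorphism onto a substructure, equivalently an injective strong homomorphism for the arity-$4$ relation $\lesssim$). Once that is pinned down, the proof is a routine diagram chase parallel to that of Lemma~\ref{lem_sim_HOM}, and it would be reasonable to present it compactly — essentially: ``By Remark~\ref{rem_echelon} and Lemma~\ref{lem_sim_HOM} it suffices to observe that, given the commuting square, $h$ reflects $\lesssim$ if and only if $\hat h$ does,'' followed by the short argument above that injectivity of both maps is then automatic from the echelon axioms.
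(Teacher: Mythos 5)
Your proposal is correct and follows essentially the same route as the paper: both directions are deduced from Lemma~\ref{lem_sim_HOM} by a diagram chase, reflection of the order/echelon transfers across the commuting square, and axiom~(ii) of Definition~\ref{orgDEF_sim} supplies injectivity. The only (harmless) cosmetic difference is how injectivity is obtained --- you get injectivity of $\hat h$ from order reflection plus antisymmetry of $\leq_{E(\bX)}$ and injectivity of $h$ from reflection plus axiom~(ii), whereas the paper uses injectivity of $h$ (resp.\ of $\hat h$) directly in the corresponding direction.
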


\begin{proof}
``$\Rightarrow$'': Assume $h$ is an embedding from $\bX$ to $\bY$. Then by Lemma~\ref{lem_sim_HOM}, there exists such a homomorphism $\hat{h} \colon E(\bX) \to E(\bY)$ for which $\hat{h} \circ \eta_\bX = \eta_\bY \circ h^2$. In order to prove that it does not only preserve, but also reflects $\leq_{E(\bX)}$, take any $x_1, x_2, x_1', x_2' \in X$ such that 
$\hat{h}([(x_1, x_2)]_{\sim_\bX}) \leq_{E(\bY)} \hat{h}([(x_1', x_2')]_{\sim_\bX})$. Put differently, 
\[ \eta_\bY \circ h^2 (x_1, x_2) = \hat{h} \circ \eta_\bX (x_1, x_2) \leq_{E(\bY)} \hat{h} \circ \eta_\bX (x_1', x_2') =  \eta_\bY \circ h^2 (x_1', x_2'), \]
\ie  $[(h(x_1), h(x_2))]_{\sim_\bY} \leq_{E(\bY)} [(h(x_1'), h(x_2'))]_{\sim_\bY}$. By definition of $\leq_{E(\bY)}$ we actually get $(h(x_1), h(x_2)) \leq_\bY (h(x_1'), h(x_2'))$. Finally, as $h$ reflects $\leq_\bX$ then $(x_1, x_2) \leq_\bX (x_1', x_2')$ which is what we wanted. What remains is to show that $\hat{h}$ is injective. Take any $x_1, x_2, x_1', x_2' \in X$ such that $\hat{h}([(x_1, x_2)]_{\sim_\bX}) = \hat{h}([(x_1', x_2')]_{\sim_\bX})$. Similarly as before, by the commutativity of the diagram, we get that $(h(x_1), h(x_2)) = (h(x_1'), h(x_2'))$. As $h$ itself is injective it follows immediately that $(x_1, x_2) = (x_1', x_2')$.

``$\Leftarrow$'': Let $h \colon X \to Y$ be a map and assume the existence of such an $\hat{h}$ as described in the statement of the right-hand side of the corollary. First, by Lemma~\ref{lem_sim_HOM} we get that $h$ is a homomorphism from $\bX$ to $\bY$. Then, take any $x_1, x_2, x_1', x_2' \in X$ for which $(h(x_1), h(x_2)) \leq_\bY (h(x_1'), h(x_2'))$. Notice how this leads to $[(h(x_1), h(x_2))]_{\sim_\bY} \leq_{E(\bY)} [(h(x_1'), h(x_2'))]_{\sim_\bY}$, which in turn translates to 
\[ \hat{h} \circ \eta_\bX (x_1, x_2) = \eta_\bY \circ h^2 (x_1, x_2) \leq_{E(\bY)} \eta_\bY \circ h^2 (x_1', x_2') = \hat{h} \circ \eta_\bX (x_1', x_2'), \]
\ie $[(x_1, x_2)]_{\sim_\bX} \leq_{E(\bX)} [(x_1', x_2')]_{\sim_\bX}$. Thus, $(x_1, x_2) \leq_\bX (x_1', x_2')$. Lastly, we show that $h$ is injective. Take any $x_1, x_2 \in X$, for which $h(x_1) = h(x_2)$. Then $(h(x_1), h(x_2)) = (h(x_2), h(x_2))$, and so 
\[ \hat{h}([(x_1, x_2)]_{\sim_\bX}) = \eta_\bY (h(x_1), h(x_2)) = \eta_\bY (h(x_2), h(x_2)) =  \hat{h}([(x_2, x_2)]_{\sim_\bX}). \]
As a result of $\hat{h}$ being injective we get that $(x_1, x_2) \sim_\bX (x_2, x_2)$. However, by the axioms of an echeloned space this leads to $x_1 = x_2$. This concludes the proof. 
\end{proof}

As the next lemma shows, every metric space induces an echeloned space on the same set of points. 

\begin{LEM}\label{MSinduceS}
Let $(M, \dM)$ be a metric space. Define a binary relation  $\leq_\bM$ on $M^2$ such that for every $(x_1, y_1), (x_2, y_2) \in M^2$:
\[(x_1, y_1) \leq_\bM (x_2, y_2) \quad \Longleftrightarrow \quad \dM(x_1, y_1) \leq \dM(x_2, y_2).\]
Then $(M, \leq_\bM)$ is an echeloned space.
\end{LEM}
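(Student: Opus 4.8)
The plan is to pull back the total order on $\RR$ along the distance function. Observe that $\leq_\bM$ is, by construction, the preimage of $\leq$ under the map $\dM \colon M^2 \to \RR$, in the sense that $p \leq_\bM q$ precisely when $\dM(p) \leq \dM(q)$ (here I write $\dM(p)$ for $\dM(x,y)$ when $p = (x,y)$). Since $\leq$ is a total order on $\RR$ — in particular reflexive, transitive, and linear — these three properties transfer verbatim to $\leq_\bM$: reflexivity since $\dM(p) \leq \dM(p)$; transitivity since $\dM(p) \leq \dM(q)$ and $\dM(q) \leq \dM(r)$ force $\dM(p) \leq \dM(r)$; and linearity since for any $p,q \in M^2$ either $\dM(p) \leq \dM(q)$ or $\dM(q) \leq \dM(p)$. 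Hence $(M^2, \leq_\bM)$ is a prechain. I would note in passing that $\leq_\bM$ is typically not antisymmetric, which is exactly why a prechain is defined via a total preorder rather than a total order; this causes no trouble.

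It then remains to verify axioms (i)--(iii) of Definition~\ref{orgDEF_sim}, each of which is an immediate translation of a defining property of a metric. For (i): given $x,y,z \in M$, nonnegativity of $\dM$ together with $\dM(x,x) = 0$ gives $\dM(x,x) = 0 \leq \dM(y,z)$, so $(x,x) \leq_\bM (y,z)$. For (ii): if $(y,z) \leq_\bM (x,x)$, then $\dM(y,z) \leq \dM(x,x) = 0$, whence $\dM(y,z) = 0$ by nonnegativity, and therefore $y = z$ by the identity of indiscernibles. For (iii): symmetry of $\dM$ gives $\dM(x,y) = \dM(y,x)$, so $(x,y) \leq_\bM (y,x)$ (in fact $(x,y) \sim_\bM (y,x)$).

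There is no genuine obstacle in this lemma; it is a one-line reduction to the order structure of $(\RR, \leq)$ followed by three routine checks. The only point requiring a little care is not to over-claim: $\leq_\bM$ yields a total \emph{preorder}, so one should phrase the symmetry conclusion in (iii) using $\sim_\bM$ rather than equality, and one should not expect $E(\bM)$ to be isomorphic to $M^2$ under $\eta_\bM$.
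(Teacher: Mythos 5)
Your proof is correct and follows essentially the same route as the paper: the prechain property is pulled back from the linear order on $\RR$ via $\dM$, and axioms (i)--(iii) of Definition~\ref{orgDEF_sim} are checked directly from nonnegativity, identity of indiscernibles, and symmetry of the metric. Your closing remarks about preorder versus order are fine but not needed, since axiom (iii) only asks for $(x,y)\leq_\bM(y,x)$.
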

\begin{proof}
We proceed to prove that $\leq_\bM$ is an echelon on $M$. As $\leq$ is a linear order on $\RR$, $\leq_\bM$ is trivially a prechain on $M^2$ by its definition. For any $x, y, z \in M$:
\begin{enumerate}[label=(\roman*), ref=(\roman*)]
\item $\dM(x,x) = 0 \leq \dM(y, z)$, thus $(x,x) \leq_\bM (y, z)$.
\item if $(y, z) \leq_\bM (x, x)$, then $\dM(y, z) \leq \dM(x, x) = 0$. So $\dM(y, z) = 0$, and so $y = z$.
\item $\dM(x, y) = \dM(y, x)$, so $(x, y) \leq_\bM (y, x)$.\qedhere
\end{enumerate}
\end{proof}

Obviously, not every echeloned space is induced by a metric space in this way. A necessary condition for $\bX$ to be induced by a metric space is that $E(\bX)$ embeds into the chain of reals, see Examples \ref{ex_ordinalproduct} and \ref{ex_omegaone}.

\begin{DEF}
A metric space $(X, d_X)$ is called \emph{dull} if 
$d_X(x,x') \leq d_X(y,y') + d_X(z, z')$ holds, for all $x, x', y, y', z, z' \in X$ with $y \not= y'$ and $z \not= z'$.
\end{DEF}
Recall that a metric space is called \emph{uniformly discrete} if all nonzero distances in it are bounded from below by some constant $c>0$. 
\begin{LEM}\label{lem_dullMS}
Any dull metric space is both bounded and uniformly discrete.
\end{LEM}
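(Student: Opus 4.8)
The plan is to squeeze both conclusions out of a single observation: in a dull metric space the diameter is at most twice the infimum of the nonzero distances. First I would dispose of the trivial case $|X|\le 1$, where $X$ has diameter $0$ and is vacuously uniformly discrete. So assume $X$ has at least two points and set
\[
c \;\coloneqq\; \inf\{\, d_X(p,q) : p,q\in X,\ p\neq q \,\},
\]
which is a well-defined nonnegative real number, since the set on the right is nonempty and bounded above by any one of its elements.

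The key step is to feed a degenerate choice into the dullness inequality. Fix arbitrary $x,x'\in X$ and any pair $y\neq y'$ in $X$, and apply dullness with $z\coloneqq y$ and $z'\coloneqq y'$ (legitimate, as $y\neq y'$). This gives
\[
d_X(x,x') \;\le\; d_X(y,y') + d_X(y,y') \;=\; 2\,d_X(y,y').
\]
Holding $x,x'$ fixed and letting $y\neq y'$ range over all distinct pairs, I take the infimum of the right-hand side to obtain $d_X(x,x')\le 2c$. Since $x,x'$ were arbitrary, every distance in $X$ is at most $2c<\infty$, so $X$ is bounded.

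For uniform discreteness, pick any two distinct points $p\neq p'$. The previous paragraph gives $c\le d_X(p,p')\le 2c$. If $c=0$, this forces $d_X(p,p')=0$, hence $p=p'$, contradicting the choice of $p,p'$; therefore $c>0$. By the definition of $c$, every nonzero distance in $X$ is bounded below by $c>0$, so $X$ is uniformly discrete. I do not expect any genuine obstacle here: the only mildly non-obvious move is the substitution $z=y,\ z'=y'$, which collapses the right-hand side to $2\,d_X(y,y')$ and exposes that dullness bounds the diameter by twice the smallest nonzero distance; both assertions then follow immediately.
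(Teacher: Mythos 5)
Your proof is correct and follows essentially the same route as the paper: define $\tau$ (your $c$) as the infimum of the nonzero distances, use dullness to bound the diameter by $2\tau$, and deduce $\tau>0$ from the existence of two distinct points. The only difference is that you spell out the substitution $z=y$, $z'=y'$ and the passage to the infimum, which the paper leaves implicit.
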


\begin{proof}
Let $(X, d_X)$ be a dull metric space. Define $\tau \coloneqq  \inf\{d_X(x, y) \mid x, y \in X, x \not= y\}$. Clearly, the diameter of $(X,d_X)$  is not larger than $2 \tau$, due to dullness. Thus, any distance within $(X, d_X)$ is bounded by $2 \tau$. 

Without loss of generality, assume $|X| \not= 1$. In other words, there exist such $x, y \in X$ that $d_X(x, y) \not= 0$. As $\tau \leq d_X(x, y) \leq 2 \tau$, it follows that $\tau > 0$.  Consequently, as $0 < \tau \leq d_X(x, y)$ for all distinct $x, y \in X$, $(X, d_X)$ is uniformly discrete.
\end{proof}

Recall that a subset $S$ of a prechain $(C,\leq)$ is called \emph{order-dense} in $C$ if and only if, for every $a < b$ in $C\setminus S$, there exists  $s\in S$, such that  $a < s < b$.

\begin{PROP}\label{MScharacter}
Let $(X,\leq_\bX)$ be an echeloned space. Then the following are equivalent:
\begin{enumerate}[label=(\roman*), ref=(\roman*)]
    \item $\leq_{\bX}$ is induced by a metric space,
    \item $\leq_{\bX}$ is induced by a dull metric space,
    \item $(X^2,\leq_{\bX})$ contains a countable order-dense subset.
\end{enumerate}
\end{PROP}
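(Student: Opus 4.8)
The plan is to prove the chain of implications $(ii)\Rightarrow(i)\Rightarrow(iii)\Rightarrow(ii)$. The first is trivial since every dull metric space is in particular a metric space. The implication $(i)\Rightarrow(iii)$ should also be easy: if $\leq_\bX$ is induced by a metric $d_X$, then $E(\bX)$ embeds (as a linear order) into $(\RR_{\geq 0},\leq)$ via $[(x,y)]_{\sim_\bX}\mapsto d_X(x,y)$, and $\RR$ is separable in the order sense, so it contains a countable order-dense subset; pulling this back (together with, say, one representative of each $\sim_\bX$-class meeting the preimage) and lifting along the surjection $\eta_\bX\colon X^2\twoheadrightarrow E(\bX)$ yields a countable order-dense subset of $(X^2,\leq_\bX)$. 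One has to be slightly careful with the definition of order-dense — it only constrains gaps between elements \emph{outside} $S$ — but this makes the pull-back argument only easier.

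The substantial implication is $(iii)\Rightarrow(ii)$: from a countable order-dense subset of $(X^2,\leq_\bX)$ we must manufacture a dull metric on $X$ inducing the given echelon. The natural strategy is to first build an order-embedding $f\colon E(\bX)\to[1,2]$ (or some bounded interval bounded away from $0$) with $f$ mapping the bottom class $[(x,x)]_{\sim_\bX}$ to $0$ and every other class into $[1,2]$; this is possible precisely because a countable order-dense subset exists — this is the classical fact that a linear order embeds into $\RR$ iff it has a countable order-dense subset (one constructs $f$ by a back-and-forth / enumeration argument, placing the images of the countably many dense points first and then extending monotonically by suprema/infima). Then define $d_X(x,y):= f([(x,y)]_{\sim_\bX})= f(\eta_\bX(x,y))$. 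Symmetry $d_X(x,y)=d_X(y,x)$ follows from axioms (iii) together with the fact that $\sim_\bX$ is symmetric on the relevant pairs (combining (iii) for $(x,y),(y,x)$ gives $(x,y)\sim_\bX(y,x)$); $d_X(x,y)=0\iff x=y$ follows from axioms (i) and (ii). The triangle inequality, and more strongly dullness, then hold essentially \emph{for free}: any two nonzero distances lie in $[1,2]$ so their sum is at least $2$, while every distance is at most $2$, hence $d_X(x,x')\leq 2\leq d_X(y,y')+d_X(z,z')$ whenever $y\neq y'$ and $z\neq z'$; and the ordinary triangle inequality $d_X(x,z)\leq d_X(x,y)+d_X(y,z)$ splits into the trivial cases where one of the right-hand terms is $0$ (then the other equals the left side by reflexivity/the $\sim_\bX$-structure) and the case where both are nonzero (then the right side is $\geq 2\geq$ the left side). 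Finally $\leq_{\bM}$ coincides with $\leq_\bX$ because $f$ is an order-\emph{embedding} of $E(\bX)$, so $d_X(x_1,y_1)\leq d_X(x_2,y_2)\iff \eta_\bX(x_1,y_1)\leq_{E(\bX)}\eta_\bX(x_2,y_2)\iff (x_1,y_1)\leq_\bX(x_2,y_2)$.

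**The main obstacle** is the construction of the order-embedding $f\colon E(\bX)\hookrightarrow\RR$ from the countable order-dense subset — i.e., correctly invoking (or reproving) the theorem that a linear order with a countable order-dense subset embeds into the reals, while simultaneously arranging the normalization (bottom element to $0$, everything else into a fixed interval bounded away from $0$) that trivializes dullness. One subtlety worth checking is whether $(X^2,\leq_\bX)$ having a countable order-dense subset is equivalent to $E(\bX)$ having one: passing to the quotient can only help (a dense set in $X^2$ maps onto a set that is "dense" among non-class-boundary points of $E(\bX)$, and one can add the two extremal classes if needed), and conversely a countable dense subset of $E(\bX)$ lifts to one of $X^2$ via any section of $\eta_\bX$. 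A second point to handle carefully is the edge cases of the triangle inequality when some of the six points coincide, since dullness is only assumed for $y\neq y'$, $z\neq z'$ — but as sketched above these reduce to reflexivity and symmetry of the echelon and cause no real trouble.
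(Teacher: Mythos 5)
Your proposal is correct and follows essentially the same route as the paper: the same cycle $(ii)\Rightarrow(i)\Rightarrow(iii)\Rightarrow(ii)$, with $(iii)\Rightarrow(ii)$ handled by order-embedding the echeloning into $\{0\}\cup[1,2]$ with the bottom class sent to $0$ (the paper uses a transversal and $\{0\}\cup(1,2)$) so that symmetry, the triangle inequality and dullness come for free, and $(i)\Rightarrow(iii)$ via the Birkhoff-type characterization of suborders of $\RR$. The one step you flag but do not fully carry out --- transferring order-density between the prechain $(X^2,\leq_\bX)$ and the chain $E(\bX)$, which is genuinely delicate under the paper's strict, complement-restricted definition of order-dense when $\sim_\bX$-classes are large --- is passed over with exactly the same brevity (``it is easy to see'') in the paper's own proof, so it is not a gap relative to the paper.
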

\begin{proof}
We show $(ii)\Rightarrow (i)\Rightarrow (iii)\Rightarrow (ii)$. It is clear that $(ii)\Rightarrow (i)$.

    ``$(i)\Rightarrow (iii)$'': Let $X$ be a set and $\leq_\bX$ an echelon on it induced by a metric space $(X, d_X)$. Clearly, $\im d_X$ is order-embeddable into the reals, given that $d_X$ is a metric. Thus, due to a classical result by Birkhoff (see \cite[Theorem VIII.24]{birkhoff1967lattice}), $\im d_X$ contains a countable order-dense subset. Let $T \subseteq X^2$ be a transversal of $\sim_\bX$. Observe that $(T, \leq_\bX) \cong (\im d_X, \leq)$.  Let $S \subseteq T$ be countable order-dense in $(T, \leq_\bX)$. Then it is easy to see that $S$ is also order-dense in $(X^2, \leq_\bX)$. 
    
   ``$(iii)\Rightarrow (ii)$'': Let $S$ be a countable order-dense subset of $X^2$.  
   Without loss of generality, we may assume that for any $(a,b), (c,d) \in S$: $(a,b) \not\sim_\bX (c,d)$. Additionally, we may expand $S$ to a transversal $T$ of  $\sim_\bX$. Clearly,  the set $T$, ordered by $\leq_\bX$,  is a chain that contains $S$ as a  countable order-dense subset. Moreover, $T$ contains a single identical pair as its smallest element.  By \cite[Theorem VIII.24]{birkhoff1967lattice}, there does exist an order embedding $f$ of $T$ into the reals. Without loss of generality, we may assume that the image of $f$ is contained in $(\{0\} \cup (1, 2), \leq)$, and that $f$ maps the identical pair to $0$.    
   
   In the following, for each pair  $(x,y)$ from $X^2$ let $(x,y)_T$ denote  the unique pair from $T$  that is in the same $\sim_\bX$-class as $(x,y)$.
   Now we define a map $d_X$ on $X^2$ as follows: for any $(x, y) \in X^2$ let $d_X(x, y) \coloneqq  f((x,y)_T)$. It remains to show that it is a well-defined metric.

   To begin with, notice that for any $x \in X$ the distance $d_X(x, x) = f ((x,x)_T) = 0$. For any $x, y \in X$, $d_X (x, y) \geq 0$. Also, since $(x, y) \sim_\bX (y, x)$ then $d_X (x, y) = d_X (y, x)$. At last, take any $x, y, z \in X$. 
   If $x$, $y$, and $z$ are pairwise distinct, then \[d_X(x, y) = f((x, y)_T) < 2 = 1 + 1 < f((x, z)_T) + f((z, y)_T) = d_X(x, z) + d_X(z, y).\] Otherwise, if $x=y$, then $d_X(x, y) = 0 \leq d_X(x, z) + d_X(z, y)$; whereas if $x=z$ or $y=z$, then the triangle inequality trivially holds.

   Overall, $(X, d_X)$ is indeed a metric space (inducing the echeloned space $(X, \leq_\bX)$). Moreover, by definition, it is dull.
\end{proof}

An echeloned space that is induced by a metric space will be called \emph{metrizable}. As a direct consequence of Proposition~\ref{MScharacter} we obtain:
\begin{COR}
Every echeloned space on a countable set is metrizable.
\end{COR}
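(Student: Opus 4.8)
The final statement to prove is the corollary: every echeloned space on a countable set is metrizable. This is a one-line consequence of Proposition~\ref{MScharacter}, so the plan is short.

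\medskip

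The plan is to invoke the equivalence $(i)\Leftrightarrow(iii)$ of Proposition~\ref{MScharacter}. Let $\bX=(X,\leq_\bX)$ be an echeloned space with $X$ countable. Then $X^2$ is countable as well, so the whole set $X^2$ is a countable subset of the prechain $(X^2,\leq_\bX)$, and it is trivially order-dense in itself (vacuously: there is no pair $a<b$ in $X^2\setminus X^2=\emptyset$). Hence condition $(iii)$ of Proposition~\ref{MScharacter} holds, and therefore by that proposition $\leq_\bX$ is induced by a metric space, \ie $\bX$ is metrizable.

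\medskip

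There is essentially no obstacle here; the only thing to be slightly careful about is that the definition of order-dense quantifies over pairs $a<b$ lying in the complement $C\setminus S$, so taking $S=C$ makes the condition hold vacuously regardless of the structure of $C$. One could alternatively note that any countable order-dense subset of $X^2$ exists because $X^2$ itself is countable — but picking $S=X^2$ directly is the cleanest route and avoids even having to think about density. So the proof is simply:

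\begin{proof}
Let $\bX = (X, \leq_\bX)$ be an echeloned space with $X$ countable. Then $X^2$ is countable, and it is trivially an order-dense subset of the prechain $(X^2, \leq_\bX)$, since there is no pair $a < b$ in $X^2 \setminus X^2 = \emptyset$. Thus condition $(iii)$ of Proposition~\ref{MScharacter} is satisfied, so by that proposition $\leq_\bX$ is induced by a (dull) metric space. Hence $\bX$ is metrizable.
\end{proof}
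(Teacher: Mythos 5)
Your proof is correct and matches the paper's approach: the paper states the corollary as a direct consequence of Proposition~\ref{MScharacter}, and your argument simply makes the implicit step explicit, namely that for countable $X$ the set $X^2$ itself is a countable subset of $(X^2,\leq_\bX)$ that is vacuously order-dense, so condition $(iii)$ holds and metrizability follows from $(iii)\Rightarrow(i)$.
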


What follows are two examples of non-metrizable echeloned spaces.

\begin{EX}\label{ex_ordinalproduct}
Observe the following chain $C = \RR_0^+ \odot \mathbf{2}$, where $\RR_0^+$ is the set of non-negative real numbers, and where $\mathbf{2} = \{0,1\}$ is the ordinal number 2. Recall that the \emph{lexicographic product} $X \odot Y$, of two disjoint posets $X$ and $Y$, is the set of all ordered pairs $(x, y)$ (where $x \in X, y \in Y$), ordered lexicographically, \ie by the rule that $(x,y) < (x', y')$ if and only if $x < x'$ or $x = x' \land y < y'$.  Clearly, the lexicographic product of any two chains is again a chain. Take any countable subset $S$ of $C$. As there exist uncountably many irrational numbers, there has to be some $a \in \RR_0^+ \setminus \QQ$ such that neither $(a, 0)$ nor $(a, 1)$ are in $S$. Due to the lexicographical ordering, there is no element of $C$, let alone of $S$, in between the formerly mentioned two. Therefore, $S$ cannot be order-dense in $C$. Since the choice of $S$ was arbitrary, by \cite[Theorem VIII.24]{birkhoff1967lattice}, $C$ is not embeddable into $\RR$.

Now, consider an echelon $\leq_\RR$ on the set of reals defined as follows:
\begin{align*}
(x,y) <_\RR (u,v) \quad :\Longleftrightarrow \quad  &\text{ either } \bigl||x|-|y|\bigr| < \bigl||u|-|v|\bigr|,\\ 
&\text{ or } \bigl||x|-|y|\bigr| = \bigl||u|-|v|\bigr|, \text{ but }\sgn(x) = \sgn(y),\, \sgn(u) \not= \sgn(v),
\end{align*}
for any $(x, y), (u, v) \in \RR^2$.   
Observe that the chain $(\faktor{\RR^2}{\sim_\RR}, \leq_\RR)$ is isomorphic to $(\RR^+_0 \odot \mathbf{2}, \leq)$. As a result of Proposition~\ref{MScharacter}, the latter echelon could not have been induced by a metric space.
\end{EX}

\begin{EX}\label{ex_omegaone}
Let $X \coloneqq  \{0,1\}^{\omega_1}$. Further, we define a map $f_\bX \colon X^2 \to \omega_1^+$ as follows. Let $\xx = (x_i)_{i \in \omega_1}$, $\yy = (y_i)_{i \in \omega_1} \in X$, then if $\xx \not= \yy$, we set $f_\bX(\xx,\yy) \coloneqq  k$, where $k$ is the smallest index for which $x_k \not= y_k$; otherwise $f_\bX(\xx,\yy) \coloneqq  \omega_1$. Moreover, we define a binary relation $\leq_\bX$ on $X^2$ so that
\[(\xx, \yy) \leq_\bX (\sss, \ttt) \quad :\Longleftrightarrow\quad f_\bX(\xx, \yy) \geq f_\bX(\sss, \ttt).\]
Clearly, $\leq_\bX$ is a well-defined echelon on $X$. Given that $f_\bX$ is surjective, the chain $\left(\faktor{X^2}{\sim_\bX}, \leq_\bX~\!\!\right)$ is isomorphic to $(\omega_1^+, \geq)$. As already $\omega_1$ does not order-embed into the reals\footnote{Assume for a contradiction that $f \colon \omega_1 \to \RR$ is an order-embedding. Then, for each $\alpha \in \omega_1$ choose a rational number $g(\alpha)$ such that $f(\alpha) < g(\alpha) < f(\alpha + 1)$. As a result $g \colon \omega_1 \to \QQ$ is an injection, which is impossible.}, neither does $\omega_1^+$. By \cite[Theorem VIII.24]{birkhoff1967lattice}\footnote{During this study it was realised that the proof given in the reference was not correct. The claim, however, still holds, and the corrected proof will appear in the PhD Thesis of the second author} $(\omega_1^+, \geq)$ does not have a countable order-dense subset. Consequently, by Proposition~\ref{MScharacter} the echelon $\leq_\bX$ is not induced by a metric space.
\end{EX}

We have already provided a full characterisation of metrizable echeloned spaces, cf. Proposition~\ref{MScharacter}. 

For any two metric spaces $\calM = (M, \dM)$ and $\calN = (N, \dN)$, we define $\Hom(\calM, \calN)$ as the set of all $1$-Lipschitz maps from $\calM$ to $\calN$. Note that not every homomorphism between metric spaces is at the same time a homomorphism between the echeloned spaces induced by them, cf. Example~\ref{ex_1LipNOTSimHom}. 

Let $\Lip^{\leq}_1(\calM, \calN)$ denote the set of all 1-Lipschitz maps between the metric spaces $\calM$ and $\calN$ that preserve the echelon relations of $\calM$ and $\calN$.

\begin{PROP}
Let $\bM = (M, \leq_\bM)$ and $\bN = (N, \leq_\bN)$ be two metrizable echeloned spaces. Then, there exist metric spaces $\calM$ and $\calN$, that induce $\bM$ and $\bN$, respectively, and for which
\[\Hom(\bM, \bN) = \Lip^{\leq}_1(\calM, \calN).\]

\end{PROP}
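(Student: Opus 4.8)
The plan is to construct, for each metrizable echeloned space, a metric realizing its echelon in which distances are ``spread out enough'' that the echelon order is recoverable from the metric, and simultaneously ``compressed enough'' that every echelon-preserving map is automatically $1$-Lipschitz. Concretely, I would revisit the construction in the proof of Proposition~\ref{MScharacter}: there, a countable order-dense transversal was embedded into $\{0\}\cup(1,2)$. The key realization is that the specific target interval is irrelevant — what matters is that all nonzero distances lie in an interval $[c, 2c]$, since then dullness holds automatically and any map sending nonzero-distance pairs to nonzero-distance pairs cannot increase distances by more than a factor that we can control. So first I would fix, for both $\bM$ and $\bN$, metrics $\calM=(M,d_M)$ and $\calN=(N,d_N)$ inducing the respective echelons and taking all nonzero values in the \emph{same} interval, say $[1,2]$ (this uses the argument of Proposition~\ref{MScharacter}\,$(iii)\Rightarrow(ii)$, replacing $\{0\}\cup(1,2)$ by $\{0\}\cup[1,2]$, which is harmless).

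Next I would prove the inclusion $\Hom(\bM,\bN)\subseteq \Lip^{\leq}_1(\calM,\calN)$. Let $h\colon \bM\to\bN$ be an echeloned-space homomorphism. By Lemma~\ref{lem_sim_HOM} it induces $\hat h\colon E(\bM)\to E(\bN)$ preserving the order. I must check $d_N(h(x),h(y))\le d_M(x,y)$ for all $x,y$. If $x=y$ both sides are $0$. If $x\ne y$, then by echeloned-space axiom (ii) and the fact that $h$ preserves $\leq_\bM$, we get $(h(x),h(y))\not\sim_\bN (z,z)$ for $z\in N$ — wait, this needs $h(x)\ne h(y)$, which follows since $(h(x),h(x))\leq_\bN (h(x),h(y))$ would combine with $(h(x),h(y))\leq_\bN(h(x),h(x))$ (if they were $\sim_\bN$-equivalent) to force $h(x)=h(y)$ by axiom (ii); but actually $h(x)=h(y)$ \emph{can} happen. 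If $h(x)=h(y)$ then $d_N(h(x),h(y))=0\le d_M(x,y)$, fine. If $h(x)\ne h(y)$, then $d_N(h(x),h(y))\in[1,2]$ while $d_M(x,y)\ge 1$ (since $x\ne y$), so $d_N(h(x),h(y))\le 2\le 2\,d_M(x,y)$ — this only gives $2$-Lipschitz, not $1$-Lipschitz. So I need to be more careful.

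The fix, and what I expect to be the main obstacle, is choosing the embeddings into $\RR$ \emph{compatibly}: I want $d_N(h(x),h(y))\le d_M(x,y)$ to follow from $(h(x),h(y))\leq_\bN$-information that $h$ transports from $\bM$. The right approach is to not fix $\calN$ independently of $\calM$, but to build both metrics so that \emph{every} echelon-preserving map is $1$-Lipschitz — this is possible because, for a \emph{fixed} pair $(\bM,\bN)$, the set of homomorphisms $\Hom(\bM,\bN)$ is what we need to accommodate. Here is the cleaner route: realize $E(\bN)$ by an order-embedding $g\colon E(\bN)\to\RR$ into $\{0\}\cup[1,2]$; then realize $E(\bM)$ by an order-embedding $f\colon E(\bM)\to\RR$ into $\{0\}\cup[1,2]$ \emph{as well}, using $(iii)\Rightarrow(ii)$ of Proposition~\ref{MScharacter}. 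Now given any $h\in\Hom(\bM,\bN)$ and $x\ne y$ in $M$: if $h(x)=h(y)$ the Lipschitz inequality is trivial; if $h(x)\ne h(y)$, then $d_N(h(x),h(y))=g(\eta_\bN(h(x),h(y)))\le 2$ and $d_M(x,y)=f(\eta_\bM(x,y))\ge 1$, so we need the constant $2$ gone. Therefore I would instead place $\im d_N\setminus\{0\}$ inside $[1,2]$ but $\im d_M\setminus\{0\}$ inside $[2,4]$ — then $d_N(h(x),h(y))\le 2\le d_M(x,y)$ always, giving $\Hom(\bM,\bN)\subseteq\Lip_1^{\leq}(\calM,\calN)$ immediately, and both spaces are still dull (nonzero distances in $[a,2a]$). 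The reverse inclusion $\Lip_1^{\leq}(\calM,\calN)\subseteq\Hom(\bM,\bN)$ is immediate by definition, since a $1$-Lipschitz map preserving echelon relations is exactly an echeloned-space homomorphism between the induced echeloned spaces (using Lemma~\ref{MSinduceS} and Remark~\ref{rem_echelon}). So the proof reduces to: apply Proposition~\ref{MScharacter}\,$(iii)\Rightarrow(ii)$ twice with the target intervals $[2,4]$ for $\calM$ and $[1,2]$ for $\calN$ (trivially adjusting the ``$(1,2)$'' in that proof to any bounded interval bounded away from $0$), then observe the two inclusions. The only genuine care needed is verifying that the construction in Proposition~\ref{MScharacter} indeed produces a metric with $\im d\setminus\{0\}$ in the prescribed interval and still inducing the given echelon — which it does, since scaling and translating an order-embedding into $\RR$ preserves order-embeddedness, and the triangle inequality for a dull-type metric with values in $[a,2a]\cup\{0\}$ holds automatically.
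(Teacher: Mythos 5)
Your proposal is correct and follows essentially the same route as the paper: realize both echelons by dull metrics via Proposition~\ref{MScharacter}, rescale so that the nonzero distances of $\calM$ lie above those of $\calN$ (the paper uses $\{0\}\cup(2,4)$ and $\{0\}\cup(1,2)$, you use $[2,4]$ and $[1,2]$, which is immaterial), so every echeloned-space homomorphism is automatically $1$-Lipschitz, with the reverse inclusion trivial. The initial detour through placing both images in $[1,2]$ is correctly recognized as insufficient and discarded, so no gap remains.
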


\begin{proof} The proof of Proposition~\ref{MScharacter} provides us with the existence of dull metrics $\dM$ and $\dN$ on $M$ and $N$ which induce the echelons $\leq_\bM$ and $\leq_\bN$. By rescaling $\dM$ we may assume that $\im \dM \subseteq \{0\} \cup (2, 4)$ and that $\im \dN \subseteq \{0\} \cup (1,2)$. Now, take any $f \in \Hom(\bM, \bN)$. As for any two distinct $x, y \in M$:
\[\dN (f(x), f(y)) < 2 < \dM (x,y),\]
$f$ is trivially a $1$-Lipschitz map between the metric spaces $\calM$ and $\calN$. 
Therefore $f\in\Lip^{\leq}_1(\calM, \calN)$. 
The reverse inclusion holds trivially.
\end{proof}

Already for finite echeloned spaces, there are $1$-Lipschitz maps that do not preserve the echelons, as can be seen from the example below.

\begin{EX} \label{ex_1LipNOTSimHom}
Consider the metric spaces $\calM$ and $\calN$, given by $M = \{x_1, x_2, x_3\}$ and $N = \{y_1, y_2, y_3\}$, 
and the metrics $d_M$ and $d_N$ such that 
\begin{align*}
    d_M(x_1, x_2) &= 2 \quad \text{and} \quad d_M (x_1, x_3) = d_M (x_2, x_3) = 4\\
    d_N(y_1, y_3) &= d_N (y_2, y_3) = 1 \quad \text{and} \quad d_N(y_1, y_2) = 2.
\end{align*}

\begin{center}
\begin{tikzpicture}[scale=1,baseline=0pt]
 \GraphInit[vstyle=Classic]
		\tikzset{VertexStyle/.style = {shape = circle, draw,minimum size = 2pt, inner sep = 2pt}}        
		\tikzset{EdgeStyle/.style = {-}}
		\tikzset{>=stealth'}		
   		\Vertex[x=0,y=-0.75,Lpos=-90,L=$y_1$]{y}
   		\Vertex[x=-1,y=0,Lpos=180,L=$y_3$]{z}
   		\Vertex[x=0,y=0.75,Lpos=90,L=$y_2$]{x}
        \Edge[label=$2$](x)(y)
        \Edge[label=$1$](y)(z)
        \Edge[label=$1$](x)(z)
        \Vertex[x=-6,y=-0.75,Lpos=-90,L=$x_1$]{y1}
   		\Vertex[x=-4,y=0,L=$x_3$]{z1}
   		\Vertex[x=-6,y=0.75,Lpos=90,L=$x_2$]{x1}
        \Edge[label=$2$](x1)(y1)
        \Edge[label=$4$](y1)(z1)
        \Edge[label=$4$](x1)(z1) 
        \draw [|-to](-3,0) -- (-2,0);
        \draw [|-to](-5.5,1.15) -- (-0.5,1.15);
        \draw [|-to](-5.5,-1.15) -- (-0.5,-1.15);
         \end{tikzpicture}
         \end{center}

Any map  $f\colon \calM\to\calN$ is $1$-Lipschitz, but there are maps that do not preserve echelons, for instance $f\colon x_i\mapsto y_i$.  
\end{EX}

\begin{PROP}\label{prop_hatf_id}
Let $\bX$ be an echeloned space with $E(\bX)$ well-ordered. If $f$ is an automorphism of $\bX$, then $\hat{f}\colon E(\bX)\to E(\bX)$ is the identity embedding.
\end{PROP}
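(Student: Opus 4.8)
The plan is to use the functoriality of the echeloning construction (encoded in Lemma~\ref{lem_sim_HOM} / Corollary~\ref{cor_sim_EMB}) to promote $\hat f$ from a mere order-embedding to an order-\emph{automorphism} of $E(\bX)$, and then to invoke the classical fact that a well-ordered set admits no non-trivial order-automorphism.

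First I would record the functoriality. By Corollary~\ref{cor_sim_EMB}, the automorphism $f$ induces an order-embedding $\hat f\colon E(\bX)\embedsto E(\bX)$ with $\hat f\circ\eta_\bX=\eta_\bX\circ f^2$; likewise $f^{-1}$ (also an automorphism of $\bX$) induces $\widehat{f^{-1}}$. Since $\widehat{f^{-1}}\circ\hat f$ satisfies $\bigl(\widehat{f^{-1}}\circ\hat f\bigr)\circ\eta_\bX=\widehat{f^{-1}}\circ\eta_\bX\circ f^2=\eta_\bX\circ (f^{-1})^2\circ f^2=\eta_\bX$, and the identity of $E(\bX)$ obviously satisfies the same equation with $\id_X^2$ in place of $(f^{-1}\circ f)^2$, the uniqueness clause of Lemma~\ref{lem_sim_HOM} gives $\widehat{f^{-1}}\circ\hat f=\id_{E(\bX)}$, and symmetrically $\hat f\circ\widehat{f^{-1}}=\id_{E(\bX)}$. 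Hence $\hat f$ is an order-automorphism of $E(\bX)$ with inverse $\widehat{f^{-1}}$.

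Next I would establish the auxiliary order-theoretic fact: if $(W,\leq)$ is well-ordered and $g\colon W\to W$ is an order-embedding, then $g(w)\geq w$ for every $w\in W$. Indeed, were the set $\{w\in W\mid g(w)<w\}$ non-empty it would have a least element $a$; then $g(a)<a$, so $g(a)$ does not lie in that set, i.e.\ $g(g(a))\geq g(a)$, contradicting that $g$ is strictly order-preserving and $g(a)<a$ forces $g(g(a))<g(a)$. Applying this both to $\hat f$ and to its inverse $\widehat{f^{-1}}$ yields, for every $e\in E(\bX)$, that $\hat f(e)\geq e$ and $\widehat{f^{-1}}(e)\geq e$; applying $\hat f$ (order-preserving) to the latter gives $e=\hat f\bigl(\widehat{f^{-1}}(e)\bigr)\geq\hat f(e)$. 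Combining the two inequalities gives $\hat f(e)=e$, so $\hat f$ is the identity embedding, as claimed.

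There is essentially no deep obstacle here; the only point requiring a little care is the first step, namely making precise that $\hat f$ is genuinely invertible as a morphism of ordered sets (as opposed to just an embedding), which is exactly why the uniqueness part of Lemma~\ref{lem_sim_HOM} is needed. After that, the statement reduces to the rigidity of well-orders under order-embeddings, which is routine.
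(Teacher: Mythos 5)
Your proof is correct and follows essentially the same route as the paper: the paper's proof simply notes that by Corollary~\ref{cor_sim_EMB} the map $\hat f$ is an automorphism of the echeloning and then invokes a ``standard induction argument'' for the rigidity of well-orders, which is exactly the content you spell out. Your careful justification that $\hat f$ is genuinely invertible (via $\widehat{f^{-1}}$ and the uniqueness clause of Lemma~\ref{lem_sim_HOM}) and the $g(w)\geq w$ argument are just the details the paper leaves implicit.
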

\begin{proof}
Let $f$ be an automorphism of $\bX$. As a consequence of Corollary~\ref{cor_sim_EMB}, $\hat{f}$ is an automorphism of the echeloning $E(\bX)$. Since $E(\bX)$ is well-ordered, a standard induction argument shows that $\hat{f}=\id_{E(\bX)}$.
\end{proof}

\begin{COR}\label{cor_aut=iso1}
Let $(X, d_X)$ be a metric space with $(d_X[X^2], \leq)$ well-ordered. Then the automorphism group of the induced echeloned space is the same as its isometry group, \ie $\Aut(X, \leq_\bX) = \Iso(X, d_X)$. 
\end{COR}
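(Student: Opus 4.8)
The plan is to prove the two inclusions $\Iso(X,d_X)\subseteq\Aut(X,\leq_\bX)$ and $\Aut(X,\leq_\bX)\subseteq\Iso(X,d_X)$ separately, the first being essentially immediate and the second being where the hypothesis is used.

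First I would check that every isometry is an automorphism of the induced echeloned space. If $f\colon X\to X$ is a bijection with $d_X(f(x),f(y))=d_X(x,y)$ for all $x,y$, then for any $(x_1,y_1),(x_2,y_2)\in X^2$ we have
\[
(x_1,y_1)\leq_\bX(x_2,y_2)\iff d_X(x_1,y_1)\leq d_X(x_2,y_2)\iff d_X(f(x_1),f(y_1))\leq d_X(f(x_2),f(y_2)),
\]
so $f$ preserves and reflects $\leq_\bX$; since $f$ is bijective, it is an automorphism. Hence $\Iso(X,d_X)\subseteq\Aut(X,\leq_\bX)$.

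For the converse, let $f\in\Aut(X,\leq_\bX)$. The point is that $(d_X[X^2],\leq)$ is order-isomorphic (via the quotient map $\eta_\bX$ restricted to a transversal) to $E(\bX)$, so the well-orderedness hypothesis says exactly that $E(\bX)$ is well-ordered. Thus Proposition~\ref{prop_hatf_id} applies and gives $\hat f=\id_{E(\bX)}$, i.e.\ by the commuting square of Corollary~\ref{cor_sim_EMB} we get $\eta_\bX\circ f^2=\hat f\circ\eta_\bX=\eta_\bX$. Concretely this means $(f(x),f(y))\sim_\bX(x,y)$ for all $x,y\in X$, i.e.\ $d_X(f(x),f(y))=d_X(x,y)$ for all $x,y$. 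Since $f$ is bijective, $f\in\Iso(X,d_X)$. The main obstacle is simply making the translation between ``$(d_X[X^2],\leq)$ well-ordered'' and ``$E(\bX)$ well-ordered'' precise enough to invoke Proposition~\ref{prop_hatf_id}; once that identification is in place the rest is a direct unwinding of definitions, and no new induction is needed beyond the one already hidden in Proposition~\ref{prop_hatf_id}.
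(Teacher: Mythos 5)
Your proposal is correct and follows the paper's own route: the paper also derives this corollary directly from Proposition~\ref{prop_hatf_id} (via the identification of $(d_X[X^2],\leq)$ with $E(\bX)$) together with the commuting square of Corollary~\ref{cor_sim_EMB}, with the easy inclusion $\Iso(X,d_X)\subseteq\Aut(X,\leq_\bX)$ being immediate. Your write-up just spells out the details the paper leaves implicit.
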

\begin{proof}
This follows immediately from Proposition~\ref{prop_hatf_id} and Corollary~\ref{cor_sim_EMB}. 
\end{proof}

\begin{COR}\label{cor_aut=iso2}
Let $\bX$ be a finite echeloned space induced by a metric space $(X, d_X)$. Then $\Aut(\bX) = \Iso(X, d_X)$. 
\end{COR}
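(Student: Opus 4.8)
The plan is to reduce Corollary~\ref{cor_aut=iso2} to Corollary~\ref{cor_aut=iso1} by observing that finiteness forces the echeloning to be well-ordered. First I would note that if $\bX$ is finite, then $X^2$ is a finite set, hence its quotient $E(\bX) = \faktor{X^2}{\sim_\bX}$ is a finite linearly ordered set. Every finite chain is well-ordered, so in particular $(d_X[X^2], \leq) \cong E(\bX)$ is well-ordered (using the isomorphism between $E(\bX)$ and the image of the distance function that was already exploited in the proof of Proposition~\ref{MScharacter}, or simply that any finite subset of $\RR$ is well-ordered).

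With that observation in hand, the statement is immediate: the hypotheses of Corollary~\ref{cor_aut=iso1} are satisfied for $(X, d_X)$, and it yields $\Aut(\bX) = \Aut(X, \leq_\bX) = \Iso(X, d_X)$. So the entire proof is essentially one sentence invoking the previous corollary once we have recorded that a finite metric space has well-ordered distance set.

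There is no real obstacle here; the only thing to be mildly careful about is the phrasing ``induced by a metric space'' — one should make explicit that $\leq_\bX = \leq_\bM$ for the metric $d_X$ in question so that $\Aut(\bX)$ really is $\Aut(X, \leq_\bX)$ in the sense of Corollary~\ref{cor_aut=iso1}, but this is just unwinding the definition of metrizability. Thus I would simply write: since $X$ is finite, $d_X[X^2]$ is a finite, hence well-ordered, subset of $\RR$, so the claim follows from Corollary~\ref{cor_aut=iso1}.
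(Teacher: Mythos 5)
Your proposal is correct and matches the paper's own argument: the paper likewise notes that every finite linear order is a well-order (so $(d_X[X^2],\leq)$ is well-ordered) and then invokes Corollary~\ref{cor_aut=iso1}. Nothing further is needed.
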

\begin{proof}
Since every finite linear order is a well-order, this follows immediately from Corollary~\ref{cor_aut=iso1}.
\end{proof}

Despite Corollaries~\ref{cor_aut=iso1} and \ref{cor_aut=iso2}, we shall see in Section~\ref{sec_Fraisse}, that the \Fraisse  limit of the class of all finite echeloned spaces is not induced by the \Fraisse limit of the class of all finite rational metric spaces, namely the rational Urysohn metric space.

\section{Echeloned structure of metric spaces}\label{metrizableEchelonedSpaces}

In this section we take a little detour by studying the echeloned structure of some nicely behaved metric spaces. We start by showing that under some mild restriction homomorphisms are uniformly continuous. 

\begin{PROP}\label{lem_f_unicont}
Let $(X, d_X)$ and $(Y, d_Y)$ be metric spaces inducing echelons $\leq_\bX$ and $\leq_\bY$, respectively. Let $f \colon (X, \leq_\bX) \to (Y, \leq_\bY)$ be such a homomorphism for which the metric space $(f[X], d_Y \restr_{f[X]})$ is not uniformly discrete. Then, $f$ is uniformly continuous.
\end{PROP}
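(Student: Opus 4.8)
The plan is to argue directly from the definition of a homomorphism of echeloned spaces together with the negation of uniform discreteness, without passing through the echeloning $E(\bX)$ at all. Since $\leq_\bX$ and $\leq_\bY$ are the echelons induced by $d_X$ and $d_Y$, the assumption that $f$ is a homomorphism unwinds (via Remark~\ref{rem_echelon}) to the single implication: for all $x,x',a,b \in X$, if $d_X(x,x') \leq d_X(a,b)$, then $d_Y(f(x),f(x')) \leq d_Y(f(a),f(b))$. This is the only property of $f$ we shall use, and it says precisely that a small distance in $X$ is forced to a small distance in $Y$.

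So fix $\epsilon > 0$; the task is to produce $\delta > 0$ such that $d_X(x,x') < \delta$ implies $d_Y(f(x),f(x')) < \epsilon$. Since $(f[X], d_Y\restr_{f[X]})$ is not uniformly discrete, the threshold $\epsilon$ fails to bound its nonzero distances from below, so there exist $u \neq v$ in $f[X]$ with $d_Y(u,v) < \epsilon$. Writing $u = f(a)$ and $v = f(b)$ with $a,b \in X$, we get $0 < d_Y(f(a),f(b)) < \epsilon$; and since $f(a) \neq f(b)$ forces $a \neq b$, the value $\delta := d_X(a,b)$ is strictly positive. Now if $x,x' \in X$ satisfy $d_X(x,x') < \delta = d_X(a,b)$, then in particular $d_X(x,x') \leq d_X(a,b)$, i.e. $(x,x') \leq_\bX (a,b)$; applying the homomorphism property gives $(f(x),f(x')) \leq_\bY (f(a),f(b))$, that is, $d_Y(f(x),f(x')) \leq d_Y(f(a),f(b)) < \epsilon$. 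Hence $f$ is uniformly continuous.

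There is no serious \emph{obstacle} here: the two points that require a moment's care are reading the hypothesis correctly (not uniformly discrete means that \emph{every} positive threshold is undercut by some nonzero distance, which we invoke at the threshold $\epsilon$) and using the homomorphism property in the right direction. One could alternatively first extract from Lemma~\ref{lem_sim_HOM} the non-decreasing ``comparison function'' $\phi$ with $d_Y(f(x),f(x')) = \phi(d_X(x,x'))$ and $\phi(0)=0$, then observe that if $\inf\{\phi(t) : t \in \im d_X,\ t>0\} > 0$ the image $f[X]$ would be uniformly discrete, so this infimum is $0$, whence uniform continuity follows by monotonicity of $\phi$; but this is strictly more machinery than the argument above. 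Finally, the degenerate case where $f[X]$ is a single point is simply excluded by the hypothesis, since a one-point metric space is vacuously uniformly discrete.
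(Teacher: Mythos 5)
Your proof is correct and follows essentially the same route as the paper: fix $\epsilon>0$, use the failure of uniform discreteness of $f[X]$ to find $a\neq b$ with $0<d_Y(f(a),f(b))<\epsilon$, take $\delta=d_X(a,b)>0$, and transport the comparison through the homomorphism property. If anything, your use of the non-strict relation $(x,x')\leq_\bX(a,b)$ is slightly cleaner than the paper's appeal to preservation of the strict relation, since homomorphisms are only guaranteed to preserve $\leq$.
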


\begin{proof}
Let $\varepsilon > 0$. Without loss of generality, assume $|X| > 1$. As $(f[X], d_Y \restr_{f[X]})$ is not uniformly discrete there exist $x_0, y_0 \in X$ such that $0 \not= d_Y(f(x_0), f(y_0)) \leq \varepsilon$. Note that $\delta \coloneqq  d_X(x_0, y_0) > 0$. We will show that for any $x, y \in X$ for which $d_X(x, y) < \delta$ it follows that $d_Y(f(x), f(y)) < \varepsilon$. 

Thus, take any $x, y \in X$, such that $d_X(x, y) < d_X(x_0, y_0)$. The latter is equivalent to $(x, y) <_\bX (x_0, y_0)$. As $f$ is a homomorphism between the induced echeloned spaces, $(f(x), f(y)) <_\bY (f(x_0), f(y_0))$, which in turn is equivalent to \[d_Y((f(x), f(y))) < d_Y(f(x_0), f(y_0)) \leq \varepsilon.\qedhere\] 
\end{proof}

\begin{COR}
Let $(X, d_X)$ be a metric space inducing the echelon $\leq_\bX$. Then any automorphism of $(X, \leq_\bX)$ is uniformly continuous.
\end{COR}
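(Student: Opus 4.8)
The plan is to derive this from Proposition~\ref{lem_f_unicont} via a short case distinction on whether the metric space $(X, d_X)$ is uniformly discrete. The point is that an automorphism $f$ of $(X,\leq_\bX)$ is, by Remark~\ref{rem_echelon}, in particular a homomorphism from $(X,\leq_\bX)$ to itself, and being a bijection it is surjective, so $f[X] = X$.

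First I would treat the case where $(X, d_X)$ is \emph{not} uniformly discrete. Then the metric space $(f[X], d_X \restr_{f[X]})$ is literally $(X, d_X)$, hence not uniformly discrete, so Proposition~\ref{lem_f_unicont} applies with $(Y, d_Y) = (X, d_X)$ and yields that $f$ is uniformly continuous. Second, in the complementary case where $(X, d_X)$ is uniformly discrete, say with all nonzero distances bounded below by some $c > 0$, every self-map of $X$ is uniformly continuous for a trivial reason: given $\varepsilon > 0$, choosing $\delta = c$ forces $d_X(x,y) < \delta$ to imply $x = y$, whence $d_X(f(x), f(y)) = 0 < \varepsilon$. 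Combining the two cases gives the statement.

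There is essentially no obstacle here; the only thing worth spelling out is that surjectivity of an automorphism is precisely what makes the hypothesis ``$(f[X], d_Y\restr_{f[X]})$ not uniformly discrete'' of Proposition~\ref{lem_f_unicont} automatic whenever the ambient space is not uniformly discrete, and that the uniformly discrete case — which falls outside the scope of that proposition — is handled directly and trivially.
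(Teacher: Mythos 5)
Your proposal is correct and follows exactly the paper's argument: apply Proposition~\ref{lem_f_unicont} when $(X,d_X)$ is not uniformly discrete (using surjectivity of the automorphism so that $f[X]=X$), and note the uniformly discrete case is trivial. You merely spell out the two steps that the paper's proof leaves implicit.
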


\begin{proof} 
If $(X, d_X)$ is not uniformly discrete, we obtain the claim by Proposition~\ref{lem_f_unicont}. Otherwise, the statement is trivial.
\end{proof}

The corollary above implies that the isometry group of a metric space $(X, d_X)$ is a subgroup of the automorphism group of the echeloned space $(X, \leq_\bX)$, which in turn is a subgroup of the automorphism group of the uniformity space induced by $(X, d_X)$.

The example below shows that homomorphisms between echeloned spaces need not to be uniformly continuous, even if the echeloned spaces in question are induced by metric spaces.
\begin{EX}
Let $X \coloneqq  \{0\} \cup \{\frac{1}{n} \mid n \in \NN^+\}\subseteq \mathbb{R}$ be endowed with the usual metric. Let $Y \coloneqq  \{\delta_0\} \cup \bigl\{\left(1 + \frac{1}{n}\right) \delta_0 + \delta_n \mid n \in \NN^+\bigr\}\subseteq {\ell}^{1}$ be endowed with the ${\ell}^{1}$ metric  (where $\delta_i$ denotes the sequence with all entries $0$, except for the $i$\textsuperscript{th} which is $1$). Then the map $f \colon X \to Y$ defined so as to map $0$ to $\delta_0$ and $\frac{1}{n}$ to $\left(1 + \frac{1}{n}\right) \delta_0 + \delta_n$ is surjective and also preserves the echelon induced by the metric. However, it is not continuous, let alone uniformly continuous.
\end{EX}

Next, we consider a class of metric spaces for which the automorphisms of the induced echelon are exactly the dilations, see Proposition~\ref{similarity}.

\begin{DEF}
Let $(X, d_X)$ be a metric space. A point $z \in X$ is called a \emph{midpoint} of $x$ and $y$, where $x, y \in X$, if 
\[d_X(z, x) = d_X(z, y) = \frac{1}{2}d_X(x, y).\]
The set of all midpoints of points $x, y \in X$ is denoted by $\Mid_X(x, y)$. 
The metric space itself is said to \emph{have midpoints} if for any $x, y \in X$ there exists a $z \in X$ which is their midpoint. 
\end{DEF}

\begin{LEM}\label{lem_fconst_inj}
Let $(X, d_X)$ and $(Y, d_Y)$ be metric spaces inducing echelons $\leq_\bX$ and $\leq_\bY$, respectively.  Let $f \colon (X, \leq_\bX) \to (Y, \leq_\bY)$ be a homomorphism. If $(X, d_X)$ has midpoints, then $f$ is constant or injective.
\end{LEM}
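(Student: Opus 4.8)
The plan is to argue by contraposition on the failure of injectivity. Suppose $f$ is not injective, so there exist distinct points $a, b \in X$ with $f(a) = f(b)$; I must then show $f$ is constant. The key leverage is that $f(a) = f(b)$ means the pair $(a,b)$ gets sent to an identity pair in $Y$, and by the echeloned-space axioms an identity pair is strictly $\leq_\bY$-below every non-identity pair. Since $f$ preserves $\leq_\bX$, any pair $(x,y)$ in $X$ with $(x,y) \leq_\bX (a,b)$ must satisfy $(f(x),f(y)) \leq_\bY (f(a),f(b))$, which forces $(f(x),f(y))$ to be an identity pair as well, i.e.\ $f(x) = f(y)$. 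Translating through the inducing metrics: whenever $d_X(x,y) \le d_X(a,b)$ we get $d_Y(f(x),f(y)) = 0$. So it suffices to show that any two points of $X$ can be ``connected'' within the threshold $d_X(a,b)$.

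The main step is a halving argument using midpoints. Set $\rho \coloneqq d_X(a,b) > 0$. I claim that for all $x, y \in X$ with $d_X(x,y) \le 2\rho$ one already has $f(x) = f(y)$: indeed, let $z$ be a midpoint of $x$ and $y$, so $d_X(x,z) = d_X(z,y) = \tfrac12 d_X(x,y) \le \rho = d_X(a,b)$; hence $f(x) = f(z)$ and $f(z) = f(y)$ by the previous paragraph, giving $f(x) = f(y)$. Iterating, an easy induction shows that $d_X(x,y) \le 2^n \rho$ implies $f(x) = f(y)$ for every $n \in \NN$: at each stage take a midpoint to reduce the distance bound by a factor of two down into the already-handled range. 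Since for any fixed pair $x, y \in X$ the distance $d_X(x,y)$ is a finite real number, it lies below $2^n\rho$ for $n$ large enough, so $f(x) = f(y)$. As $x, y$ were arbitrary, $f$ is constant.

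I should be slightly careful about the base case and the exact inequalities in the induction (whether to phrase the threshold as $d_X(x,y) \le \rho$, $< 2\rho$, or $\le 2\rho$), but these are cosmetic; the midpoint gives $d_X(x,z) = \tfrac12 d_X(x,y)$ exactly, so halving is clean and non-strict bounds propagate without trouble. The genuinely essential ingredients are just two: axiom~(ii) of Definition~\ref{orgDEF_sim} (an identity pair is minimal and nothing non-trivial sits at or below it), used to turn ``$\le_\bY$ an identity pair'' into ``is an identity pair''; and the existence of midpoints, used to halve distances. No obstacle beyond bookkeeping is expected — the one thing to state explicitly is that $\rho > 0$ (because $a \ne b$), which is what makes the powers $2^n\rho$ exhaust $\RR_{\ge 0}$.
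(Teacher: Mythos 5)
Your proposal is correct and follows essentially the same route as the paper's proof: both use the existence of midpoints to halve distances repeatedly until every pair falls at or below the threshold $d_X(x_0,y_0)$, and then apply the homomorphism property together with axiom (ii) of Definition~\ref{orgDEF_sim} to force equal images. The only difference is presentational --- the paper constructs an explicit dyadic chain $x=m_0,\dots,m_{2^k}=y$ of points at mutual distance $d_X(x,y)/2^k\le d_X(x_0,y_0)$, while you package the same halving as an induction on $n$ with threshold $2^n\rho$.
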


\begin{proof}
Assume $f$ is not injective. That means that there exist distinct $x_0, y_0 \in X$ such that $f(x_0) = f(y_0)$. Let $\varepsilon \coloneqq  d_X(x_0,y_0)$. Take any $x, y \in X$ and let $x=m_0,m_1, m_2, \dots, m_{2^k}=y$ be points from $X$ such that $d_X(m_i, m_{i+1}) = \frac{d_X(x, y)}{2^k} \leq \varepsilon=d_X (x_0,y_0)$, for any $i \in \{0, 1, \dots, 2^k-1\}$. The latter points exist due to $(X, d_X)$ having midpoints. Consequently, for all $i \in \{0, 1, \dots, 2^k-1\}$ we have $(m_i, m_{i+1}) \leq_\bX (x_0, y_0)$. Further, since $f$ is a homomorphism, it follows that $(f(m_i), f(m_{i+1})) \leq_\bY (f(x_0), f(y_0)) = (f(x_0), f(x_0))$. Thus, $f(m_i) = f(m_{i+1})$ for all $i \in \{0, 1, \dots, 2^k-1\}$, leading to $f(x) = f(y)$. From the arbitrary choice of $x$ and $y$, we conclude that $f$ is constant.
\end{proof}

\begin{LEM}\label{lem_midpoints}
Let $(X, d_X)$ and $(Y, d_Y)$ be metric spaces inducing echelons $\leq_\bX$ and $\leq_\bY$, respectively.  Let $f \colon (X, \leq_\bX) \to (Y, \leq_\bY)$ be a surjective homomorphism. If $(X, d_X)$ and $(Y, d_Y)$ have midpoints, then for all $x, y \in X$
\[f\left[\Mid_X(x, y)\right] \subseteq \Mid_Y(f(x), f(y)).\]
\end{LEM}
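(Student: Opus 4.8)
The plan is to fix $x,y\in X$ and an arbitrary $z\in\Mid_X(x,y)$ and to show directly that $d_Y(f(z),f(x))=d_Y(f(z),f(y))=\tfrac12 d_Y(f(x),f(y))$, which is exactly the assertion $f(z)\in\Mid_Y(f(x),f(y))$. If $x=y$ the statement is trivial (then $z=x$ and both sides equal $\{f(x)\}$), so assume $x\neq y$ and set $D_X\coloneqq d_X(x,y)>0$ and $D_Y\coloneqq d_Y(f(x),f(y))$. The first step is to observe that the midpoint equations $d_X(z,x)=d_X(z,y)=\tfrac12 D_X$ say precisely that $(z,x)\sim_\bX(z,y)$; since a homomorphism preserves $\leq_\bX$, it also sends $\sim_\bX$-related pairs to $\sim_\bY$-related pairs, so $(f(z),f(x))\sim_\bY(f(z),f(y))$, i.e.\ $d_Y(f(z),f(x))=d_Y(f(z),f(y))$. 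Denote this common value by $r$. One inequality is then immediate: the triangle inequality in $Y$ gives $D_Y\le d_Y(f(x),f(z))+d_Y(f(z),f(y))=2r$, so $r\ge\tfrac12 D_Y$.

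The remaining step is the reverse bound $r\le\tfrac12 D_Y$, and this is where the hypotheses that $f$ is surjective and $(Y,d_Y)$ has midpoints enter. Since $Y$ has midpoints, pick $w\in\Mid_Y(f(x),f(y))$; by surjectivity of $f$ there is $u\in X$ with $f(u)=w$, so $d_Y(f(u),f(x))=d_Y(f(u),f(y))=\tfrac12 D_Y$. Suppose, towards a contradiction, that $r>\tfrac12 D_Y$. Then $(f(u),f(x))<_\bY(f(z),f(x))$ and $(f(u),f(y))<_\bY(f(z),f(y))$. I would now use that $f$ \emph{reflects} strict inequalities: this is the contrapositive of preservation of $\leq_\bX$ together with totality of the preorder $\leq_\bX$, so $(f(a),f(b))<_\bY(f(c),f(d))$ forces $(a,b)<_\bX(c,d)$. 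Applying it twice gives $(u,x)<_\bX(z,x)$ and $(u,y)<_\bX(z,y)$, that is $d_X(u,x)<\tfrac12 D_X$ and $d_X(u,y)<\tfrac12 D_X$. But then the triangle inequality in $X$ yields $D_X=d_X(x,y)\le d_X(x,u)+d_X(u,y)<D_X$, a contradiction. Hence $r=\tfrac12 D_Y$, which together with $r=d_Y(f(z),f(x))=d_Y(f(z),f(y))$ finishes the argument.

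I expect the only genuine content to be the second step; everything else is bookkeeping about how $\sim$ and $<$ transport along homomorphisms. It is probably cleanest to record the "reflects strict inequalities'' observation beforehand (or at least to note that it is immediate from Remark~\ref{rem_echelon} and the totality of echelons), after which the proof reduces to the two triangle-inequality comparisons above. I would also point out that neither Lemma~\ref{lem_fconst_inj} nor the assumption that $(X,d_X)$ has midpoints is actually needed here, beyond ensuring that the set $\Mid_X(x,y)$ over which one quantifies may be nonempty; the argument works for any single $z\in\Mid_X(x,y)$.
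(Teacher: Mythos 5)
Your proof is correct, but it follows a genuinely different route from the paper's. The paper works only with \emph{forward} preservation of $\leq_\bX$: it first splits off the constant case via Lemma~\ref{lem_fconst_inj}, then assumes $f(m)\notin\Mid_Y(f(x),f(y))$, takes $m'\in X$ with $f(m')$ a midpoint of $f(x),f(y)$, and runs a case analysis comparing $d_X(x,m')$ with $d_X(m',y)$ and $d_X(x,m)$ to force $(x,m)\leq_\bX(x,m')$ and hence a contradiction. You instead exploit the observation that any homomorphism of echeloned spaces \emph{reflects} strict comparisons, because $\leq_\bX$ is a total preorder (if $(a,b)<_\bX(c,d)$ failed, totality would give $(c,d)\leq_\bX(a,b)$, which $f$ would preserve, contradicting strictness in $\bY$); applying this twice to the preimage $u$ of a midpoint of $f(x),f(y)$ yields $d_X(u,x),d_X(u,y)<\tfrac12 d_X(x,y)$ and a single triangle inequality in $X$ finishes the job. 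This is shorter, avoids the constant/injective dichotomy entirely, and your closing remark is accurate: neither Lemma~\ref{lem_fconst_inj} nor midpoints in $X$ are used beyond making $\Mid_X(x,y)$ possibly nonempty, and the argument also covers the degenerate case $f(x)=f(y)$. Two small polish points: the phrase that the midpoint equations ``say precisely'' that $(z,x)\sim_\bX(z,y)$ overstates an implication (the relation $\sim_\bX$ does not record the value $\tfrac12 D_X$, which you do use later, legitimately, from $z\in\Mid_X(x,y)$); and the reflection lemma should indeed be stated explicitly before use, exactly as you propose, since the paper's definition of homomorphism (Remark~\ref{rem_echelon}) only postulates preservation.
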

\begin{proof}
For constant $f$ the lemma trivially holds. So, by Lemma~\ref{lem_fconst_inj}, it remains to handle the case that $f$ is injective.  Take any $x, y \in X$ and denote by $m$ a midpoint of $x$ and $y$. Then we have that $d_X(x, m) = d_X(m, y) = \frac{1}{2} d_X(x, y)$. Thus, $(x, m) \sim_\bX (m, y)$, and so $(f(x), f(m)) \sim_\bY (f(m), f(y))$, \ie $d_Y(f(x), f(m)) = d_Y(f(m), f(y))$. Now, from the triangle inequality we know that $d_Y(f(x), f(y)) \leq d_Y(f(x), f(m)) + d_Y(f(m), f(y)) = 2 d_Y(f(x), f(m))$. Hence, $\frac{1}{2} d_Y(f(x), f(y)) \leq d_Y(f(x), f(m))$. 

In what follows, we shall show that $f(m)$ is in fact a midpoint of $f(x)$ and $f(y)$.
Assume the opposite, that $d_Y(f(x), f(m)) > \frac{1}{2} d_Y(f(x), f(y))$. As $f$ is surjective and $(Y, d_Y)$ has midpoints too, there must exist some $m' \in X$ such that $f(m')$ is a midpoint of $f(x)$ and $f(y)$: 

\begin{center}
\begin{tikzpicture}[scale=1,baseline=0pt]
 \GraphInit[vstyle=Classic]
		\tikzset{VertexStyle/.style = {shape = circle, draw,minimum size = 2pt, inner sep = 2pt}}        
		\tikzset{EdgeStyle/.style = {-}}
		\tikzset{>=stealth'}		
   		\Vertex[x=0,y=-0.75,Lpos=-90,L=$f(y)$]{y}
   		\Vertex[x=1.5,y=0,L=$f(m)$.]{z}
   		\Vertex[x=0,y=0.75,Lpos=90,L=$f(x)$]{x}
        \Vertex[x=0,y=0,Lpos=180,L=$f(m')$]{u}
        \Edges(x,u,y,z,x)
         \end{tikzpicture}
         \end{center}
As $f(m) \not= f(m')$, it follows that $m \not= m'$. 
Suppose $d_X(x, m') \not= d_X(m', y)$. Without loss of generality, let $d_X(x, m') < d_X(m', y)$. 
Note that by the triangle inequality it is impossible that $d_X(x,m)\geq d_X(m',y)$. Therefore, there are two possible cases:
\begin{enumerate}[label=\textbf{Case \arabic*:}, ref=Case \arabic*,nosep,align=left,leftmargin=0em,labelindent=0em,itemindent=1em,labelsep=0.5em,labelwidth=!]
    \item $d_X(x, m) \leq d_X(x, m') < d_X(m', y)$ --- Then,  
$(f(x), f(m)) \leq_\bY (f(x), f(m')) \leq_\bY (f(m'), f(y))$. However, this leads to \[d_Y(f(x), f(m)) \leq d_Y(f(x), f(m')) = \frac{1}{2} d_Y(f(x), f(y)),\] which in turn contradicts our assumption $f(m)\not\in \Mid_Y(f(x), f(y))$;
    \item $d_X(x, m') \leq d_X(x, m) < d_X(m', y)$ --- Then  
    $(f(x), f(m')) \leq_\bY (f(x), f(m)) \leq_\bY (f(m'), f(y))$. In other words, $d_Y(f(x), f(m')) \leq d_Y(f(x), f(m)) \leq d_Y(f(m'), f(y))$. Nonetheless, this implies that $d_Y(f(x), f(m)) = d_Y(f(x), f(m')) = \frac{1}{2}d_Y(f(x), f(y))$ which is again a contradiction to our assumption that $f(m)\not\in \Mid_Y(f(x), f(y))$.
\end{enumerate}
All in all, we have just shown that $d_X(x, m') = d_X(m', y)$, \ie $(x, m') \sim_\bX (m', y)$. Therefore, $d_X(x, y) \leq d_X(x, m') + d_X(m', y) = 2 d_X(x, m')$, and so $d_X(x, m) = \frac{1}{2} d_X(x, y) \leq d_X(x, m')$, \ie $(x, m) \leq_\bX (x, m')$.  As a result $(f(x), f(m)) \leq_\bY (f(x), f(m'))$, \ie $d_Y(f(x), f(m)) \leq d_Y(f(x), f(m')) =\frac{1}{2} d_Y(f(x), f(y))$, thus we reach a contradiction. 

So our initial assumption was wrong and $f(m)$ is indeed a midpoint of $f(x)$ and $f(y)$, as desired.
\end{proof}

\begin{REM}\label{rem_collinear}
    Let $(X,d_X)$ be a metric space. Recall that a triple $(a,b,c)\in X^3$ is said to be \emph{collinear} if 
    \[
    d_X(a,b)+d_X(b,c) = d_X(a,c).
    \]
    Furthermore it is well-known and easy to see that for any four points $a,b,c,d\in X$ joint collinearity of $(a,b,d)$ and $(b,c,d)$ implies the collinearity of $(a,b,c)$ and of $(a,c,d)$ (see \cite[Section 2]{Men28}, cf.\ \cite[Section 6]{Chv04}). 
\end{REM}

\begin{PROP}\label{similarity}
Let $f$ be an automorphism of the echeloned space $(X, \leq_\bX)$, induced by a metric space $(X, d_X)$ that has midpoints. Then $f$ is a dilation, \ie there exists a positive real number $t$ such that for all $x, y \in X$ we have that $d_X(f(x), f(y)) = t \cdot d_X(x, y)$. 
\end{PROP}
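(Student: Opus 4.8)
The plan is to reduce everything to understanding the order-automorphism that $f$ induces on the set of realised distances. Write $d \coloneqq d_X$ and $D \coloneqq d[X^2] \subseteq \RR_{\geq 0}$. Since $(X,d)$ induces $\leq_\bX$, two pairs are $\sim_\bX$-equivalent exactly when they are equidistant, so $E(\bX) \cong (D,\leq)$ via $[(x,y)]_{\sim_\bX}\mapsto d(x,y)$. As $f$ is an automorphism of $\bX$, Corollary~\ref{cor_sim_EMB} tells us that $\hat f$ is an automorphism of the chain $E(\bX)$ (as already used in Proposition~\ref{prop_hatf_id}); transporting it to $(D,\leq)$ yields an order-automorphism $\phi$ with $\phi(d(x,y)) = d(f(x),f(y))$ for all $x,y$, fixing the least element $0$. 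The proposition then amounts to showing $\phi(s) = t\,s$ for all $s\in D$ with a single constant $t>0$ (necessarily $t = \phi(s_0)/s_0$ for any $s_0\in D\setminus\{0\}$, which exists once $|X|\geq 2$; the case $|X|=1$ is trivial).

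First I would establish that $\phi$ commutes with halving: $\phi(s/2) = \phi(s)/2$ for every $s\in D$. Indeed, picking $x,y$ with $d(x,y)=s$ and a midpoint $m\in\Mid_X(x,y)$ (available since $(X,d)$ has midpoints), Lemma~\ref{lem_midpoints} applied to the surjective homomorphism $f$ gives $f(m)\in\Mid_X(f(x),f(y))$, so $\phi(s/2)=d(f(x),f(m))=\tfrac12 d(f(x),f(y))=\tfrac12\phi(s)$. This alone is far from sufficient — halving-invariance is shared by many non-linear increasing bijections — so the substantial step is to upgrade it to rational scaling. To that end, for fixed $x,y$ with $d(x,y)=s$ I would iterate midpoint-taking to build points $v^{(k)}_j$ ($0\leq j\leq 2^k$) with $v^{(k)}_0=x$, $v^{(k)}_{2^k}=y$, $v^{(k+1)}_{2i}=v^{(k)}_i$, $v^{(k+1)}_{2i+1}\in\Mid_X(v^{(k)}_i,v^{(k)}_{i+1})$, and all level-$k$ consecutive distances equal to $s/2^k$; since these consecutive distances sum to $d(x,y)$, the iterated triangle inequality pins down $d(v^{(k)}_i,v^{(k)}_j)=|i-j|\,s/2^k$. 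Applying $f$ and using once more that $f$ preserves midpoints (Lemma~\ref{lem_midpoints}) together with the halving identity, the images $f(v^{(k)}_j)$ again have all consecutive distances equal (to $\phi(s)/2^k$) and summing to $d(f(x),f(y))=\phi(s)$, so again $d(f(v^{(k)}_i),f(v^{(k)}_j))=|i-j|\,\phi(s)/2^k$. Comparing the rows $j=0$ gives $\phi(j\,s/2^k)=j\,\phi(s)/2^k$, i.e. $\phi(qs)=q\,\phi(s)$ for every dyadic rational $q\in[0,1]$ and every $s\in D$.

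To finish, I would remove the dyadic restriction by a squeeze. For $s_1\leq s_2$ in $D$ with $s_2>0$, put $q=s_1/s_2\in(0,1]$ and take dyadic rationals $q_n\uparrow q$, $q_n'\downarrow q$ in $[0,1]$; then $q_n s_2\leq s_1\leq q_n' s_2$, all three lie in $D$, and monotonicity of $\phi$ together with the previous step gives $q_n\,\phi(s_2)\leq\phi(s_1)\leq q_n'\,\phi(s_2)$, whence $\phi(s_1)=q\,\phi(s_2)$ on letting $n\to\infty$. Thus $\phi(s_1)/s_1=\phi(s_2)/s_2$; comparing an arbitrary $s\in D\setminus\{0\}$ with a fixed $s_0$ (via $\max(s,s_0)$) shows $\phi(s)/s$ is a constant $t>0$. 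Then $d_X(f(x),f(y))=\phi(d_X(x,y))=t\,d_X(x,y)$ for all $x,y\in X$, so $f$ is a dilation.

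I expect the midpoint-subdivision step — the passage from halving-invariance to dyadic scaling — to be the crux, and two points need care there. First, $f$ need not preserve collinearity of arbitrary triples (in spaces such as $\ell^1$ metric segments are highly non-unique), so one cannot reason with general betweenness; instead one must work with the explicitly built chains $v^{(k)}_j$, whose collinearity is forced by the construction through the equality case of the triangle inequality and which are controlled under $f$ only because Lemma~\ref{lem_midpoints} propagates through each individual midpoint. Second, $D$ need not consist of dyadic multiples of a single realised distance, so the dyadic scaling identity does not by itself yield linearity — that is precisely what the concluding monotone squeeze (comparing a given distance to a larger realised one and invoking density of the dyadics) is for. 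Everything else is routine bookkeeping with the triangle inequality and with the order-automorphism $\phi$.
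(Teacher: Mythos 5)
Your proof is correct: the reduction to the induced order-automorphism $\phi$ on $\im d_X$ is exactly the paper's $\tilde f$, Lemma~\ref{lem_midpoints} applies since an automorphism is a surjective endomorphism of a space with midpoints, the equality case of the triangle inequality does pin down all distances $d(v^{(k)}_i,v^{(k)}_j)=|i-j|s/2^k$ in your subdivided chains (and likewise for the images), and the final squeeze is sound because every dyadic multiple $qs$ with $q\in[0,1]$ is again a realised distance, so monotonicity of $\phi$ can be applied to it. The overall skeleton (order map on distances, midpoint preservation, monotonicity plus a limiting argument) is the same as the paper's, but the internal organization differs: the paper fixes a target $s\in[0,\delta]\cap\im d_X$ and runs an adaptive bisection inside the space, producing a sequence $(b_n,c_n)$ of ``homothetic'' pairs with $(a,b_n,c_n)$ collinear and $d_X(a,b_n),d_X(a,c_n)\to s$, tracking collinearity via Remark~\ref{rem_collinear} and concluding $\tilde f(s)=ts$ by squeezing between realised distances converging to $s$ from both sides; you instead perform a uniform $2^k$-fold subdivision of one segment, extract the clean global identity $\phi(qs)=q\,\phi(s)$ for all dyadic $q\in[0,1]$ and all $s\in\im d_X$, and then squeeze in $\RR$ using density of the dyadics in $[0,1]$. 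Your route buys a reusable homogeneity statement and avoids the case analysis of the recursive search and the explicit homothety bookkeeping (the collinearity information is recovered wholesale from the equality case of the triangle inequality along the chain), at the mild cost of having to note that the approximating distances $q_n s_2$, $q_n' s_2$ lie in $\im d_X$; the paper's bisection only ever evaluates $\tilde f$ at distances generated by the search itself, so it never needs that remark. Both arguments stand or fall with Lemma~\ref{lem_midpoints} and the monotonicity of the induced map, so the two proofs are of essentially equal strength and length.
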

\begin{proof}
  Let $\hat{f}$ be the action of $f$ on the echeloning of $(X, \leq_\bX)$ (cf.~Corollary~\ref{cor_sim_EMB}). Define $\tilde{d}_X \colon E(X)\to \im d_X,\, [(x,y)]_{\sim_{\bX}}\mapsto d_X(x,y)$. Since $\leq_\bX$ is induced by $d_X$, it is easy to see that $\tilde{d}_X$ is an order isomorphism (cf.~Lemma~\ref{MSinduceS}). Let $\tilde{f}\coloneqq \tilde{d}_X\circ \hat{f}\circ \tilde{d}_X^{-1}$. In particular, for all $a,b\in X$ we have $\tilde{f}(d_X (a,b))=d_X (f(a),f(b))$, and the following diagram commutes: 
\[\begin{tikzcd}
X^2 \arrow[r, twoheadrightarrow, "\eta_\bX"]\arrow[d, rightarrow, hook, "f^2"{name=L, left}] \arrow[rr, rightarrow, bend  left, "d_X"] & E(\bX)\arrow[d, rightarrow, hook, "\hat{f}"] \arrow[r, rightarrow, dashed,"\tilde{d}_X","\cong"'] & \im d_X\arrow[d, rightarrow, hook, "\tilde{f}"]\\
X^2 \arrow[r, twoheadrightarrow, "\eta_\bX"]\arrow[rr, rightarrow, bend  right, "d_X"]& E(\bX)\arrow[r, rightarrow, dashed,"\tilde{d}_X","\cong"'] & \im d_X.
\end{tikzcd}
\]

We will show that for every $\delta \in \im d_X\setminus \{0\}$ the restriction of $\tilde{f}$ to the initial segment $I(\delta) \coloneqq  [0, \delta] \cap \im d_X$ is linear. Since, for any $0 < \delta < \delta' \in \im d_X$, we have $\{0\} \neq I(\delta) \subseteq I(\delta')$, it will follow that $\tilde{f}$ as a whole is linear, \ie $f$ is a dilation.

Let $t \coloneqq  \tilde{f}(\delta) / \delta$. Let $a, c \in X$ with $ d_X(a, c) = \delta$, and therefore with $t = \tfrac{ d_X (f(a), f(c))}{ d_X (a, c)}$. Let $b$ be a midpoint of $a$ and $c$. 

Let $s\in I(\delta)$. Since $\tilde{f}(0)=0=t\cdot 0$, we may and will assume that $s\neq 0$. We choose recursively a sequence $(b_n,c_n)_{n\in\NN}$ in $X^2$ with
\[
\forall n\in\NN\,:\, s\le d_X(a,c_n), \text{ and } (a,b_n,c_n) \text{ is collinear.}
\]
We proceed as follows: $(b_0,c_0)\coloneqq (b,c)$. If $(b_n,c_n)$ has been chosen, then $(b_{n+1},c_{n+1})$ is chosen according to the following cases:
\begin{enumerate}[label=\textbf{Case \arabic*:}, ref=\arabic*,nosep,align=left,leftmargin=0em,labelindent=0em,itemindent=1em,labelsep=0.5em,labelwidth=!]
    \item if $s=d_X(a,c_n)$, then put $(b_{n+1},c_{n+1})\coloneqq (c_n,c_n)$,
    \item if $d_X(a,b_n)<s<d_X(a,c_n)$ then choose $m\in\Mid(b_n,c_n)$ and put
    \[
    (b_{n+1},c_{n+1})\coloneqq \begin{cases}
        (m,m) & \text{if } s=d_X(a,m),\\
        (b_n,m) & \text{if } s<d_X(a,m),\\
        (m,c_n) & \text{if } s>d_X(a,m),
    \end{cases}
    \]
    \item if $d_X(a,b_n)=s$ then put $(b_{n+1},c_{n+1})\coloneqq (b_n,b_n)$,
    \item if $0< s < d_X(a,b_n)$ then choose $m\in\Mid(a,b_n)$ and put $(b_{n+1},c_{n+1})\coloneqq (m,b_n)$.
\end{enumerate}
Let us call a pair $(x,y)\in X^2$ \emph{homothetic} if 
\begin{enumerate}[label=\arabic*), ref=\arabic*)]
    \item $(a,x,y)$ is collinear,
    \item $(f(a),f(x),f(y))$ is collinear,
    \item $d_X(f(a),f(x))=t\cdot d_X(a,x),\quad d_X(f(x),f(y)) = t\cdot d_X(x,y)$.
\end{enumerate}
A straight forward induction using Remark~\ref{rem_collinear} and Lemma~\ref{lem_midpoints} shows that for every $n\in\NN$ the pair $(b_n,c_n)$ is homothetic. Furthermore
\[
s = \lim_{n\to\infty} d_X(a,b_n) = \lim_{n\to\infty} d_X(a,c_n).
\]
Note that since $s>0$, for all but finitely many $n$ we have $d_X(a,b_n)\le s$. 

Since $\tilde{f}(d_X(a,b_n)) = t\cdot d_X(a,b_n)$ and $\tilde{f}(d_X(a,c_n)) = t\cdot d_X(a,c_n)$ for all $n\in\NN$, monotonicity of $\tilde{f}$ implies that $\tilde{f}(s)=t\cdot s$, as desired. 
\end{proof}

What follows is an example of a metric space  and an automorphism of the induced echeloned space which is not a dilation.

\begin{EX}
We define a subspace $X$ of the Euclidean line $\RR$ as follows. Let $a > 2$ and $0 < \epsilon < \tfrac{a}{2} - 1$. Let $x_0 = 1 + \epsilon$ and, for any $k \in \ZZ \setminus \{0\}$, $x_k = a^k$. Let finally $X = \{x_k\mid k \in\ZZ\}$ be endowed with the usual Euclidean metric. Observe that $X$ does not have midpoints since it is discrete.

We claim that the shift $\sigma\colon X \rightarrow X,\, x_k \mapsto x_{k + 1}$ is an automorphism of the echelon relation $\leq_\bX$ of $X$. Obviously, $\sigma$ is not a dilation, since $\epsilon \neq 0$.

To check that it is indeed an automorphism, let us consider two distinct pairs of distinct points $\{x_k, x_\ell\}$ and $\{x_{k'}, x_{\ell'}\}$. We may assume without loss of generality that $k > \ell$ and $k' > \ell'$, and moreover $k \geq k'$. Two cases are to be considered:
\begin{enumerate}[label=(\roman*), ref=(\roman*)]
    \item If $k = k'$, then $(x_{k'}, x_{\ell'}) \leq_\bX (x_k, x_\ell)$ if and only if $\ell \leq \ell'$. This is obviously equivalent to $k + 1 = k' + 1$ and $\ell + 1 \leq \ell' + 1$, \ie to $(\sigma(x_{k'}), \sigma(x_{\ell'})) \leq_\bX (\sigma(x_k), \sigma(x_\ell))$.
    \item If $k > k'$, then necessarily $(x_{k'}, x_{\ell'}) < (x_k, x_\ell)$. Indeed, we have
\begin{align*}
     d_X (x_k, x_\ell)  &\geq  d_X (x_k, x_{k - 1}) = \begin{cases} a^{k - 1} (a - 1) & \text{if } k \neq 1, \\ a - 1 - \epsilon & \text{if } k = 1, \end{cases} \\ 
     d_X (x_{k'}, x_{\ell'})  &\leq x_{k'} \leq x_{k - 1} = \begin{cases} a^{k - 1} & \text{if } k \neq 1, \\ 1 + \epsilon & \text{if } k = 1. \end{cases} 
\end{align*}
And therefore $ d_X (x_k, x_\ell) >  d_X (x_{k'}, x_{\ell'})$ whatever $k$, thanks to our choice of $\epsilon$ and $a$. Since obviously $k + 1 > k' + 1$, the same argument would hold for the images by the shift, \ie $(\sigma(x_{k'}), \sigma(x_{\ell'})) <_\bX (\sigma(x_k), \sigma(x_\ell))$.
\end{enumerate}
\end{EX}

\section{The \Fraisse~limit of the class of finite echeloned spaces}\label{sec_Fraisse}

We now turn our attention to the proof of the existence of a countable universal homogeneous echeloned space.

\begin{PROP}\label{prop_SIMhasAP}
The class of finite echeloned spaces has the amalgamation property.
\end{PROP}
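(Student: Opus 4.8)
The plan is to reduce the statement to the (elementary) amalgamation of finite linear orders by means of Corollary~\ref{cor_sim_EMB}. Let embeddings $f\colon\bA\embedsto\bB$ and $g\colon\bA\embedsto\bC$ of finite echeloned spaces be given. Passing to isomorphic copies, we may assume that $A\subseteq B$, $A\subseteq C$, that $f$ and $g$ are the inclusion maps, and that $B\cap C=A$; put $D\coloneqq B\cup C$. By Corollary~\ref{cor_sim_EMB}, $f$ and $g$ induce order embeddings $\hat f\colon E(\bA)\embedsto E(\bB)$ and $\hat g\colon E(\bA)\embedsto E(\bC)$, and each of them sends the least element of $E(\bA)$ --- the common class of all identity pairs --- to the least element (again the class of identity pairs).

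Since finite chains have the amalgamation property (one interleaves the new elements of $E(\bB)$ and $E(\bC)$ within the intervals determined by the image of $E(\bA)$, putting, say, all $E(\bB)$-elements of an interval before all $E(\bC)$-elements), there is a finite linear order $L$ with order embeddings $\iota_\bB\colon E(\bB)\embedsto L$ and $\iota_\bC\colon E(\bC)\embedsto L$ satisfying $\iota_\bB\circ\hat f=\iota_\bC\circ\hat g$; moreover the construction can be arranged so that the element $\bot\coloneqq\iota_\bB(\hat f(\min E(\bA)))$ is the least element of $L$, because neither $E(\bB)$ nor $E(\bC)$ has anything below $\min E(\bA)$ (that being their respective minimum). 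Write $\top$ for the greatest element of $L$.

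Define $\eta\colon D^2\to L$ by $\eta(x,y)\coloneqq\iota_\bB(\eta_\bB(x,y))$ for $(x,y)\in B^2$, by $\eta(x,y)\coloneqq\iota_\bC(\eta_\bC(x,y))$ for $(x,y)\in C^2$, and by $\eta(x,y)\coloneqq\top$ for the remaining \emph{crossing} pairs, \ie those with one coordinate in $B\setminus A$ and the other in $C\setminus A$. This is well defined: on the overlap $A^2$ of the first two clauses the commuting squares of Corollary~\ref{cor_sim_EMB} yield $\iota_\bB\circ\eta_\bB=\iota_\bB\circ\hat f\circ\eta_\bA=\iota_\bC\circ\hat g\circ\eta_\bA=\iota_\bC\circ\eta_\bC$, and a crossing pair lies in neither $B^2$ nor $C^2$. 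Now declare $(x,y)\leq_\bD(x',y')$ iff $\eta(x,y)\leq_L\eta(x',y')$. Being the pullback of a linear order, $\leq_\bD$ is a total preorder on $D^2$, so it remains to verify the three echelon axioms. Every identity pair has $\eta(x,x)=\bot=\min L$, which gives (i). If $(y,z)\leq_\bD(x,x)$ then $\eta(y,z)=\bot$; a crossing pair would have $\eta(y,z)=\top$, and $\top\neq\bot$ whenever a crossing pair exists (then $B\setminus A\neq\emptyset$, so $|B|\ge 2$ and hence $|L|\ge|E(\bB)|\ge 2$), so $(y,z)$ lies in $B^2$ or in $C^2$ and axiom (ii) for $\bB$, resp.\ $\bC$, forces $y=z$; this is (ii). Lastly $\eta(x,y)=\eta(y,x)$ in all three cases --- for pairs inside $B$ (resp.\ $C$) because axiom (iii) there forces $\eta_\bB(x,y)=\eta_\bB(y,x)$ (resp.\ $\eta_\bC$), and by symmetry of the crossing clause --- so (iii) holds.

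Finally, for $(x,y),(x',y')\in B^2$ we have $(x,y)\leq_\bD(x',y')$ iff $\iota_\bB(\eta_\bB(x,y))\leq_L\iota_\bB(\eta_\bB(x',y'))$ iff $(x,y)\leq_\bB(x',y')$, because $\iota_\bB$ is an order embedding; hence the inclusion $\bB\embedsto\bD$ is an embedding of echeloned spaces, and symmetrically $\bC\embedsto\bD$. These two embeddings agree on $\bA$, and $\bD=(D,\leq_\bD)$ is a finite non-empty echeloned space, so it amalgamates $\bB$ and $\bC$ over $\bA$. The only genuine obstacle here is conceptual: by Corollary~\ref{cor_sim_EMB} the sole compatibility that has to be engineered between $\bB$ and $\bC$ is that their echelonings align over $E(\bA)$, which is just amalgamation of finite linear orders; once $L$ is fixed, the values on the crossing pairs are essentially free, and the top of $L$ is the convenient choice.
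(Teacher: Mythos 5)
Your proof is correct and follows essentially the same route as the paper: amalgamate the echelonings $E(\bB)$ and $E(\bC)$ over $E(\bA)$ as finite linear orders (keeping the identity class at the bottom), send crossing pairs to the top element, pull back the order, and check that only axiom (ii) needs care. The only cosmetic difference is that the paper assumes $\min\neq\max$ of the amalgamated chain outright, while you verify $\bot\neq\top$ whenever a crossing pair exists; both handle the degenerate case adequately.
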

\begin{proof}
Consider any three finite echeloned spaces $\bA, \bB_1, \bB_2$ together with embeddings $f_1 \colon \bA \embedsto \bB_1$ and $f_2 \colon \bA \embedsto \bB_2$. 
By Corollary~\ref{cor_sim_EMB}, we obtain embeddings $\hat{f}_1 \colon E(\bA) \embedsto E(\bB_1)$ and $\hat{f}_2 \colon E(\bA) \embedsto E(\bB_2)$, for which the following equations hold: \[\hat{f_1} \circ \eta_\bA = \eta_{\bB_1} \circ {f_1}^2 \quad \text{and} \quad \hat{f_2} \circ \eta_\bA = \eta_{\bB_2} \circ {f_2}^2.\]
Notice that $E(\bA)$, $E(\bB_1)$ and $E(\bB_2)$ are all (finite) linear orders.  Given that the class of finite linear orders has the amalgamation property, there exist a linear order $(D, \sqsubseteq)$ and embeddings 
\[{g}_1 \colon E(\bB_1) \embedsto (D, \sqsubseteq) \quad \text{and} \quad {g}_2 \colon E(\bB_2) \embedsto (D, \sqsubseteq)\]
for which ${g}_1 \circ \hat{f}_1 = {g}_2 \circ \hat{f}_2$, \ie for which this diagram commutes:
\[\begin{tikzcd}
& E(\bB_1) \arrow[dr, dashed, hook, "{g}_1"] & \\
E(\bA)\arrow[ur, hook, "\hat{f}_1"] \arrow[dr, hook, "\hat{f}_2"']& &(D, \sqsubseteq). \\
& E(\bB_2)\arrow[ur, dashed, hook, "{g}_2"'] &
\end{tikzcd}
\]
For each $i\in\{1,2\}$ observe that $\min  E(\bB_i)=\hat{f}_i (\min E(\bA))$ and therefore, without loss of generality, we can assign  $g_i(\min  E(\bB_i))=\min (D, \sqsubseteq)$. Moreover, we may assume that $\min (D, \sqsubseteq)\neq \max (D, \sqsubseteq)$. Now, define $C \coloneqq  B_1 \dotcup (B_2 \setminus f_2[A])$ and $\eta \colon C^2 \to (D, \sqsubseteq)$ as:
\[
\eta(x, y) \coloneqq  \begin{cases}
            {g}_1 (\eta_{\bB_1} (x, y)) & \text{if } (x, y) \in B^2_1, \\
            {g}_2 (\eta_{\bB_2} (x, y)) & \text{if } (x, y) \in B^2_2, \\
            \max (D, \sqsubseteq) & \text{otherwise}.
           \end{cases}
\]
Observe that $\eta$ is well-defined. Now, we define a binary relation $\leq_\bC$ on $C^2$ as follows:
\[(c_1, c_2) \leq_{\bC} (c_1', c_2') \quad :\Longleftrightarrow \quad \eta(c_1, c_2) \sqsubseteq \eta(c_1', c_2').\]

What we need to show now is that $\bC \coloneqq  (C, \leq_\bC)$ is indeed an echeloned space.  The only nontrivial point to show is axiom (ii) from Definition~\ref{orgDEF_sim}. Take any $c_0, c_1, c_2 \in C$. 
Let us assume that  $(c_1, c_2) \leq_{\bC} (c_0, c_0)$, then $\eta(c_1, c_2) \sqsubseteq \eta(c_0, c_0) = \min (D, \sqsubseteq)$, and so $\eta(c_1, c_2) = \min (D, \sqsubseteq)$. In other words, $(c_1, c_2) \in \bB_i^2$ for some $i \in \{1, 2\}$. 
Therefore axiom (ii) for $\bB_1$ or $\bB_2$ implies $c_1=c_2$.
Thus, $\bC$ is a well-defined finite echeloned space.

It remains to show that the dashed arrows in the following commuting diagram are embeddings:
\[\begin{tikzcd}
&\bB_1 \arrow[dr, dashed, "="]& \\
\bA\arrow[ur, hook, "f_1"] \arrow[dr, hook, "f_2"']& & \bC.\\ 
&\bB_2\arrow[ur, dashed, "="']&
\end{tikzcd}
\]
Indeed, for any choice of $b_1, b_2, b_1', b_2' \in \bB_i$, for $i \in \{1, 2\}$, $(b_1, b_2) \leq_{\bB_i} (b_1', b_2')$ is equivalent to $\eta_{\bB_i} (b_1, b_2)  \leq_{E(\bB_i)} \eta_{\bB_i} (b_1', b_2')$ which in turn is equivalent to $\eta(b_1, b_2)  \sqsubseteq \eta(b_1', b_2')$, \ie to $(b_1, b_2) \leq_\bC (b_1', b_2')$. This concludes the proof.
\end{proof}

\begin{REM}\label{echelonhasSAP}
    The proof of Proposition~\ref{prop_SIMhasAP} actually shows that the class of finite echeloned spaces has the strong amalgamation property.
\end{REM}

\begin{PROP}\label{thm_SIMisFraisse}
The class of finite echeloned spaces is a \Fraisse~class.
\end{PROP}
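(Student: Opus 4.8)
The plan is to verify the remaining defining properties of a \Fraisse class, since the amalgamation property has already been established in Proposition~\ref{prop_SIMhasAP}. Concretely, I would check that the class of finite echeloned spaces is closed under isomorphism, contains only countably many isomorphism types, and has both the hereditary property and the joint embedding property; together with \AP these are exactly the hypotheses of \Fraisse's theorem.

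For closure under isomorphism and countability of isomorphism types I would simply invoke Remark~\ref{rem_echelon}: echeloned spaces may be viewed as relational structures over the finite signature $\{\lesssim\}$, so up to isomorphism there are only countably many finite ones. For the hereditary property, given a finite echeloned space $\bX=(X,\leq_\bX)$ and a non-empty subset $Y\subseteq X$, I would equip $Y$ with the restriction $\leq_\bX \restr_{Y^2}$ of the echelon to $Y^2$. The restriction of a total preorder to a subset is again a total preorder, so $(Y^2,\leq_\bX\restr_{Y^2})$ is a prechain, and axioms (i)--(iii) of Definition~\ref{orgDEF_sim} pass from $\bX$ to $\bY$ because they are universally quantified over points. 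Hence $\bY$ is again a finite echeloned space and the inclusion $Y\hookrightarrow X$ is an embedding.

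For the joint embedding property I would combine \AP with the observation that the (up to isomorphism unique) one-point echeloned space $\mathbf{1}$ embeds into every echeloned space, an embedding being obtained by sending the single point to any chosen point; this is immediate since a singleton carries a unique echelon and embeddings of echeloned spaces only constrain the behaviour on pairs. Thus, given finite echeloned spaces $\bB_1,\bB_2$, amalgamating them over $\mathbf{1}$ along such embeddings (Proposition~\ref{prop_SIMhasAP}) yields a finite echeloned space into which both $\bB_1$ and $\bB_2$ embed. With closure under isomorphism, countably many isomorphism types, \HP, \JEP, and \AP all in place, \Fraisse's theorem applies.

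I do not expect a real obstacle: the statement is essentially a bookkeeping corollary of Proposition~\ref{prop_SIMhasAP}. The only point that deserves a word of care is that echeloned spaces are non-empty by definition, so the hereditary property must be read as closure under \emph{non-empty} substructures, and the one-point space (rather than the empty structure) plays the role of the common base needed to derive \JEP from \AP.
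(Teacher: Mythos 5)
Your proposal is correct and matches the paper's own argument essentially verbatim: amalgamation is quoted from Proposition~\ref{prop_SIMhasAP}, the hereditary property comes from restricting the echelon to subsets, joint embedding is derived from \AP via the one-point echeloned space (noting non-emptiness), and countably many isomorphism types follow from the finite relational signature. No gaps.
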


\begin{proof}
What we need to show is that the class of finite echeloned spaces has the hereditary property ($\HP$), the joint embedding property ($\JEP$), the amalgamation property ($\AP$), and that up to isomorphism there exist just countably many finite echeloned spaces. 

The $\AP$ was already established in Proposition~\ref{prop_SIMhasAP}. With regards to establishing the $\JEP$: observe that, by definition, an echeloned space is defined on a non-empty set. Note that the trivial one element echeloned space is embeddable into any echeloned space. Therefore, in this case, the $\JEP$ follows from the $\AP$.
Any subset of an echeloned space induces an echeloned space. Thus, this class has the $\HP$. Having been defined over a finite relational signature, it clearly has only countably many isomorphism classes. 
\end{proof}

Proposition~\ref{thm_SIMisFraisse}, together with \Fraisse's theorem, asserts the existence of a unique (up to isomorphism) countable universal homogeneous echeloned space. We will denote this \Fraisse limit by $\bF = (F, \leq_\bF)$ and the smallest element of its echeloning by $\bot_\bF$.

We proceed by examining the structure of $\bF$ in more detail. 

\begin{LEM}\label{distQ}
$E(\bF) \cong (\QQ^+_0, \leq)$.
\end{LEM}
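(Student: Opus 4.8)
The plan is to identify $E(\bF)$ as the \Fraisse limit of the class of finite linear orders with a least element, and then recognise that limit concretely as $(\QQ_0^+,\leq)$. First I would observe that by Corollary~\ref{cor_sim_EMB}, the functor $E$ sends embeddings of echeloned spaces to embeddings of their echelonings, so every finite chain with a least element that arises as $E(\bX)$ for some finite echeloned space $\bX$ embeds — as an ordered set fixing the bottom — into $E(\bF)$; conversely every finite initial segment of $E(\bF)$ is itself of the form $E(\bX)$ for some finite substructure $\bX\le\bF$. The key point is that \emph{every} finite linear order with a distinguished least element occurs as $E(\bX)$ for a finite echeloned space $\bX$: given a chain $0=d_0<d_1<\dots<d_n$, realise it by any finite metric space whose set of distances is exactly $\{d_0,\dots,d_n\}$ (for instance take $n+1$ points on a line, or just pick generic distances) and apply Lemma~\ref{MSinduceS}; the induced echelon has echeloning order-isomorphic to the given chain via an isomorphism sending $\bot$ to the identity-pair class.

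Next I would package this into a back-and-forth / extension argument showing $E(\bF)$ is the (unique countable) universal homogeneous structure in the class of finite linear orders with least element. Universality: any countable linear order with least element embeds into $E(\bF)$ fixing the bottom, since $\bF$ is universal for countable echeloned spaces and $E$ is functorial on embeddings. Homogeneity: given an isomorphism $\phi$ between two finite initial-segment-closed (or merely finite, fixing $\bot$) sub-chains $P,P'$ of $E(\bF)$, lift $P$ and $P'$ to finite substructures $\bX,\bX'\le\bF$ with $E(\bX)\cong P$, $E(\bX')\cong P'$ via the canonical quotient maps; by Corollary~\ref{cor_sim_EMB} the order-isomorphism $\phi$ lifts to an isomorphism $\bX\to\bX'$ (one needs that an order-isomorphism on the echeloning fixing $\bot$ is realised by an isomorphism of echeloned spaces with the same underlying point-set, which again is Corollary~\ref{cor_sim_EMB} applied in both directions), and then homogeneity of $\bF$ extends this to an automorphism $g$ of $\bF$; finally $\hat g\in\Aut(E(\bF))$ extends $\phi$. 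Since $(\QQ_0^+,\leq)$ is easily checked to be a countable homogeneous linear order with least element into which every countable such order embeds — equivalently, $(\QQ^+,\le)$ is the \Fraisse limit of finite chains, and adjoining a least element is harmless — uniqueness of \Fraisse limits gives $E(\bF)\cong(\QQ_0^+,\le)$.

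The main obstacle I anticipate is the bookkeeping around the distinguished least element $\bot_\bF$: finite echeloned spaces always contribute the identity-pair class as the minimum of their echeloning, so the relevant amalgamation class is not ``finite linear orders'' but ``finite linear orders with a named constant for the bottom'', and one must be careful that all lifts and the \Fraisse-theoretic arguments respect this constant (this is exactly the point already flagged in the amalgamation proof, Proposition~\ref{prop_SIMhasAP}, where $g_i(\min E(\bB_i))=\min(D,\sqsubseteq)$ is forced). Once that is handled, the only remaining routine check is that the assignment chain $\mapsto$ metric space $\mapsto$ echeloned space indeed produces the prescribed echeloning, which follows directly from Lemmas~\ref{MSinduceS} and is essentially tautological. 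An alternative, more direct route — avoiding \Fraisse-limit uniqueness altogether — would be to build an explicit order-isomorphism $E(\bF)\to(\QQ_0^+,\le)$ by a back-and-forth enumeration, using at each step that a new distance value can be inserted into any prescribed gap of $E(\bF)$ (realise the enlarged finite chain as an echeloned space and amalgamate, invoking homogeneity of $\bF$); I would likely present the \Fraisse-limit version as cleaner, but mention the direct construction since it also makes transparent why $E(\bF)$ is \emph{dense without endpoints above $\bot$}, which is the substantive content.
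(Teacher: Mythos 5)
Your overall strategy (view $E(\bF)$ with the named bottom $\bot_\bF$ as the \Fraisse limit of finite chains with a least element) could in principle work, but the homogeneity step contains a genuine gap. Corollary~\ref{cor_sim_EMB} only says that a \emph{given} point map $h$ is an embedding if and only if it induces an order embedding $\hat h$ making the square commute; it does not produce a point map from a map of echelonings, so it cannot be ``applied in both directions'' to lift your order-isomorphism $\phi\colon P\to P'$ to an isomorphism $\bX\to\bX'$. Indeed, for arbitrary choices of finite $\bX,\bX'\le\bF$ with order-isomorphic echelonings no such isomorphism need exist: take $\bX$ induced by the metric triangle with distances $1,2,2$ and $\bX'$ by the triangle with distances $1,1,2$; both echelonings are $3$-element chains, but the single-pair class of $\bX$ would have to map to a two-pair class of $\bX'$, which is impossible. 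There is also a prior gap: a finite subset $X\subseteq F$ chosen to realize the classes in $P$ will in general realize \emph{further} classes on the remaining pairs, so the existence of $\bX\le\bF$ with canonical image of $E(\bX)$ \emph{exactly} $P$ already needs an argument. What your lifting step really requires is the realization statement ``a finite echeloned space embeds into $\bF$ with its echelon classes sent to any prescribed elements of $E(\bF)\setminus\{\bot_\bF\}$'' --- in the paper this is the corollary following Theorem~\ref{prop_F-1isC-colHomUniGraph}, proved later from the coloured-graph universality; to use it here you would have to prove it directly (by a weak-homogeneity extension argument) or reorganize, and at that point the \Fraisse-limit detour buys little. (The universality half also quietly assumes that every countable chain with least element is the echeloning of some countable echeloned space; true, but only justified in your text for finite chains.)

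The substantive content is exactly what your final sentence identifies: that $E(\bF)\setminus\{\bot_\bF\}$ is countable, dense, and without endpoints, after which Cantor's theorem finishes the proof. This is precisely the paper's proof: given $(u_1,u_2)<_\bF(u_3,u_4)$, one extends the $4$-point subspace by one new point $v$ with $(u_1,u_2)<(v,u_i)\sim(v,u_j)<(u_3,u_4)$, and universality plus weak homogeneity of $\bF$ realize this extension inside $\bF$, producing a class strictly between; absence of endpoints is analogous. Your ``alternative direct route'' is this argument, but in your write-up it is only mentioned as an aside, while your main route leaves the key lifting/realization step unjustified. So as it stands the proposal does not constitute a complete proof; carrying out the one-point-extension argument (your alternative route) is the way to close it.
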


\begin{proof}
What we will show is that excluding $\bot_\bF$ from the echeloning of $\bF$ leaves us with a countable dense linear order without endpoints. In other words, the original structure is isomorphic to the set of non-negative rational numbers equipped with the natural order.

As $\bF$ is an echeloned space,  $E(\bF)$ is a linear order. 
Take any $u_1, u_2, u_3, u_4 \in F$ such that $(u_1, u_2) <_\bF (u_3, u_4)$. Define $U \coloneqq  \{u_1, u_2, u_3, u_4\}$ and let $\bU$ be the echeloned subspace of $\bF$ induced by $U$. Further, let $v \not\in F$ be a new point and $V \coloneqq  U \cup \{v\}$. We define an echeloned superspace of $\bU$ on $V$, and call it $\bV$, for which  $(u_1, u_2) <_\bV (v, u_i)\sim_\bV (v,u_j) <_\bV (u_3, u_4)$, for any $i,j \in \{1, 2, 3, 4\}$. 

Since $\bF$ is homogeneous, it is also weakly homogeneous in the sense of \cite[p.~326]{hodges1993model}. Consequently, as $\bF$ is also universal, there exists $\iota\colon \bV\to \bF$, such that the following diagram commutes:
\[\begin{tikzcd}
 \bU \arrow[r, hook, "="] \arrow[dr, hook, " ="']& \bV\arrow[d, hook, dashed, "\iota"]\\
  &\bF.
\end{tikzcd}
\]

Let $u_5 \coloneqq  \iota(v)$. Then, $(u_1, u_2) <_\bF (u_1, u_5) <_\bF (u_3, u_4)$. Thus, $E(\bF)$ is indeed dense.

Similarly, one can prove the non-existence of both the smallest and the greatest element of $E(\bF) \setminus \{\bot_\bF\}$.
\end{proof}

\begin{PROP}\label{prop_leqS_buMS}
Every metric space that induces $\leq_\bF$ is  dull (and consequently bounded and uniformly discrete).
\end{PROP}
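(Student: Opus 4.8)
The plan is to reduce dullness to a single metric inequality and then establish that inequality by exploiting the genericity of $\bF$. So let $(F,d)$ be a metric space inducing $\leq_\bF$, and set $\tau\coloneqq\inf\{d(u,v)\mid u,v\in F,\ u\neq v\}$. I first claim that it suffices to show
\[
d(x,y)\le 2d(u,v)\qquad\text{for all }x,y,u,v\in F\text{ with }x\neq y\text{ and }u\neq v.
\]
Indeed, this gives $\operatorname{diam}(F,d)\le 2\tau$, so for arbitrary $x,x',y,y',z,z'\in F$ with $y\neq y'$ and $z\neq z'$ one has $d(x,x')\le\operatorname{diam}(F,d)\le 2\tau\le d(y,y')+d(z,z')$; that is, $(F,d)$ is dull, and boundedness and uniform discreteness then follow from Lemma~\ref{lem_dullMS}.

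To prove the displayed inequality, fix $x\neq y$ and $u\neq v$ in $F$ and put $b\coloneqq\eta_\bF(u,v)$, noting $b\neq\bot_\bF$ by axiom (ii) of Definition~\ref{orgDEF_sim}. Let $\bA$ be the finite substructure of $\bF$ carried by $A\coloneqq\{x,y,u,v\}$; the $\sim_\bF$-class $b$ already occurs in $E(\bA)$. Form the one-point extension $\bB$ of $\bA$ obtained by adjoining a fresh point $w$ with $\eta_\bB(w,p)=\eta_\bB(p,w)=b$ for every $p\in A$ and $\eta_\bB(w,w)=\bot_\bF$. It is routine that $\bB$ is a finite echeloned space with $\bB\restr_A=\bA$ (axiom (ii) holds precisely because $b\neq\bot_\bF$); the decisive feature is that $\bB$ creates no $\sim$-class beyond those already in $\bA$, so that $E(\bB)=E(\bA)$.

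By weak homogeneity together with universality of the \Fraisse limit $\bF$ (cf.\ the proof of Lemma~\ref{distQ}) there is an embedding $g\colon\bB\embedsto\bF$ with $g\restr_A=\id_A$. By Corollary~\ref{cor_sim_EMB} the induced order-embedding $\hat g\colon E(\bB)\embedsto E(\bF)$ fixes $E(\bA)$ pointwise, hence --- as $E(\bB)=E(\bA)$ --- is the identity on $E(\bB)$; in particular $\hat g(b)=b$. Writing $w'\coloneqq g(w)$ and using the commuting square of Corollary~\ref{cor_sim_EMB}, we get $\eta_\bF(w',x)=\hat g(\eta_\bB(w,x))=b=\eta_\bF(w',y)$, while $\eta_\bF(x,y)$ is unchanged. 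Since $d$ induces $\leq_\bF$, this means $d(w',x)=d(w',y)=d(u,v)$, and the triangle inequality gives $d(x,y)\le d(x,w')+d(w',y)=2d(u,v)$, as required.

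The one genuinely delicate point is precisely the construction of $w'$: weak homogeneity by itself controls only the relative order of echelon values, not their position in $E(\bF)\cong(\QQ^+_0,\leq)$ (Lemma~\ref{distQ}), so an arbitrary embedding of the desired configuration into $\bF$ need not realise the intended echelon values. The trick that resolves this --- and which I expect to be the crux of the argument --- is to present the target configuration as a one-point extension that uses only echelon values already present in the finite substructure $\bA$, which forces the induced order-embedding to be the identity exactly where it is needed; everything else is bookkeeping.
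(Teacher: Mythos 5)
Your proof is correct and follows essentially the same route as the paper: realize, via universality and weak homogeneity of $\bF$, a one-point extension in which a new point is $\sim_\bF$-equivalent at the class of a prescribed pair to both $x$ and $y$, and then apply the triangle inequality. The only (harmless) difference is presentational: you prove the pointwise bound $d(x,y)\le 2\,d(u,v)$ for all nonidentical pairs directly, whereas the paper fixes one pair $(x_0,y_0)$, deduces $\im d\subseteq[0,2d(x_0,y_0)]$, and then lets $d(x_0,y_0)$ tend to $\tau$ — and your explicit treatment of why the embedding preserves the intended echelon classes (only reusing classes already present in the finite base) is exactly the point the paper's appeal to universality leaves implicit.
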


\begin{proof} Let $(F,  d)$ be some metric space which induces $\bF$.  
Further, let $\phi \colon \im d \to E(\bF)$ be the naturally induced order-isomorphism. Define $\tau \coloneqq  \inf\{ d(x, y) \mid x, y \in F, x \not= y\}$. Recall that being dull is equivalent to $\im d\subseteq [\tau,2\tau]\cup \{0\}$ (see the proof of Lemma~\ref{lem_dullMS}).

Fix distinct $x_0, y_0 \in F$. Recall that $\bF$ is universal, thus for any choice of distinct $x, y \in F$ there have to exist $u,  v, w \in F$ such that 
\[
(u, v) \sim_\bF (x, y), \quad  (u, w) \sim_\bF (v, w) \sim_\bF (x_0, y_0).\] 
Put differently, $ d(u, v) = c$ and $d(u, w) =  d(v, w) = c_0$, where $c_0 \coloneqq   d(x_0, y_0)$ and $c \coloneqq   d(x, y)$:
\begin{center}
\begin{tikzpicture}[scale=1,baseline=0pt]
 \GraphInit[vstyle=Classic]
		\tikzset{VertexStyle/.style = {shape = circle, draw,minimum size = 2pt, inner sep = 2pt}}        
		\tikzset{EdgeStyle/.style = {-}}
		\tikzset{>=stealth'}		
   		\Vertex[x=0,y=-0.75,Lpos=-90,L=$v$]{y}
   		\Vertex[x=1.5,y=0,L=$w$.]{z}
   		\Vertex[x=0,y=0.75,Lpos=90,L=$u$]{x}
        \Edge[label=$c_0$](y)(z)
        \Edge[label=$c_0$](z)(x)
        \Edge[label=$c$](x)(y)
         \end{tikzpicture}
         \end{center}

By the triangle inequality, we get $c\leq 2c_0$.

Since $(x,y)$ (and hence $c$) was arbitrary, we have $\im d\subseteq [0,2c_0]$.

By letting $c_0$ converge to $\tau$ we obtain that $\im d\subseteq [0,2\tau]$. By the definition of $\tau$ we actually have $\im d\subseteq \{0\}\cup [\tau,2\tau]$. Hence, $(F,  d)$ is dull.
\end{proof}

\begin{COR}\label{notUry}
$\bF$ is not isomorphic to the echeloned space induced by the (bounded) rational Urysohn space.
\end{COR}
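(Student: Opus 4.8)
The plan is to argue by contradiction using Proposition~\ref{prop_leqS_buMS}. Suppose that $\bF$ were isomorphic, as an echeloned space, to $(U,\leq_\bU)$, the echeloned space induced by the bounded rational Urysohn space $(U,d)$. An isomorphism of echeloned spaces is in particular a bijection $F\to U$, so we may transport the metric $d$ back along it to obtain a metric $d'$ on $F$ with the properties that $(F,d')$ is isometric to $(U,d)$ and that $d'$ induces the echelon $\leq_\bF$.

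First I would feed $(F,d')$ into Proposition~\ref{prop_leqS_buMS}: since $d'$ induces $\leq_\bF$, that proposition tells us $(F,d')$ is dull, hence (by Lemma~\ref{lem_dullMS}) uniformly discrete. Because $(F,d')$ and $(U,d)$ are isometric, it follows that the bounded rational Urysohn space itself is uniformly discrete.

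The second step is to contradict this. The bounded rational Urysohn space is universal for the class of finite rational metric spaces of diameter at most $1$ (the normalization fixing the word ``bounded''); in particular, for every integer $n\geq 1$ the two‑point metric space with mutual distance $\tfrac1n$ embeds into it. Hence $U$ contains pairs of points at arbitrarily small positive distance and cannot be uniformly discrete — contradiction, which completes the argument.

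I do not expect a genuine obstacle here; the only points requiring a word of care are, first, to make explicit that an isomorphism of echeloned spaces carries along the whole metric structure, so that uniform discreteness (a metric, not a purely echeloned, property) is legitimately inherited by $(U,d)$; and second, to fix the intended meaning of ``bounded rational Urysohn space''. As an alternative that sidesteps the metric transport entirely, one could work at the level of echelonings: Corollary~\ref{cor_sim_EMB} would yield $E(\bF)\cong E(U)$, while Lemma~\ref{distQ} gives $E(\bF)\cong(\QQ^+_0,\leq)$, which has no greatest element, whereas $E(U)$ does have a greatest element (the diameter $1$, which is attained by universality) — again a contradiction. Either route is short, and I would present the first, since it is exactly what Proposition~\ref{prop_leqS_buMS} was set up to deliver.
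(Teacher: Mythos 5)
Your main argument is essentially the paper's own proof: Proposition~\ref{prop_leqS_buMS} forces any metric inducing $\leq_\bF$ to be dull, while the bounded rational Urysohn space is not dull (your transport of the metric along the isomorphism and the verification via arbitrarily small positive distances together with Lemma~\ref{lem_dullMS} just make explicit what the paper leaves implicit), so it cannot induce $\bF$. Your sketched alternative comparing $E(\bF)\cong(\QQ^+_0,\leq)$ (Lemma~\ref{distQ}, no greatest element) with the echeloning of the Urysohn space (greatest element attained) is also sound, but the route you chose to present is the one the paper takes.
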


\begin{proof}
As the (bounded) rational Urysohn space is not dull, it follows by Proposition~\ref{prop_leqS_buMS} that it cannot induce $\bF$.
\end{proof}

However, the echeloned space $\bF$ is indeed induced by a \emph{dull Urysohn space}, as we now explain. 
%This does not preclude interesting automorphisms to occur (see Section~\ref{sec_Ramsey} and in particular Proposition~REF) since a dull metric space does not have midpoints.

Let $S = \{0\} \cup (1, 2) \cap \QQ$. Consider the $S$-Urysohn space $\bU_S = (U,  d_{\bU_S})$, \ie the \Fraisse limit of all finite metric spaces with distances in $S$ (see~\cite[Theorem 1.4]{Sauer}). Let $\bM =  (U, \leq_\bM)$ be the echeloned space induced on $U$ by $ d_{\bU_S}$.

\begin{PROP}
    The echeloned space $\bM$ is isomorphic to $\bF$.
\end{PROP}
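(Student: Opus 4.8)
The plan is to identify $\bM$ with the \Fraisse limit $\bF$ by verifying the three defining properties: $\bM$ is countable, its age is the class of all finite echeloned spaces, and it is weakly homogeneous in the sense of \cite[p.~326]{hodges1993model}; since the finite echeloned spaces form a \Fraisse class (Proposition~\ref{thm_SIMisFraisse}), these three facts force $\bM \cong \bF$. Countability is immediate, as is the inclusion $\Age(\bM) \subseteq \{\text{finite echeloned spaces}\}$. For the reverse inclusion I would use the ``dull re-metrization'' already exploited in the proof of Proposition~\ref{MScharacter}: a finite echeloned space $\bA$ has $E(\bA)$ a finite chain, which order-embeds into $\{0\}\cup((1,2)\cap\QQ)=S$ by sending the least element to $0$; transporting this along $\eta_\bA$ yields an $S$-valued function on $A^2$ which is automatically a metric (symmetry and vanishing exactly on the diagonal are clear, and the triangle inequality holds because among any three relevant distances at most one can be $0$ and the rest exceed $1$) and which induces $\leq_\bA$. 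Embedding $(A,d_A)$ isometrically into $\bU_S$ by universality, and noting that an isometric embedding is in particular an embedding of the induced echeloned spaces, gives $\bA \embedsto \bM$. Hence $\Age(\bM)$ is exactly the class of finite echeloned spaces.

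The core of the argument is weak homogeneity, which I would prove by reducing (by induction on the number of added points) to a one-point extension statement: given a finite echeloned subspace $\bA$ of $\bM$ (so $A\subseteq U$ and $\leq_\bA = \leq_\bM\restr A$) and a one-point echeloned extension $\bB = (A\cup\{b\},\leq_\bB)$, I must produce $u\in U\setminus A$ with $\bM\restr(A\cup\{u\})\cong\bB$ over $A$ via $b\mapsto u$. First I would build a metric $d_B$ on $A\cup\{b\}$ with distances in $S$ extending $d_{\bU_S}\restr A$ and inducing $\leq_\bB$. Here $E(\bA)$ embeds into $E(\bB)$ (Corollary~\ref{cor_sim_EMB}), both are finite chains with common least element, and $d_{\bU_S}\restr A$ realizes the positive levels of $E(\bA)$ as a strictly increasing finite sequence in $(1,2)\cap\QQ$; using density of $(1,2)\cap\QQ$ (with no endpoints inside $(1,2)$, so one can always slip below the smallest realized value, above the largest, or between consecutive ones) I extend this to an order-embedding of all positive levels of $E(\bB)$ into $(1,2)\cap\QQ$ and set $d_B(b,a)$ to be the value attached to $\eta_\bB(b,a)$. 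By construction $d_B$ is an $S$-valued metric inducing $\leq_\bB$. Then $(A,d_{\bU_S}\restr A)\subseteq(A\cup\{b\},d_B)$ is a one-point extension of finite $S$-metric spaces, so the extension property of the $S$-Urysohn space $\bU_S$ gives an isometric embedding $(A\cup\{b\},d_B)\embedsto\bU_S$ fixing $A$ pointwise; its value $u$ on $b$ lies outside $A$ (since $d_{\bU_S}(u,a)=d_B(b,a)>1$ for $a\in A$), and because the echelon is determined by the metric, $\bM\restr(A\cup\{u\})\cong\bB$ over $A$. Iterating yields the full extension property, hence weak homogeneity, and \Fraisse's theorem then gives $\bM\cong\bF$.

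I expect the one-point metric-extension step to be the main obstacle: one has to be sure that every echeloned one-point extension of a subspace of $\bM$ can be realized by enlarging the given $S$-valued metric, and this is exactly where the specific order type of $S$ is used — a bottom point $0$ followed by a dense copy of $(1,2)\cap\QQ$, which simultaneously forces all triangle inequalities to hold for free and leaves enough room to interleave the new distance levels in the prescribed order. A secondary point worth stating explicitly is that an echeloned isomorphism between two finite subspaces of $\bM$ need not be an isometry of $d_{\bU_S}$ (distinct subspaces may place ``the same level'' at different distances), which is precisely why the proof re-metrizes and appeals to the extension property of $\bU_S$ rather than to its metric homogeneity directly.
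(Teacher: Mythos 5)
Your proposal is correct and follows essentially the same route as the paper: prove universality by re-metrizing a finite echeloned space with a dull $S$-valued metric (as in Proposition~\ref{MScharacter}) and embedding into $\bU_S$, and prove weak homogeneity by extending the metric pulled back from $d_{\bU_S}$ to an $S$-valued metric inducing the extended echelon (using density of $(1,2)\cap\QQ$) and then invoking the extension property of $\bU_S$. Your reduction to one-point extensions and the explicit interleaving of the new distance levels is just a more detailed rendering of the step the paper dispatches in a parenthetical, not a different argument.
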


\begin{proof}
By the uniqueness of \Fraisse limits, it is enough to prove that $\bM$ is weakly homogeneous and universal for the class of all finite echeloned spaces.

\begin{description}
    \item[Universality] Obviously, every finite substructure of $\bM$ is a finite echeloned space. Conversely, let $\bA = (A, \leq_\bA)$ be a finite echeloned space. By Proposition~\ref{MScharacter}, we can realize $\leq_\bA$ by a (dull) metric $ d_\bA$, whose image is in $\{0\} \cup (1, 2)$. Since this image is finite, we can moreover assume that $ d_\bA$ only takes rational values (up to applying an order-automorphism of $\{0\} \cup (1, 2)$). We have therefore realized $\bA$ as a metric space with distances in $S$. By the universality of the Urysohn space $\bU_S$, there exists an isometric embedding $f$ of $(A,  d_\bA)$ into $(U,  d_{\bU_S})$. Such an embedding is also an embedding of the respective induced echeloned spaces, \ie $\bA$ indeed embeds into $\bM$.
    \item[Weak homogeneity] Let $\bA =(A, \leq_\bA)$ and $\bB = (B, \leq_\bB)$ be two finite echeloned spaces such that $\bA\le\bB$. Let $f$ be an embedding of $\bA$ into $\bM$. We can realize $\bA$ as a metric space by using the metric inducing $\leq_\bM$, \ie by defining
\begin{equation*}
     d_\bA (x, y) \coloneqq   d_{\bU_S} (f(x), f(y)) \qquad (x, y \in A).
\end{equation*}
This trivially makes $f$ an isometric embedding of $(A,  d_\bA)$ into $(U,  d_{\bU_S})$. Now choose any metric $ d_\bB$ on $B$, extending $ d_\bA$, with image in $S$ and inducing $\leq_\bB$ (such a metric exists since $\bB$ is finite and the order of $(1, 2) \cap \QQ$ is dense and without minimum nor maximum). By the universality and the weak homogeneity of the Urysohn space $\bU_S$, there exists an isometric embedding $g$ of $\bB$ into $\bU_S$ such that the diagram
\begin{equation*}\begin{tikzcd}
(A,  d_\bA) \arrow[dr, hook, "f"'] \arrow[r, hook, "="]& (B,  d_\bB) \arrow[d, dashed, hook, "g"]\\
  &  (U,  d_{\bU_S}).
\end{tikzcd}
\end{equation*}
commutes. Obviously, $g$ is also an embedding of the induced echeloned space.\qedhere
\end{description}
\end{proof}

\begin{REM}
Let $\tau_1, \tau_2 > 0$ and, for $i = 1, 2$, let $T_i = T_i' \cup \{0\}$, where $T_i'$ is any countably infinite subset of $(\tau_i, 2 \tau_i)$ which is order-dense and without maximum nor minimum. As shown by the above proposition, the $T_i$-Urysohn space $\bU_{T_i}$ induces the echeloned space $\bF$, independently of $i$. It follows that the isometry groups $\Iso(\bU_{T_1})$ and $\Iso(\bU_{T_2})$ are isomorphic (since they are nothing but the subgroup of $\Aut(\bF, \leq_\bF)$ that fixes the echeloning), whereas the spaces $\bU_{T_1}$ and $\bU_{T_2}$ are not isometric as soon as $T_1 \neq T_2$.
\end{REM}

Echeloned spaces naturally give rise to edge coloured graphs where every edge is coloured by its equivalence class in the corresponding echeloning. We describe now this graph for $\bF$.

\begin{DEF}
Let $C$ be a non-empty set. A \emph{$C$-coloured graph} $\Gamma$ is an ordered pair $(V, \chi)$, where $V$ is a set  and $\chi \colon [V]^2 \to C$ (here and in the following we adopt the standard notation that $[V]^2$ refers to the set of $2$-element subsets of $V$).  $V$ is called  the \emph{set of vertices} and $\chi$ is called the \emph{edge-colouring function} of $\Gamma$.
\end{DEF}

Let us stress that $C$-coloured graphs are by definition complete graphs.

\begin{DEF}
Let $\Gamma_1 = (V_1, \chi_1)$ and $\Gamma_2 = (V_2, \chi_2)$ be two $C$-coloured graphs, where $C$ is a fixed set of colours. Then a \emph{homomorphism} from $\Gamma_1$ to $\Gamma_2$ is an injective map $f\colon V_1\to V_2$ such that
\[ \forall \{x, y\} \in [V_1]^2\,:\,
\chi_1(\{x, y\}) = \chi_2(\{f(x), f(y)\}).\]
\end{DEF}

Every $C$-coloured graph $\Gamma = (V, \chi)$ may be defined equivalently as a relational structure $\mathbf{\Gamma}$ over the signature $\{\varrho_c \mid c \in C\}$ consisting solely of binary relation symbols, where $\varrho_c^{\mathbf{\Gamma}} \coloneqq  \{(u, v) \in V^2 \colon u\neq v,\chi(\{u, v\}) = c \}$. In particular, a function $h \colon V_1 \to V_2$ is a homomorphism from $\Gamma_1$ to $\Gamma_2$ if and only if it is a homomorphism from $\mathbf{\Gamma}_1$ to $\mathbf{\Gamma}_2$. This allows us to identify every $C$-coloured graph $\Gamma$ with its associated relational structure $\mathbf{\Gamma}$. We shall freely do so without any further notice.

What follows is a characterisation of homogeneous universal countable $C$-coloured graphs, for countable $C$, following \cite{truss1985group} and \cite{tarzi2014multicoloured}.

\begin{DEF}
Let $\Gamma = (V, \chi)$ be a $C$-coloured graph, and let $k$ be a positive integer. We say that $\Gamma$ has the \emph{$(\ast_k)$-property} if for any $(c_1, c_2, \ldots, c_k) \in C^k$ and any
choice of $k$ finite, pairwise disjoint subsets of $V$, denoted $U_1, U_2, \ldots, U_k$, there exists a vertex $z \in V$ such that for all $i \in \{1, 2, \dots, k\}$ and all $u \in U_i$: $\chi(\{z, u\}) = c_i$.
$\Gamma$ is said to have the \emph{$(\ast_\infty)$-property} if it has the $(\ast_k)$-property for all $k \in \NN^+$.
\end{DEF}

Note that for every $k\in\mathbb{N}^+$ the $(\ast_{k+1})$-property implies the $(\ast_k)$-property.

It is straightforward to check that the class of all finite $C$-coloured graphs enjoys the amalgamation property, so we obtain the following:

\begin{LEM}
Let $C$ be a countable set of colours. Then the class of all finite $C$-coloured graphs is a \Fraisse~class. 
\end{LEM}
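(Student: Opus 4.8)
The plan is to verify the three defining properties of a \Fraisse class for the class $\mathcal{G}_C$ of all finite $C$-coloured graphs: the hereditary property (HP), the joint embedding property (JEP), the amalgamation property (AP), and the countability of the set of isomorphism classes. Since $C$ is countable and the associated relational signature $\{\varrho_c \mid c \in C\}$ is therefore countable (and consists of binary relation symbols only), there are only countably many finite $C$-coloured graphs up to isomorphism. The HP is immediate: a subset $W \subseteq V$ of a $C$-coloured graph $\Gamma = (V,\chi)$ inherits the colouring $\chi\restr_{[W]^2}$, giving a $C$-coloured graph that embeds into $\Gamma$ in the obvious way; equivalently, $C$-coloured graphs form a class of relational structures closed under substructures.

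The main point, as the excerpt already hints, is the AP, and since $\mathcal{G}_C$ has only one $C$-coloured graph on a one-element vertex set (which embeds into every $C$-coloured graph), JEP follows from AP exactly as in the proof of Proposition~\ref{thm_SIMisFraisse}. So I would focus on the AP. Given finite $C$-coloured graphs $\Gamma_A = (A,\chi_A)$, $\Gamma_{B_1} = (B_1, \chi_{B_1})$, $\Gamma_{B_2} = (B_2, \chi_{B_2})$ with embeddings $f_i \colon \Gamma_A \embedsto \Gamma_{B_i}$, I would form the set $C \coloneqq B_1 \dotcup (B_2 \setminus f_2[A])$ (after identifying $f_1[A]$ with $A$ inside $B_1$, so that $A$ is a common substructure), and define a colouring $\chi$ on $[C]^2$ by: $\chi(\{x,y\}) = \chi_{B_1}(\{x,y\})$ if $\{x,y\} \subseteq B_1$; $\chi(\{x,y\}) = \chi_{B_2}(\{x,y\})$ if $\{x,y\} \subseteq B_2$ (viewing $B_2$ as sitting inside $C$ via the identity on $B_2 \setminus f_2[A]$ and via $f_1 \circ f_2^{-1}$ on $f_2[A]$); and, for a pair with one vertex in $B_1 \setminus f_1[A]$ and the other in $B_2 \setminus f_2[A]$, pick any colour from $C$, say a fixed $c_0 \in C$ (this is the step that uses $C \neq \emptyset$). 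The only thing to check is well-definedness: a pair $\{x,y\}$ can lie in both $[B_1]^2$ and $[B_2]^2$ only when $x,y \in A$, and then $\chi_{B_1}(\{f_1(x),f_1(y)\}) = \chi_A(\{x,y\}) = \chi_{B_2}(\{f_2(x),f_2(y)\})$ because $f_1, f_2$ are embeddings, so the two clauses agree. Then $\Gamma_C = (C,\chi)$ is a finite $C$-coloured graph, and the inclusions $B_1 \embedsto C$ and $B_2 \embedsto C$ are visibly colour-preserving and injective, hence embeddings, and they agree on $A$ by construction; so $\Gamma_C$ together with these two maps witnesses amalgamation (in fact strong amalgamation, paralleling Remark~\ref{echelonhasSAP}).

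I do not expect any genuine obstacle here — this is the standard free-amalgamation argument for a class of relational structures defined purely by "colour constraints," and the absence of any forbidden configurations means every partial colouring extends to a full one as soon as $C$ is non-empty. The only point requiring a word of care is the well-definedness check on pairs inside the common part $A$, and the observation that the crossing edges can be coloured arbitrarily precisely because there are no axioms constraining them; once this is noted, HP, JEP, AP, and countability are all in hand, and \Fraisse's theorem applies to give the conclusion.
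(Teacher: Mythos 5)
Your proof is correct and is exactly the argument the paper has in mind: the paper merely asserts that the amalgamation property is ``straightforward to check,'' and your verification (HP and countability of isomorphism types being immediate, JEP from AP via the one-point graph, and AP by gluing over $A$ and colouring the crossing edges with an arbitrary fixed colour, with the only care point being well-definedness over the common part) is the standard argument it alludes to, including the observation that this yields strong amalgamation. The only cosmetic issue is the clash of notation between the colour set $C$ and your amalgam's vertex set $C$, which you should rename.
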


For every countable $C$ we will denote the universal homogeneous $C$-coloured graph, \ie the \Fraisse~limit of the class of all finite $C$-coloured graphs, by $\Tc = (V_C, \chi_C)$.

\begin{PROP}\label{prop_*inf}
Let $C$ be countable. Then, a countable $C$-coloured graph has the $(\ast_\infty)$-property if and only if it is homogeneous and universal for the class of finite $C$-coloured graphs.
\end{PROP}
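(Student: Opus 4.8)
The plan is to prove both implications by the routine back-and-forth method, using the elementary fact that for $C$-coloured graphs every homomorphism is automatically an embedding (the colour of each pair is reproduced exactly), so that ``universal'' and ``homogeneous'' carry their usual \Fraisse-theoretic meaning here.

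\emph{The direction ($\Leftarrow$).} Suppose $\Gamma=(V,\chi)$ is countable, homogeneous, and universal. Fix $k$, colours $c_1,\dots,c_k$, and pairwise disjoint finite sets $U_1,\dots,U_k\subseteq V$; put $U\coloneqq U_1\cup\dots\cup U_k$. I would form the finite $C$-coloured graph $\Gamma'$ obtained from the subgraph $\Gamma\restr U$ induced on $U$ by adjoining one new vertex $z$ with $\chi'(\{z,u\})\coloneqq c_i$ for $u\in U_i$ (well defined, as the $U_i$ partition $U$). By universality there is an embedding $g\colon\Gamma'\embedsto\Gamma$. Then $g\restr U$ and the inclusion $U\embedsto\Gamma$ are two embeddings of the finite graph $\Gamma\restr U$ into $\Gamma$, so $u\mapsto g(u)$ is an isomorphism between finite induced subgraphs of $\Gamma$; by homogeneity it extends to some $\alpha\in\Aut\Gamma$ with $\alpha(g(u))=u$ for all $u\in U$. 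The vertex $\alpha(g(z))$ then satisfies $\chi(\{\alpha(g(z)),u\})=\chi(\{g(z),g(u)\})=\chi'(\{z,u\})=c_i$ for every $u\in U_i$, so it witnesses $(\ast_k)$. As $k$ was arbitrary, $\Gamma$ has the $(\ast_\infty)$-property.

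\emph{The direction ($\Rightarrow$).} Suppose $\Gamma$ has the $(\ast_\infty)$-property; note that $(\ast_1)$ applied to $U_1=\varnothing$ forces $V\neq\varnothing$. For \emph{universality}, given a finite $C$-coloured graph $\Delta=(W,\psi)$, enumerate $W=\{w_1,\dots,w_m\}$ and build an embedding one vertex at a time: if $w_1,\dots,w_\ell$ have been sent colour-faithfully to $v_1,\dots,v_\ell$, partition $\{v_1,\dots,v_\ell\}$ according to the value of $\psi(\{w_{\ell+1},w_j\})$, obtaining finitely many pairwise disjoint sets with attached colours, and apply $(\ast)$ to get $z\in V$ with $\chi(\{z,v_j\})=\psi(\{w_{\ell+1},w_j\})$ for all $j\le\ell$; since each $v_j$ carries a prescribed edge-colour to $z$ while $\{z,z\}\notin[V]^2$, necessarily $z\notin\{v_1,\dots,v_\ell\}$, so we may set $v_{\ell+1}\coloneqq z$. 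For \emph{homogeneity}, let $p$ be an isomorphism between two finite induced subgraphs of $\Gamma$, enumerate $V=\{x_1,x_2,\dots\}$, and construct an increasing chain of finite partial isomorphisms $p=p_0\subseteq p_1\subseteq\cdots$ with $x_n\in\dom p_{2n}$ and $x_n\in\im p_{2n+1}$; each extension step is exactly the argument just described (to put a new point $x$ into the domain of a finite partial isomorphism $q$, transport the colour pattern of $x$ over $\dom q$ through $q$ and invoke $(\ast)$ to find a suitable $y\notin\im q$, then adjoin $(x,y)$; the ``back'' step is symmetric). The union $\bigcup_n p_n$ is then a colour-preserving bijection of $V$, that is, an automorphism of $\Gamma$ extending $p$.

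\emph{The main obstacle.} There is essentially none, as the whole argument is the standard \Fraisse-limit back-and-forth; the single load-bearing subtlety is the observation that a vertex $z$ produced by the $(\ast_k)$-property is automatically distinct from every vertex lying in the given sets $U_i$ (it is assigned a genuine edge-colour to each of them, and no vertex has an edge to itself). This is what legitimises the ``add one vertex at a time'' construction in both parts; everything else is bookkeeping.
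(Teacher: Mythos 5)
Your proof is correct and follows essentially the same route as the paper: the forward direction is the same one-vertex-at-a-time embedding argument plus the back-and-forth for homogeneity (which the paper compresses into ``weakly homogeneous, hence homogeneous''), and the reverse direction differs only cosmetically in that you embed the one-point extension anywhere by universality and pull it back onto $\bigcup_i U_i$ with an automorphism, where the paper invokes weak homogeneity to embed it directly over the identity on $\bigcup_i U_i$. Both versions rest on the same observation you highlight, that a witness of the $(\ast_k)$-property is automatically distinct from all vertices of the prescribed sets.
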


\begin{proof} 
``$\Rightarrow$'': Let $\Gamma = (V, \chi)$ be a countable $C$-coloured graph for which the $(\ast_\infty)$-property holds. 

Let us start by showing that $\Gamma$ is universal. We proceed by induction on the size $\ell$ of the $C$-coloured graph to be embedded. The empty $C$-coloured graph embeds to $\Gamma$ trivially. Suppose that $\Delta=(W,\eta)$ is a $C$-coloured graph of size $\ell+1$. Let $v\in W$. Let $\Delta'$ be the $C$-coloured subgraph of $\Delta$ induced by $W\setminus \{v\}$, and suppose that $\Delta'$ embeds into $\Gamma$ by an embedding $\iota$. Let $c_1,\dots,c_k$ be all the (pairwise distinct) colours appearing as a colour of an edge from $v$ in $\Delta$. For each $i\in \{1,\dots,k\}$ let $U_i\coloneqq \{x\in W\setminus \{v\}\mid \eta (\{x,v\})=c_i\}$. Note that $\iota (U_1),\dots,\iota (U_k)$ are pairwise disjoint. By the $(*_{\infty})$-property (and, therefore, $(*_{k})$-property) there exists a vertex $z\in V$ such that for all $i\in \{1,\dots,k\}$ and all $u\in \iota (U_i)\,:\, \chi (\{z,u\})=c_i$. Hence, the map $\hat{\iota}\colon W\to V$ extending $\iota$ and sending $v$ to $z$ is an embedding of $\Delta$ into $\Gamma$. This finishes the proof that $\Gamma$ is universal. By iterating the argument above, it becomes clear that $\Gamma$ is also weakly homogeneous in the sense of \cite{hodges1993model}, and hence homogeneous. 

``$\Leftarrow$'': Consider any universal homogenous countable $C$-coloured graph $\Gamma = (V, \chi)$. Fix any  positive integer $k$ and a tuple $(c_1, \dots, c_k) \in C^k$. Then choose $k$ finite, pairwise disjoint subsets $U_1, \dots, U_k$ of $V$. Define a finite $C$-coloured graph on the set of vertices $V_2 \coloneqq  \bigcup\limits_{i = 1}^n U_i \cup \{u\}$, where $u \not\in V$ is a new vertex, with the edge-colouring function $\chi_2$ which maps $\{u, v\}$ to $c_i$ for any $v \in U_i$ and $\{v_1, v_2\}$ to $\chi(\{v_1, v_2\})$ for any two distinct $v_1, v_2 \in V_2 \setminus \{u\}$. Let $\Gamma_1$ be the $C$-coloured subgraph of $\Gamma$ induced by $\bigcup\limits_{i = 1}^n U_i$. Clearly, there exist identity embeddings $\iota_1 \colon \Gamma_1 \embedsto \Gamma$ and $\iota_2 \colon \Gamma_1 \embedsto (V_2, \chi_2)$. Since $\Gamma$ is homogeneous, it is weakly homogeneous in the sense of \cite{hodges1993model}. Hence, as $\Gamma$ is also universal, there exists an embedding $f \colon (V_2, \chi_2)\to\Gamma$ for which $\iota_1 = f \circ \iota_2$. As a result, we get that $\chi(\{f(u), f(\iota_2(v))\}) = \chi(\{f(u), \iota_1(v)\})=\chi(\{f(u), v\})=c_i$ for all $i \in \{1, \dots, k\}$. Thus the $(\ast_k)$-property of $\Gamma$ holds. 
\end{proof}

\begin{OBS} \label{*2=Rado}
When a $C$-coloured graph $\Gamma = (V, \chi)$, for  $|C| = 2$, has the $(\ast_2)$-property, then the graph $(V, E)$, with the set of edges $E$ defined for a fixed $c \in C$ as follows:
\[
(u,v) \in E  \quad :\Longleftrightarrow \quad \chi(\{u, v\})  = c,
\]
is isomorphic to the Rado graph.
\end{OBS}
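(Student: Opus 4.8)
The plan is to invoke the classical characterization of the Rado graph (the countable random graph of Erd\H{o}s--R\'enyi) as the unique countable graph satisfying the \emph{one-point extension property}: for every pair $(A,B)$ of disjoint finite sets of vertices there is a vertex adjacent to every vertex of $A$ and to no vertex of $B$. Consequently it suffices to check two things about $(V,E)$: that it is countable (as is implicit in the surrounding discussion, cf.\ Proposition~\ref{prop_*inf}), and that it has the extension property. The $(\ast_2)$-property is essentially tailor-made to deliver the latter, so the argument is short.

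\textbf{Extension property from $(\ast_2)$.} Since $|C| = 2$, write $C = \{c, c'\}$, where $c$ is the colour chosen to define $E$ (so $\{u,v\} \in E$ exactly when $\chi(\{u,v\}) = c$). Let $A, B \subseteq V$ be disjoint and finite. Apply the $(\ast_2)$-property to the tuple $(c_1, c_2) = (c, c')$ and the pairwise disjoint finite sets $(U_1, U_2) = (A, B)$: it produces a vertex $z \in V$ with $\chi(\{z, a\}) = c$ for all $a \in A$ and $\chi(\{z, b\}) = c'$ for all $b \in B$. Then $z \notin A \cup B$ (a vertex lying in a non-empty $U_i$ would force the meaningless value $\chi(\{z,z\})$), every $a \in A$ is $E$-adjacent to $z$, and every $b \in B$ is non-adjacent to $z$ since the colour of $\{z,b\}$ is $c' \neq c$. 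This is precisely the extension property for $(V,E)$.

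\textbf{Countability and conclusion.} By hypothesis $V$ is countable, and it is infinite: starting from any vertex, repeated application of the $(\ast_1)$-property (which follows from $(\ast_2)$, as noted after the definition of the $(\ast_k)$-property) to singletons yields ever-new vertices. Hence $(V,E)$ is a countably infinite graph with the one-point extension property, and the Erd\H{o}s--R\'enyi characterization identifies it with the Rado graph.

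\textbf{Main obstacle.} There is essentially no obstacle here: the content is just the observation that, for a two-element colour set, the $(\ast_2)$-property of the coloured graph is a verbatim restatement of the e.c.\ property of the underlying graph. The only points requiring (routine) care are the bookkeeping that the witness $z$ avoids $A \cup B$ and the automatic infiniteness of $V$; one could also dispense with citing the Erd\H{o}s--R\'enyi theorem altogether and instead run a direct back-and-forth between two graphs with the extension property, or alternatively note that $(\ast_2)$ for $|C|=2$ is equivalent to $(\ast_\infty)$ and invoke Proposition~\ref{prop_*inf}.
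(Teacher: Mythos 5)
Your proposal is correct and coincides with the argument the paper leaves implicit: the statement is recorded as an unproved Observation precisely because, for a two-element colour set, the $(\ast_2)$-property is a verbatim restatement of the one-point extension property characterizing the Rado graph (countability of $V$ being understood from the surrounding context), which is exactly how you argue. No further comment is needed.
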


We now establish a connection between the \Fraisse limit $\bF$ of finite echeloned spaces and the \Fraisse limit $\Tc$ of finite $C$-coloured graphs:

\begin{PROP}\label{C-coloured_G}
Fix $k \in \NN^+$ and choose pairwise distinct $c_1, \dots, c_k\in  E(\bF)\setminus\{\bot_\bF\}$. Define $C \coloneqq  \{c_1, \dots, c_k, c_\ast\}$, with $c_\ast \not\in E(\bF)$. Then $\Gamma_C \coloneqq  (F, \chi)$ is isomorphic to $\Tc$, where
\[
\chi \colon [F]^2 \to C,\, \{u, v\} \mapsto \begin{cases}
\eta_\bF(u, v)& \text{if } \eta_\bF(u, v) \in \{c_1, \dots, c_k\},\\
c_\ast& \text{otherwise}.
\end{cases}
\]
\end{PROP}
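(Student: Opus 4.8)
The plan is to use Proposition~\ref{prop_*inf}: since $C$ is finite (hence countable) and $\Gamma_C$ is countable, it suffices to show that $\Gamma_C$ has the $(\ast_\infty)$-property, and then invoke the uniqueness of the \Fraisse limit $\Tc$. So fix $m \in \NN^+$, a tuple $(d_1, \dots, d_m) \in C^m$, and pairwise disjoint finite subsets $U_1, \dots, U_m \subseteq F$; we must produce a vertex $z \in F$ with $\chi(\{z, u\}) = d_j$ for all $j$ and all $u \in U_j$. Write $U \coloneqq \bigcup_{j=1}^m U_j$ and let $\bU$ be the echeloned subspace of $\bF$ induced by $U$.

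The key step is to build a finite echeloned space $\bV$ extending $\bU$ by a single new point $v \notin F$, such that the colour $\chi(\{v, u\})$ (computed in $\bV$ via $\eta_\bV$) equals $d_j$ whenever $u \in U_j$. Concretely, for each index $j$ with $d_j \in \{c_1, \dots, c_k\}$, I want $(v, u) \sim_\bV$ the echelon class corresponding to $d_j$ for every $u \in U_j$; for each index $j$ with $d_j = c_\ast$, I want $(v, u)$ to land in some echelon class of $\bV$ that does \emph{not} belong to $\{c_1, \dots, c_k\}$. The latter is easily arranged by placing all such pairs $(v, u)$ strictly above every echelon class occurring in $\bU$ (and distinct from the finitely many $c_i$, which we can ensure since there are only finitely many classes to avoid — in fact a single fresh maximal class works, provided we also keep it away from the values $c_1,\dots,c_k$, which are among the classes of $\bU\le\bF$). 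One must then verify that the resulting preorder $\leq_\bV$ on $V^2$ is a genuine echelon: reflexivity, transitivity, and linearity are immediate from the construction since we are totally ordering a finite set of pair-classes; axiom~(i) holds because $(v,v)$ is put into the bottom class $\bot$; axiom~(ii) holds because no pair $(v,u)$ with $u \neq v$ is placed in the bottom class (each $d_j$-class and the fresh top class are strictly above $\bot$, as the $c_i$ are distinct from $\bot_\bF$ by hypothesis); axiom~(iii) is ensured by setting $(v,u)$ and $(u,v)$ into the same class. A mild subtlety: if some colour $d_j = c_i$ forces $(v,u)\sim_\bV$ the $c_i$-class, this class already appears in $\bU$, so consistency is automatic; and if two different indices $j, j'$ use the same colour, no conflict arises since we are specifying the class of $(v,u)$ pointwise and consistently.

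Once $\bV$ is a finite echeloned space with $\bU \le \bV$, universality and weak homogeneity of $\bF$ (Proposition~\ref{thm_SIMisFraisse} together with \Fraisse's theorem) yield an embedding $\iota \colon \bV \embedsto \bF$ restricting to the inclusion $\bU \embedsto \bF$; put $z \coloneqq \iota(v)$. By Corollary~\ref{cor_sim_EMB}, $\iota$ induces an embedding $\hat\iota \colon E(\bV) \embedsto E(\bF)$ with $\hat\iota \circ \eta_\bV = \eta_\bF \circ \iota^2$. Hence for $u \in U_j$ with $d_j \in \{c_1,\dots,c_k\}$, we get $\eta_\bF(z, u) = \hat\iota(\eta_\bV(v, u)) = \hat\iota$ of the class $\hat\iota^{-1}(c_i) \mapsto c_i$ — here I use that $\eta_\bV(v,u)$ is, by construction, the preimage under $\hat\iota$ of the class $c_i$, which holds because $\hat\iota$ restricted to the classes of $\bU$ is the identity and the $d_j$-class of $\bV$ equals a class of $\bU$; therefore $\chi(\{z,u\}) = \eta_\bF(z,u) = c_i = d_j$. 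For $u \in U_j$ with $d_j = c_\ast$: the class $\eta_\bV(v,u)$ is the fresh class of $\bV$, strictly above all classes of $\bU$ and distinct from each $c_i$; since $\hat\iota$ is an order-embedding sending the classes of $\bU$ identically, $\hat\iota$ of this fresh class is some element of $E(\bF)$ strictly above every $\eta_\bF(u', u'')$ with $u', u'' \in U$, but we need more: we need $\eta_\bF(z,u) \notin \{c_1,\dots,c_k\}$. This is the point requiring care, and I would handle it by including $c_1, \dots, c_k$ explicitly inside the echelon structure of $\bU$ from the start — i.e. enlarge $U$ (or rather work with a finite echeloned space $\bU'$ containing $U$ together with realizations of the classes $c_1,\dots,c_k$) so that the fresh top class of $\bV$ lies strictly above all of $c_1,\dots,c_k$; then $\hat\iota$ of the fresh class lies strictly above each $c_i$ in $E(\bF)$ and hence differs from all of them, giving $\chi(\{z,u\}) = c_\ast = d_j$.

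The main obstacle is precisely this last bookkeeping point: ensuring that the "generic'' colour $c_\ast$ is genuinely realized, i.e. that the new point $z$ connects to the $U_j$'s (for $d_j=c_\ast$) by an echelon class that avoids the distinguished classes $c_1,\dots,c_k$ — an embedding of $\bV$ into $\bF$ could in principle send the fresh class of $\bV$ onto one of the $c_i$ unless we have pinned those classes down inside the amalgamation diagram. Building $\bV$ so that the fresh class is forced strictly above all of $c_1,\dots,c_k$ (by first padding $\bU$ to contain witnesses for these classes) removes the difficulty, since order-embeddings preserve strict inequalities. With that in place, the argument goes through and $\Gamma_C$ has the $(\ast_\infty)$-property, whence $\Gamma_C \cong \Tc$ by Proposition~\ref{prop_*inf} and the uniqueness of \Fraisse limits.
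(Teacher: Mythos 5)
Your proposal is correct and follows essentially the same route as the paper: both reduce to the $(\ast_\infty)$-property via Proposition~\ref{prop_*inf}, build a one-point extension of the finite subspace, and crucially first enlarge $U$ with witness pairs realizing $c_1,\dots,c_k$ so that the pairs meant to get colour $c_\ast$ can be placed strictly above all of them, forcing the embedding (obtained from universality and weak homogeneity of $\bF$) to avoid these classes. The bookkeeping point you flag about $c_\ast$ is exactly the one the paper handles by the same padding device.
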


\begin{proof} By Proposition~\ref{prop_*inf}, $\Gamma_C $ is isomorphic to $\Tc$ if it has the $(\ast_\infty)$-property. As $|C| = k + 1$, it suffices to show  that $\Gamma_C$ has the $(\ast_{k+1})$-property. Consider thus the $(k+1)$--tuple $(c_1, \dots, c_k, c_\ast)$. Without loss of generality, assume $c_1 <_{E(\bF)} c_2 <_{E(\bF)} \dots <_{E(\bF)} c_k$. 

Let $U_1, \dots, U_k, U_* \subseteq F$ be finite and pairwise disjoint. Set $U' \coloneqq  U_1 \cup U_2 \cup \dots \cup U_k \cup U_*$. Further, let $U$ be a finite superset of $U'$ such that for all $i \in \{1 \dots, k\}$ there exist $v_1, v_2 \in U$ for which $\chi(\{v_1, v_2\}) = c_i$. Further, let $w \not\in F$ be a new point, $V \coloneqq  U \cup \{w\}$, and let $\bU$ be the echeloned subspace of $\bF$ induced by $U$. We define an echelon $\leq_\bV$ on $V$ such that 
\begin{itemize}
    \item $\leq_\bV \cap~U^2 =\  \leq_\bU (=\  \leq_\bF \cap~U^2)$,
    \item for all $i\in \{1,\dots,k\}$, for any $v_1,v_2 \in V$ such that $\chi (\{v_1,v_2\})=c_i$, and for all $u\in U_i$, $(w, u) \sim_\bV (v_1, v_2)$,
    \item for any $u_1,u_2 \in U_*$ and $(x, y) \in U^2$ $(x, y) <_\bV (w, u_1)\sim_\bV (w,u_2)$.
\end{itemize}

By the construction $\bU$ is a subspace of $\bV$. Since $\bF$ is universal and weakly homogeneous, there exists an embedding $h \colon \bV \embedsto \bF$ such that the following diagram commutes:

\begin{equation}\tag{\dag}\label{WHstar}\begin{tikzcd}
\bU\arrow[dr, hook, "="'] \arrow[r, hook, "="]& \bV \arrow[d, dashed, hook, "h"]\\
  &  \bF.
\end{tikzcd}
\end{equation}
Let $z \coloneqq  h(w)$. It remains to show that $\chi(\{z, u\}) = c_i$, whenever $u \in U_i$ for any $i \in \{1, \dots, k\} \cup \{*\}$. 

First, fix $i \in \{1, \dots, k\}$ and $u \in U_i$. Recall that there exist some $v_1, v_2 \in U$ such that $\chi(\{v_1, v_2\})=c_i$. In particular, we have $\eta_\bF(v_1, v_2) = \chi(\{v_1, v_2\}) =c_i$. By construction we have $(w,u) \sim_\bV (v_1, v_2)$. Thus, 
    $(h(w), h(u)) \sim_\bF (h(v_1), h(v_2))$. By the commutativity of diagram~\eqref{WHstar} we obtain that $(z,u)\sim_\bF (v_1,v_2)$.
As a result $\chi(\{z, u\}) = \eta_\bF(z, u) = \eta_\bF(v_1, v_2) = c_i$.

It remains to show that $\chi(\{z, u\}) = c_\ast$ for all $u \in U_*$. Let $u \in U_*$ be arbitrary. Then by definition of $\leq_\bV$ we know that for $i \in \{1, \dots, k\}$ and any $(v_1, v_2) \in U^2$ with $\chi(\{v_1, v_2\}) = c_i$, we have $(v_1, v_2) <_\bV (w, u)$. We get that $(h(v_1), h(v_2)) <_\bF (h(w), h(u))$, and hence, by the commutativity of diagram~\eqref{WHstar}, $(v_1, v_2) <_\bF (z, u)$. In particular, $\eta_\bF(z, u) \not= \eta_\bF(v_1, v_2) = c_i$ for any $i \in \{1, \dots, k\}$. Consequently, $\chi(\{z, u\}) =  c_\ast$.
\end{proof}

\begin{COR}
For any $c \in E(\bF) \setminus \{\bot_\bF\}$ the graph $(F, E)$ with the set of edges defined by
\[\{u, v\} \in E \quad :\Longleftrightarrow \quad \eta_\bF(u, v) = c\]
is isomorphic to the Rado graph.
\end{COR}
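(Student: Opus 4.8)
The plan is to derive this corollary directly from Proposition~\ref{C-coloured_G} together with Observation~\ref{*2=Rado}. The statement concerns a single colour $c \in E(\bF)\setminus\{\bot_\bF\}$, so I would instantiate Proposition~\ref{C-coloured_G} with $k = 1$ and $c_1 \coloneqq c$. This yields a set of colours $C = \{c, c_\ast\}$ with $|C| = 2$, and the induced $C$-coloured graph $\Gamma_C = (F,\chi)$, where $\chi(\{u,v\}) = c$ if $\eta_\bF(u,v) = c$ and $\chi(\{u,v\}) = c_\ast$ otherwise, is isomorphic to $\Tc$, the \Fraisse limit of all finite $C$-coloured graphs.

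Next I would invoke Proposition~\ref{prop_*inf}: since $\Gamma_C \cong \Tc$ and $\Tc$ is (by definition) the countable universal homogeneous $C$-coloured graph, $\Gamma_C$ has the $(\ast_\infty)$-property, and in particular the $(\ast_2)$-property. Now I would apply Observation~\ref{*2=Rado} with this two-element colour set $C$ and the distinguished colour $c$: the graph $(F,E)$ with $(u,v)\in E :\Longleftrightarrow \chi(\{u,v\}) = c$ is isomorphic to the Rado graph. Finally, I would observe that by the definition of $\chi$ in Proposition~\ref{C-coloured_G}, the condition $\chi(\{u,v\}) = c$ is literally equivalent to $\eta_\bF(u,v) = c$ (here one uses that $c \ne c_\ast$, so the ``otherwise'' clause never produces $c$). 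Hence the edge set $E$ defined by $\{u,v\}\in E :\Longleftrightarrow \eta_\bF(u,v) = c$ coincides with the one produced by Observation~\ref{*2=Rado}, and the result follows.

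There is no real obstacle here; the corollary is a one-line consequence of the machinery already set up, and the only thing to be slightly careful about is matching up notation: confirming that the colour $c$ plays the role of the distinguished $c_1$ in Proposition~\ref{C-coloured_G} and simultaneously the role of the fixed colour in Observation~\ref{*2=Rado}, and that the graph edge relation in the corollary's statement matches the one in Observation~\ref{*2=Rado} once one unwinds the definition of $\chi$. A short proof along the following lines would suffice.

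\begin{proof}
Apply Proposition~\ref{C-coloured_G} with $k = 1$ and $c_1 \coloneqq c$, so that $C = \{c, c_\ast\}$ has two elements and $\Gamma_C = (F,\chi) \cong \Tc$. By Proposition~\ref{prop_*inf}, $\Gamma_C$ has the $(\ast_\infty)$-property, hence the $(\ast_2)$-property. By Observation~\ref{*2=Rado} (applied to the distinguished colour $c$), the graph $(F, E')$ with $(u,v) \in E' :\Longleftrightarrow \chi(\{u,v\}) = c$ is isomorphic to the Rado graph. Since $c \neq c_\ast$, the definition of $\chi$ gives $\chi(\{u,v\}) = c$ if and only if $\eta_\bF(u,v) = c$, so $E' = E$ and the claim follows.
\end{proof}
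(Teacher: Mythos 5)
Your proof is correct and follows the paper's own argument exactly: instantiate Proposition~\ref{C-coloured_G} with the two-element colour set $\{c, c_\ast\}$ and then apply Observation~\ref{*2=Rado}. The only difference is that you make explicit the intermediate step through Proposition~\ref{prop_*inf} (passing from $(F,\chi)\cong\Tc$ to the $(\ast_2)$-property), which the paper leaves implicit; this is a harmless elaboration, not a different route.
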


\begin{proof}
Fix a $c \in E(\bF) \setminus \{\bot_\bF\}$ and pick any $c_\ast \not\in E(\bF)$. Define
\[
\chi \colon [F]^2 \to \{c, c_\ast\},\, \{u, v\} \mapsto \begin{cases}
c& \text{if } \eta_\bF(u, v) = c,\\
c_\ast& \text{otherwise}.
\end{cases}
\]
By Proposition~\ref{C-coloured_G}, $(F, \chi) \cong \Tc$, with $C = \{c, c_\ast\}$. Consequently, by Observation~\ref{*2=Rado}  $(F, E)$  is then isomorphic to the Rado graph.
\end{proof}

\begin{THM}\label{prop_F-1isC-colHomUniGraph}
Fix $C \coloneqq  E(\bF) \setminus \{\bot_\bF\}$. Then, the $C$-coloured graph $(F, \chi)$ with  
\[
\chi \colon [F]^2 \to C,\quad \{u, v\} \mapsto  \eta_\bF(u, v)
\]
is isomorphic to $\Tc$.
\end{THM}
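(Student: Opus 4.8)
The plan is to apply Proposition~\ref{prop_*inf}. By Lemma~\ref{distQ} the colour set $C=E(\bF)\setminus\{\bot_\bF\}$ is countably infinite, so $\Tc$ is defined and it suffices to prove that $(F,\chi)$ has the $(\ast_\infty)$-property, that is, the $(\ast_k)$-property for every $k\in\NN^+$. Note that one cannot simply quote Proposition~\ref{C-coloured_G}: there the finitely many relevant colours were fixed in advance, whereas the $(\ast_k)$-property has to be verified for an \emph{arbitrary} tuple drawn from the infinite set $C$. Nonetheless the argument is a close variant of the proof of Proposition~\ref{C-coloured_G}, made slightly easier by the fact that here every colour is an honest $\sim_\bF$-class rather than a catch-all symbol.

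So I would fix $k\in\NN^+$, a tuple $(c_1,\dots,c_k)\in C^k$ and finite pairwise disjoint sets $U_1,\dots,U_k\subseteq F$; after merging blocks carrying equal colours and discarding empty ones, I may assume the $c_i$ are pairwise distinct and each $U_i\neq\emptyset$. Since $c_i\in E(\bF)=\faktor{F^2}{\sim_\bF}$, I can enlarge $\bigcup_i U_i$ to a finite set $U\subseteq F$ such that for every $i$ there is a pair $(a_i,b_i)\in U^2$ with $\eta_\bF(a_i,b_i)=c_i$. Let $\bU$ be the echeloned subspace of $\bF$ induced by $U$. Next I would build a one-point echeloned extension $\bV$ of $\bU$ on $V\coloneqq U\dotcup\{w\}$: take $E(\bV)$ to be $E(\bU)$ with a single fresh element adjoined as new maximum, keep $\eta_\bV=\eta_\bU$ on $U^2$, set $\eta_\bV(w,w)\coloneqq\bot$, declare $(w,u)\sim_\bV(u,w)\sim_\bV(a_i,b_i)$ for $u\in U_i$, send the remaining pairs through $w$ to the new maximum, and let $\leq_\bV$ be the total preorder pulled back along $\eta_\bV$ from the linear order on $E(\bV)$. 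One then checks against Definition~\ref{orgDEF_sim} that $\bV$ is an echeloned space having $\bU$ as a subspace (via Corollary~\ref{cor_sim_EMB}): axioms (i) and (iii) are immediate from the symmetric construction, and axiom (ii) holds because $\bU$ satisfies it and every off-diagonal pair through $w$ is sent to a non-bottom class --- here the hypothesis $c_i\neq\bot_\bF$ enters.

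Finally I would invoke universality together with weak homogeneity of $\bF$, applied to the inclusion $\bU\embedsto\bF$: this yields an embedding $h\colon\bV\embedsto\bF$ restricting to the identity on $U$. Setting $z\coloneqq h(w)$, injectivity of $h$ forces $z\notin U$, and for $u\in U_i$ we get $(z,u)=(h(w),h(u))\sim_\bF(h(a_i),h(b_i))=(a_i,b_i)$, whence $\chi(\{z,u\})=\eta_\bF(z,u)=\eta_\bF(a_i,b_i)=c_i$. This establishes the $(\ast_k)$-property for every $k$, hence the $(\ast_\infty)$-property, and Proposition~\ref{prop_*inf} then gives $(F,\chi)\cong\Tc$. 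I expect the only real work to be the bookkeeping of the second paragraph --- writing down $\bV$ and confirming it is a legitimate echeloned space --- together with the easy observation that each $c_i$ is realized by a pair inside a finite $U$; so I do not anticipate a genuine obstacle, only some care with the axioms of Definition~\ref{orgDEF_sim}.
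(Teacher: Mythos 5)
Your proof is correct and follows essentially the same route as the paper: verify the $(\ast_\infty)$-property via a one-point echeloned extension realized inside $\bF$ by universality and weak homogeneity, then conclude with Proposition~\ref{prop_*inf}. The only difference is organizational: contrary to your remark that Proposition~\ref{C-coloured_G} cannot simply be quoted, the paper does exactly that --- it applies Proposition~\ref{C-coloured_G} to the arbitrary colours $c_1,\dots,c_k$ arising in the $(\ast_k)$-check, padded with one further colour $c_{k+1}\in C$ in the catch-all role, and transfers the resulting witnesses to $(F,\chi)$ --- whereas you inline the same one-point-extension argument directly with the genuine $\sim_\bF$-classes; both versions work.
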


\begin{proof}
Fix a positive integer $k \geq 2$. Pick any $k$ colours from $C$ and enumerate them as $\{c_1, c_2, \dots, c_k\}$. Also, choose any $k$ pairwise disjoint finite sets of points $U_1, U_2, \dots, U_k$ from $F$.
Let $c_{k+1}\in C\setminus\{c_1,\dots, c_k\}$. Define $C'\coloneqq \{c_1,\dots,c_k,c_{k+1}\}$ and a colouring
\[
\chi_{C'} \colon [F]^2 \to C',\quad \{u,v\} \mapsto \begin{cases}
c_i& \text{if } \eta_\bF(u, v) = c_i, i \in \{1, 2, \dots, k\},\\
c_{k+1}& \text{otherwise}.
\end{cases}
\]
By Proposition~\ref{C-coloured_G}, $(F, \chi_{C'})$ is isomorphic to $\Tcc$. In particular, it enjoys the $(\ast_{k})$-property. Therefore, there exists a vertex $z \in F$ such that for any $i \in \{1, 2, \dots, k\}$, $\chi_{C'}(\{u, z\}) =\chi (\{u,z\})= c_i$ for all $u \in U_i$. Owing to the arbitrary choice of $U_i$'s, we get that $(F, \chi)$ itself has the $(\ast_k)$-property. Due to the arbitrary choice of $k$, $(F, \chi)$ has the $(\ast_\infty)$-property and so by Proposition~\ref{prop_*inf} it is indeed isomorphic to $\Tc$.
\end{proof}

From this point on, $\Tf$ will always stand for the $C$-coloured graph described in Theorem~\ref{prop_F-1isC-colHomUniGraph}. 

\begin{COR}
    Let $\bH$ be a finite echeloned space. Let $\bot_\bH<c_1<\cdots<c_k$ be an enumeration of $E(\bH)$. Then for all $d_1<\cdots<d_k\in E(\bF) \setminus \{\bot_\bF\}$ there exists an embedding $\iota\colon H\embedsto F$ such that $\hat{\iota}(c_i)=d_i$ for all $i\in \{1,\dots,k\}$.
\end{COR}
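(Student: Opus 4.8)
The plan is to reduce the statement to the universality of the $C$-coloured graph $\Tf$ furnished by Theorem~\ref{prop_F-1isC-colHomUniGraph}, where $C = E(\bF)\setminus\{\bot_\bF\}$. Bare universality of $\bF$ as an echeloned space produces \emph{some} embedding $\bH\embedsto\bF$ but gives no control over which elements of $E(\bF)$ are hit by the echeloning map; the universality of $\Tf$, being recorded colour by colour, does provide exactly this control once we transport $\bH$ into the language of $C$-coloured graphs along the order-embedding $c_i\mapsto d_i$.

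Concretely, first I would recolour $\bH$. For every $\{x,y\}\in[H]^2$ axiom~(ii) of Definition~\ref{orgDEF_sim} (applied to the subspace, say) gives $\eta_\bH(x,y)\neq\bot_\bH$, so $\eta_\bH(x,y)=c_{j}$ for a unique $j=j(\{x,y\})\in\{1,\dots,k\}$; set $\psi(\{x,y\})\coloneqq d_{j(\{x,y\})}$. Since $d_1<\dots<d_k$ all lie in $C$, the pair $(H,\psi)$ is a finite $C$-coloured graph. By Theorem~\ref{prop_F-1isC-colHomUniGraph} the $C$-coloured graph $\Tf=(F,\chi)$ with $\chi(\{u,v\})=\eta_\bF(u,v)$ is isomorphic to $\Tc$, hence universal for finite $C$-coloured graphs, so there is a homomorphism (\ie a colour-preserving injection) $\iota\colon (H,\psi)\to(F,\chi)$. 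By construction $\eta_\bF(\iota(x),\iota(y))=\chi(\{\iota(x),\iota(y)\})=\psi(\{x,y\})=d_{j(\{x,y\})}$ for all $\{x,y\}\in[H]^2$, while $\eta_\bF(\iota(x),\iota(x))=\bot_\bF$ and $\eta_\bH(x,x)=\bot_\bH$ for all $x\in H$.

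It remains to upgrade $\iota$ to an embedding of echeloned spaces with the prescribed action on echelonings, which is then routine. Define $\hat\iota\colon E(\bH)\to E(\bF)$ by $\hat\iota(\bot_\bH)\coloneqq\bot_\bF$ and $\hat\iota(c_i)\coloneqq d_i$ for $i\in\{1,\dots,k\}$. The computations of the previous paragraph show $\hat\iota\circ\eta_\bH=\eta_\bF\circ\iota^2$. Since $\bot_\bH<c_1<\dots<c_k$ in $E(\bH)$ and $\bot_\bF<d_1<\dots<d_k$ in $E(\bF)$ (the latter because each $d_i\in E(\bF)\setminus\{\bot_\bF\}$ lies strictly above the minimum $\bot_\bF$, and $d_1<\dots<d_k$ by hypothesis), the map $\hat\iota$ is an order-embedding. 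By Corollary~\ref{cor_sim_EMB}, $\iota$ is therefore an embedding $\bH\embedsto\bF$ whose induced map on echelonings is $\hat\iota$, and $\hat\iota(c_i)=d_i$ by construction. The main (in fact only) obstacle is conceptual rather than technical: one must notice that universality of $\bF$ alone is too weak and invoke the finer per-colour universality of $\Tf$ from Theorem~\ref{prop_F-1isC-colHomUniGraph}; one could equally rerun the weak-homogeneity argument of Proposition~\ref{C-coloured_G} with the sets $U_i$ taken to be singletons and the colours fixed as $d_1,\dots,d_k$ in advance, but routing through $\Tf$ is shorter.
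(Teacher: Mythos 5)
Your proposal is correct and follows essentially the same route as the paper: recolour $\bH$ as a finite $C$-coloured graph using the prescribed colours $d_1,\dots,d_k$, embed it into $\Tf$ via the universality from Theorem~\ref{prop_F-1isC-colHomUniGraph}, and observe that this induces an embedding of echeloned spaces with $\hat\iota(c_i)=d_i$. The only difference is that you spell out the final step (the verification via Corollary~\ref{cor_sim_EMB} that the colour-preserving injection is an echeloned-space embedding with the stated action on echelonings), which the paper leaves implicit.
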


\begin{proof}
    Let $C \coloneqq  E(\bF) \setminus \{\bot_\bF\}$. Let $\Gamma$ be the $C$-coloured graph $(H,\chi)$ with $\chi (\{u,v\})=d_i$ if $\eta_{\bH}(u,v)=c_i$. By the universality of $\Tf$ established in Theorem~\ref{prop_F-1isC-colHomUniGraph}, there exists an embedding $\kappa\colon \Gamma\embedsto \Tf$ of $C$-coloured graphs. It induces an embedding $\iota\colon  \bH\embedsto \bF$ with $\hat{\iota}(c_i)=d_i$.
\end{proof}
Below, we show that universal homogeneous $C$-coloured graphs, for countable $C$, can be constructed probabilistically. 

\begin{PROP}\label{randConst}
Let $C = \{c_1, c_2, \dots\}$ be a countable set of colours, let $V$ be a countably infinite set, and let  $p \in (0,1)$. Let $\chi\colon [V]^2\to C$ be a random colouring that assigns independently the colour $c_i$ with probability $(1-p)^{i-1}p$, \ie 
\[
\forall u,v,u\neq v\colon P[\chi (\{u,v\})=c_i]=(1-p)^{i-1}p.
\] 
Then, with probability 1, the graph $(V,\chi)$ is isomorphic to $\Tc$.
\end{PROP}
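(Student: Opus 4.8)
The plan is to invoke Proposition~\ref{prop_*inf}: since $C$ is countable and $V$ is countably infinite, it suffices to show that with probability $1$ the random $C$-coloured graph $(V,\chi)$ enjoys the $(\ast_\infty)$-property, for then it is homogeneous and universal for the class of finite $C$-coloured graphs, hence isomorphic to $\Tc$ by the uniqueness of the \Fraisse limit.

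First I would fix a single \emph{target configuration}: a positive integer $k$, a tuple of colours $(d_1,\dots,d_k)\in C^k$, and pairwise disjoint finite sets $U_1,\dots,U_k\subseteq V$. For a vertex $z\in V\setminus(U_1\cup\cdots\cup U_k)$, let $A_z$ be the event that $\chi(\{z,u\})=d_i$ for all $i\in\{1,\dots,k\}$ and all $u\in U_i$. Since distinct edges receive colours independently, and writing $d_i=c_{j_i}$ so that $P[\chi(\{z,u\})=d_i]=(1-p)^{j_i-1}p=:q_i\in(0,1)$, we get
\[
P[A_z]=\prod_{i=1}^k q_i^{\lvert U_i\rvert}=:q\in(0,1),
\]
a strictly positive number, the same for every admissible $z$. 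Moreover the family $(A_z)_{z\notin\bigcup_i U_i}$ is mutually independent: $A_z$ depends only on the colours of the edges $\{z,u\}$ with $u\in\bigcup_i U_i$, and for $z\ne z'$ both outside $\bigcup_i U_i$ these edge sets are disjoint. As $V$ is infinite there are infinitely many such $z$, whence
\[
P\bigl[\text{no }z\text{ witnesses }(k;d_1,\dots,d_k;U_1,\dots,U_k)\bigr]\le\prod_{z}(1-q)=0.
\]

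Next I would take a countable union. The set of target configurations is countable: $k$ ranges over $\NN^+$; for each $k$ there are only countably many colour tuples in $C^k$ since $C$ is countable; and there are only countably many finite tuples of pairwise disjoint finite subsets of the countable set $V$. Hence the union over all configurations of the null events ``this configuration has no witness'' is still null. On the complementary event, of probability $1$, every configuration admits a witness, i.e.\ $(V,\chi)$ has the $(\ast_k)$-property for every $k$, that is, the $(\ast_\infty)$-property; by Proposition~\ref{prop_*inf}, $(V,\chi)$ is then (almost surely) isomorphic to $\Tc$.

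There is no serious obstacle here; the only points requiring care are the mutual independence of the family $(A_z)_z$ (which rests on the disjointness, for distinct candidate vertices, of the sets of edges joining them to $\bigcup_i U_i$) and the observation that the space of target configurations is countable, so that a single countable union of null events suffices.
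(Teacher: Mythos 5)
Your proposal is correct and follows essentially the same route as the paper: compute the (constant, positive) probability that a fixed candidate vertex witnesses a given configuration, use independence of the edge colours for distinct candidates to conclude that almost surely some witness exists, and then take a countable union over all configurations before invoking Proposition~\ref{prop_*inf}. Your added care in restricting $z$ to lie outside $U_1\cup\cdots\cup U_k$ and in spelling out the mutual independence of the events $A_z$ is a welcome refinement of the paper's terser argument, but not a different method.
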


\begin{proof}
We show that $(V,\chi)$ has the $(\ast_\infty)$-property with probability $1$. Thus, we fix some integer $k \geq 2$.

Let $U_1,\dots,U_k$ be disjoint finite subsets of $V$. Let $c_{i_1},\dots,c_{i_k}\in C$. Take arbitrary $z\in V$. We compute the probability that $z$ is not eligible for the $(\ast_\infty)$-property, \ie
\[
P\left[\exists j\in \{1,\dots, k\}\; \exists u\in U_j\colon  \chi(\{u,z\})\neq c_{i_j}\right]=
 1 - (1-p)^{\sum\limits_{\ell = 1}^{k}({i_\ell}-1)|U_\ell|} \cdot p^{\sum\limits_{\ell = 1}^{k}|U_\ell|} <1.
\]
Since edges are coloured independently, we have 
\[
P[\forall z\in V\;\exists j\in \{1,\dots, k\}\; \exists u\in U_j\colon  \chi(\{u,z\})\neq c_{i_j}]=0.
\]
There are only countably many choices for $U_i$ and $c_{i_j}$, so 
\[
P[\text{$(\ast_\infty)$-property fails}]=0.
\]

Hence, our observed graph is isomorphic to $\Tc$ (with probability $1$).
\end{proof}

\section{The Ramsey property}\label{sec_Ramsey}

Let $\calK$ be a class of relational structures. For $\bA, \bB \in \calK$ denote by $\binom{\bB}{\bA}$ the set of all substructures of $\bB$ isomorphic to $\bA$. Then a class $\calC$ is a \emph{Ramsey class} if for every two structures $\bA \in \calC$ and $\bB \in \calC$ and every $k \in \NN^+$ there exists a structure $\bC \in \calC$ such that the following holds: For every partition of $\binom{\bC}{\bA}$ into $k$ classes there is $\tilde{\bB} \in \binom{\bC}{\bB}$ such that $\binom{\tilde{\bB}}{\bA}$ belongs to a single class of the partition. A countable relational structure $\calF$ has the \emph{Ramsey property} if $\Age(\calF)$ is a Ramsey class. 

Naturally, the question of whether or not the class of finite echeloned spaces is a Ramsey class arises. Homogeneous structures do not necessarily have the Ramsey property, the neccessary condition for the latter is that the corresponding automorphism group preserves a linear order.
In fact, often times this condition is the only obstruction, as will be the case here.

We approach this problem with the tools of topological dynamics. Let $G$ be a \emph{topological group}, \ie a group equipped with a topology  with respect to which both, multiplication and inverse map are continuous functions. A \emph{$G$-flow} is a continuous action $G \times X \to X$, where $X$ is a nonempty compact Hausdorff space. We say that $G$ is \emph{extremely amenable} if every $G$-flow has a fixed point.  The connection between extreme amenability and the Ramsey property is made precise in the following theorem. For its statement we will need to recall that a structure $\bA$ whose signature contains a distinguished binary relation symbol $\preceq$ is said to be \emph{ordered} if  $\preceq_\bA$ is a linear order.

\begin{THM}[Kechris, Pestov, Todor\v cevi\' c~\cite{kechris2003fraisse}]\label{thm_KPT_EA=RP}
Let $\calK$ be a \Fraisse class of ordered structures and let $\bA$ be its \Fraisse limit. Then $\Aut(\bA)$ is extremely amenable if and only if $\calK$ has the Ramsey property.
\end{THM}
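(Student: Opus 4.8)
This is the Kechris--Pestov--\Todorcevic correspondence; I would route the proof through a finitary reformulation of extreme amenability of $G\coloneqq\Aut(\bA)$, where $\bA$ is the \Fraisse limit and $\calK\coloneqq\Age(\bA)$. Recall that $G$ is a closed subgroup of the symmetric group on the countable domain of $\bA$, that by homogeneity $G$ acts transitively on the copies $\binom{\bA}{\bB}$ and on the embeddings $\operatorname{Emb}(\bB,\bA)$ for every finite $\bB\leq\bA$, and that by universality every member of $\calK$ embeds into $\bA$. Crucially, since the signature contains $\preceq$ and every structure of $\calK$ is linearly ordered, every structure in $\calK$ is \emph{rigid}, so ``copy'' and ``embedding'' carry the same data, and the Ramsey property of $\calK$ (stated for copies $\binom{\cdot}{\cdot}$) coincides with its embedding version. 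This rigidity is precisely what makes the theorem hold: for the class of finite pure sets the \Fraisse limit is a bare countable set, that class is Ramsey (finite Ramsey theorem), yet its automorphism group is the full symmetric group, which is \emph{not} extremely amenable --- the order hypothesis rules this out. The tool I would invoke is the computation of the greatest ambit of a closed subgroup of the infinite symmetric group, which yields the equivalence: $G$ is extremely amenable if and only if $\bA$ has the \emph{embedding-Ramsey property}, i.e.\ for all finite $\bB_0\leq\bB_1$ in $\calK$, all $k\in\NN^+$ and all colourings $c\colon\operatorname{Emb}(\bB_0,\bA)\to\{1,\dots,k\}$ there is $\rho\in\operatorname{Emb}(\bB_1,\bA)$ with $c$ constant on $\{\rho\circ\sigma\mid\sigma\in\operatorname{Emb}(\bB_0,\bB_1)\}$.

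\emph{Ramsey $\Rightarrow$ extreme amenability.} It suffices to verify the embedding-Ramsey property of $\bA$. Fix $\bB_0\leq\bB_1$ in $\calK$, $k$ and $c$. By the Ramsey property of $\calK$ choose $\bC\in\calK$ such that every $k$-colouring of $\binom{\bC}{\bB_0}$ admits $\tilde\bB_1\in\binom{\bC}{\bB_1}$ with $\binom{\tilde\bB_1}{\bB_0}$ monochromatic, and by universality fix $\iota\colon\bC\embedsto\bA$. Pulling $c$ back along $\iota$ gives a $k$-colouring of $\operatorname{Emb}(\bB_0,\bC)$ (equivalently of $\binom{\bC}{\bB_0}$, by rigidity), so the partition relation for $\bC$ provides $\rho_0\in\operatorname{Emb}(\bB_1,\bC)$ on which it is constant over all factorisations through $\bB_0$; then $\rho\coloneqq\iota\circ\rho_0$ works. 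This step is just a routine transfer of the finite Ramsey statement up to $\bA$ via universality.

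\emph{Extreme amenability $\Rightarrow$ Ramsey.} I argue the contrapositive and exhibit a $G$-flow with no fixed point. Assume $\calK$ is not Ramsey: there are $\bB_0\leq\bB_1$ in $\calK$ and $k\in\NN^+$ such that every $\bC\in\calK$ admits a colouring $\binom{\bC}{\bB_0}\to k$ with no monochromatic copy of $\bB_1$. Write $\bA=\bigcup_n\bC_n$ with $\bC_0\leq\bC_1\leq\cdots$ finite (hence in $\calK$); since the restriction of a bad colouring stays bad, the sets $K_n$ of colourings of $\binom{\bA}{\bB_0}$ whose restriction to $\binom{\bC_n}{\bB_0}$ has no monochromatic $\bB_1$-copy are closed, nonempty and nested in the compact space $k^{\binom{\bA}{\bB_0}}$, so compactness yields $c^\ast\in\bigcap_n K_n$; as every finite substructure of $\bA$ sits inside some $\bC_n$, the colouring $c^\ast$ has no monochromatic copy of $\bB_1$ anywhere in $\bA$. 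Now let $G$ act on $k^{\binom{\bA}{\bB_0}}$ by $(g\cdot c)(P)\coloneqq c(g^{-1}(P))$; this is continuous because $\bB_0$ is finite (so $g\mapsto g^{-1}(P)$ is locally constant for each $P$), and its only fixed points are the constant colourings, as $G$ is transitive on $\binom{\bA}{\bB_0}$. Put $X\coloneqq\overline{G\cdot c^\ast}$, a subflow. If $G$ were extremely amenable, $X$ would have a fixed point, necessarily a constant colouring $\equiv j$ lying in $\overline{G\cdot c^\ast}$; applying the definition of this closure to the finite set $P$ of all $\bB_0$-copies inside a fixed copy $\bB_1'$ of $\bB_1$ gives $g\in G$ with $c^\ast\equiv j$ on $g^{-1}(P)=\binom{g^{-1}(\bB_1')}{\bB_0}$ --- so $g^{-1}(\bB_1')$ is a $c^\ast$-monochromatic copy of $\bB_1$, a contradiction. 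Hence $X$ has no fixed point and $G$ is not extremely amenable.

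\emph{Where the work is.} The substantive ingredient is the finitary equivalence invoked above, namely that extreme amenability of $\Aut(\bA)$ is the embedding-Ramsey property of $\bA$. Its easy direction (embedding-Ramsey $\Leftarrow$ extreme amenability) is essentially the orbit-closure argument just given; the other direction needs the greatest-ambit machinery: given a minimal subflow $Y$ of an arbitrary $G$-flow with $|Y|>1$, separate two of its points by a continuous $f$, note that $g\mapsto f(g\cdot y_0)$ is right-uniformly continuous and hence --- $G$ being non-archimedean --- factors approximately through $G/V$ for the pointwise stabiliser $V$ of a finite tuple, i.e.\ through $\operatorname{Emb}(\bB_0,\bA)$ for some finite $\bB_0\leq\bA$; quantising the range of $f$ turns this into a finite colouring to which embedding-Ramsey applies, forcing $f$ to be almost constant along $\overline{G\cdot y_0}=Y$ and contradicting that $f$ separates two of its points. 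Getting the left/right-uniformity bookkeeping and the passage ``copies $\leftrightarrow$ embeddings'' (which is exactly where the order hypothesis is genuinely used) correct is the delicate part; everything else reduces to compactness and the transfer argument above.
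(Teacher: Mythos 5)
The paper does not prove this statement at all: it is quoted verbatim from Kechris--Pestov--\Todorcevic~\cite{kechris2003fraisse}, so there is no internal proof to compare yours against; what you have written is a reconstruction of the original KPT argument, and as such it is essentially faithful and, in outline, correct. Your direction ``extreme amenability $\Rightarrow$ Ramsey'' is complete as stated: the compactness argument producing a colouring of $\binom{\bA}{\bB_0}$ with no monochromatic copy of $\bB_1$, the continuous action of $\Aut(\bA)$ on the compact space of colourings, and transitivity on copies (from homogeneity) forcing any fixed point to be a constant colouring all check out, and you are right that this half does not need the order hypothesis. For ``Ramsey $\Rightarrow$ extreme amenability'' you correctly transfer the finite Ramsey property to the embedding-Ramsey property of $\bA$ via universality, and you locate the role of the order hypothesis exactly where it belongs (rigidity of finite ordered structures making copies and embeddings interchangeable, with the pure-set example showing the hypothesis cannot be dropped). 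The one caveat is that you then invoke, rather than prove, the finitary characterisation of extreme amenability of a closed subgroup $G\leq\Sym(\NN)$ --- that $G$ is extremely amenable iff the coset spaces $G/V\cong\operatorname{Emb}(\bB_0,\bA)$ are finitely oscillation stable, equivalently iff $\bA$ has the embedding-Ramsey property --- and only sketch its hard direction via the greatest ambit and right-uniform continuity. That characterisation is the substantive content of the KPT theorem, so as a self-contained proof your text has a deliberate gap precisely there; as an account of the strategy of the cited proof it is accurate, and the details you flag (the left/right uniformity bookkeeping and the copies-versus-embeddings translation) are indeed the only delicate points remaining.
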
 

Now, we begin our exposition by observing that the key feature of a \Fraisse class, namely the amalgamation property, holds for our class of interest.   

\begin{PROP}
The classes of ordered finite echeloned spaces and of ordered finite $C$-coloured graphs (for countable $C$) are \Fraisse classes.
\end{PROP}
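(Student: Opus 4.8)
The plan is to reduce the statement to facts already in hand: that finite echeloned spaces form a \Fraisse class (Proposition~\ref{thm_SIMisFraisse}) and that finite $C$-coloured graphs (for countable $C$) form a \Fraisse class, and to combine each with the fact that finite linear orders form a \Fraisse class. The standard tool here is the following general principle: if $\calK_1$ is a \Fraisse class in a relational signature $\sigma_1$ and $\calK_2$ a \Fraisse class in a disjoint relational signature $\sigma_2$, and if at least one of them has the \emph{strong} amalgamation property, then the class of $(\sigma_1\cup\sigma_2)$-structures whose $\sigma_1$-reduct lies in $\calK_1$ and whose $\sigma_2$-reduct lies in $\calK_2$ is again a \Fraisse class (this is sometimes called a ``free superposition'' or ``free amalgam of \Fraisse classes''; see e.g.\ \cite{kechris2003fraisse}). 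Here $\calK_2$ is the class of finite linear orders, which has strong amalgamation, so the principle applies.

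First I would spell out what the two classes in the statement actually are. An ordered finite echeloned space is a structure $(X,\leq_\bX,\preceq)$ where $(X,\leq_\bX)$ is a finite echeloned space (equivalently a $\{\lesssim\}$-structure as in Remark~\ref{rem_echelon}) and $\preceq$ is a linear order on $X$; an ordered finite $C$-coloured graph is $(V,(\varrho_c)_{c\in C},\preceq)$ with $(V,(\varrho_c)_{c\in C})$ a finite $C$-coloured graph and $\preceq$ a linear order. Both are classes of finite structures over a finite (resp.\ countable) relational signature, so there are only countably many isomorphism types. $\HP$ is immediate: a substructure of an ordered echeloned space is an ordered echeloned space (the echelon axioms are universal and $\preceq$ restricts to a linear order), and likewise for coloured graphs. $\JEP$ follows from $\AP$ once we note each class contains a one-element structure that embeds into every member.

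The only real content is $\AP$. Given ordered structures $\bA\hookrightarrow\bB_1$, $\bA\hookrightarrow\bB_2$, forget the linear orders and amalgamate the echeloned (resp.\ coloured-graph) reducts using Proposition~\ref{prop_SIMhasAP} (resp.\ the amalgamation of finite $C$-coloured graphs noted before Proposition~\ref{prop_*inf}); crucially, Remark~\ref{echelonhasSAP} tells us finite echeloned spaces have \emph{strong} amalgamation, so we may take the amalgam $\bC$ to have underlying set $B_1\cup B_2$ with $B_1\cap B_2=A$ (and the coloured-graph amalgam is also naturally strong). Separately amalgamate the linear orders $(B_1,\preceq_1)$ and $(B_2,\preceq_2)$ over $(A,\preceq_A)$ inside the \Fraisse class of finite linear orders, obtaining a linear order on the same set $B_1\cup B_2$ extending both. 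Superimposing the two structures on $B_1\cup B_2$ gives an ordered echeloned space (resp.\ ordered $C$-coloured graph) into which $\bB_1$ and $\bB_2$ embed compatibly over $\bA$, because embeddings in the combined signature are exactly maps that embed each reduct.

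I expect the main obstacle to be purely expository: stating the superposition lemma cleanly and checking that ``embedding in the combined signature $=$ embedding of each reduct'' so that the two amalgams genuinely glue. One should also double-check that the strong amalgam $\bC$ from Proposition~\ref{prop_SIMhasAP} really can be arranged so that $B_1\cap B_2 = A$ inside $C$ (the proof there builds $C = B_1\dotcup(B_2\setminus f_2[A])$, which is exactly a strong amalgam over $A$, so this is fine). No delicate estimates or constructions are needed; the proof is a routine invocation of known \Fraisse-theoretic machinery.
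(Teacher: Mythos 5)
Your proposal is correct and follows essentially the same route as the paper, whose proof is a one-line appeal to the fact that finite echeloned spaces, finite $C$-coloured graphs, and finite linear orders are all \Fraisse classes with the \emph{strong} amalgamation property, so that superposing with a linear order again yields a \Fraisse class (the paper cites Cameron, Section 3.9, for this). One caveat: the general principle as you state it --- that it suffices for \emph{at least one} of the two classes to have strong amalgamation --- is false in general (if the other class only has ordinary amalgamation, its amalgam may be forced to identify points in a way incompatible with the strong amalgam of the first reduct); your argument is nevertheless fine here because the explicit step ``amalgamate the linear orders on the same set $B_1\cup B_2$'' is exactly strong amalgamation of finite linear orders, which does hold, so in effect you use strong amalgamation of both factors, just as the paper does.
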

\begin{proof}
The classes of finite echeloned spaces, finite $C$-colored graphs, and finite linear orders are all \Fraisse classes with the strong amalgamation property (for the case of finite echeloned spaces, see  Remark~\ref{echelonhasSAP}). This immediately implies the result (see \cite[Section 3.9, p.\ 59]{Cam1990}). 
\end{proof}

 The \Fraisse limit of the class of finite ordered echeloned spaces is actually obtained from $\bF$ by the addition of an appropriate linear order $\preceq_\bF$ isomorphic to the natural order on $\QQ$. We will denote this \Fraisse limit by $(\bF, \preceq_\bF)$. 

\begin{LEM}\label{lem_orderedversion}
 Let $C\coloneqq E(\bF)\setminus \{ \bot_\bF\}$. Then $(\Tf,\preceq_\bF)$ is a universal homogeneous ordered $C$-coloured graph.
\end{LEM}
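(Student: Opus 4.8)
The plan is to recognize $(\Tf,\preceq_\bF)$ as a \Fraisse limit by verifying universality and homogeneity directly, exploiting the fact that both $\bF$ and the underlying order are maximally flexible. First I would recall that $(\bF,\preceq_\bF)$ is already known (by the discussion preceding the lemma) to be the \Fraisse limit of the class of finite ordered echeloned spaces, and that by Theorem~\ref{prop_F-1isC-colHomUniGraph} the $C$-coloured graph $\Tf=(F,\chi)$ with $\chi(\{u,v\})=\eta_\bF(u,v)$ is isomorphic to $\Tc$. So the content of the lemma is that adding the order $\preceq_\bF$ to $\Tf$ yields the universal homogeneous \emph{ordered} $C$-coloured graph, \ie that $(\Tf,\preceq_\bF)$ is the \Fraisse limit of finite ordered $C$-coloured graphs. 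By uniqueness of \Fraisse limits, it suffices to show that $(\Tf,\preceq_\bF)$ is countable, universal, and weakly homogeneous for that class.

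For \textbf{universality}: given a finite ordered $C$-coloured graph $(\Delta,\preceq_\Delta)$ with $\Delta=(W,\eta)$, I first note that only finitely many colours from $C$ occur in $\Delta$; call them $c_1,\dots,c_m$ together with enough extra colours so that Proposition~\ref{C-coloured_G} applies. That proposition (or rather Theorem~\ref{prop_F-1isC-colHomUniGraph} itself) gives an embedding of the colour-reduct $\Delta$ into $\Tf$. It then remains to adjust the embedding so that it also respects the orders. Here I would invoke the standard fact that $\preceq_\bF$ is a countable dense linear order without endpoints that is moreover ``independent'' of the echeloned structure --- more precisely, the \Fraisse limit $(\bF,\preceq_\bF)$ of ordered finite echeloned spaces realizes every consistent configuration of an echeloned space together with a compatible linear order. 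So one embeds $(\Delta,\preceq_\Delta)$ as an ordered echeloned space into $(\bF,\preceq_\bF)$ --- which is possible since any finite $C$-coloured graph with colours among $C=E(\bF)\setminus\{\bot_\bF\}$ is the colour-graph of a finite echeloned space, and any linear order on its vertex set is compatible --- and then reads off the induced embedding of ordered $C$-coloured graphs. For \textbf{weak homogeneity}: given finite ordered $C$-coloured graphs $(\bA,\preceq_\bA)\le(\bB,\preceq_\bB)$ and an embedding $f\colon(\bA,\preceq_\bA)\embedsto(\Tf,\preceq_\bF)$, I realize $\bA$ as an echeloned space via $\eta_\bF\circ f^2$, extend to an echeloned structure on $\bB$ realizing the colour pattern of $\bB$ together with the order $\preceq_\bB$ (using again that $C$ is exactly $E(\bF)\setminus\{\bot_\bF\}$, so no new colour problems arise, and that the rational order is dense without endpoints so $\preceq_\bB$ extends $\preceq_\bA$ into it), and then apply weak homogeneity of $(\bF,\preceq_\bF)$ to obtain the required extension $g\colon(\bB,\preceq_\bB)\embedsto(\Tf,\preceq_\bF)$ with $g\restr A=f$; this $g$ is automatically an embedding of ordered $C$-coloured graphs.

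The main obstacle I expect is bookkeeping the interplay between ``$C$-coloured graph'' morphisms (which only see the partition of edges by colour) and ``echeloned space'' morphisms (which must respect the total order $\leq_{E(\bF)}$ on colours): an embedding of $C$-coloured graphs need not be an embedding of echeloned spaces, but it \emph{is} one when the colour set is literally $C=E(\bF)\setminus\{\bot_\bF\}$ with its given order and the colouring is $\eta_\bF$ --- the point is that we are using $\Tf$'s identification with $\Tc$ only to \emph{find} vertices, while the echeloned-space amalgamation/extension steps are carried out inside $(\bF,\preceq_\bF)$ where the colour order is automatically honoured. Once this is set up carefully, the verification is a routine translation: both universality and weak homogeneity of $(\Tf,\preceq_\bF)$ reduce to the already-established universality and weak homogeneity of the \Fraisse limit $(\bF,\preceq_\bF)$. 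Countability is immediate since $F$ is countable, and the conclusion follows by the uniqueness of \Fraisse limits.
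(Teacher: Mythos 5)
There is a genuine gap, and it sits at the exact point you flag as "routine translation": an embedding supplied by the universality or weak homogeneity of $(\bF,\preceq_\bF)$ as an ordered \emph{echeloned space} only preserves the relative order of echelon classes. By Corollary~\ref{cor_sim_EMB}, such an embedding $g$ comes with \emph{some} order embedding $\hat{g}\colon E(\bB)\embedsto E(\bF)$, but nothing forces $\hat{g}$ to be the identity on the prescribed colours; an embedding of $C$-coloured graphs, by contrast, must send an edge of colour $c\in C$ to an edge $\{u,v\}$ of $\Tf$ with $\eta_\bF(u,v)=c$ exactly. In your weak-homogeneity step, colours of $\bB$ that already occur in $\bA$ are indeed pinned down (the new pair is $\sim$-equivalent to a pair inside $A$, whose $\eta_\bF$-value is correct because $f$ preserves colours), but a colour occurring in $\bB$ and not in $\bA$ is sent by $\hat{g}$ to an arbitrary order-compatible element of $E(\bF)$, so the resulting $g$ is in general \emph{not} a $C$-coloured graph embedding; the closing claim that it is "automatically" one is exactly what fails. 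The same defect breaks your universality step: embedding $(\Delta,\preceq_\Delta)$ into $(\bF,\preceq_\bF)$ as an ordered echeloned space loses control of which elements of $E(\bF)$ realize the colours of $\Delta$. The repair is the witness trick the paper uses in Proposition~\ref{C-coloured_G} and Lemma~\ref{lem_AutOfFs}: first adjoin to the base finitely many vertices of $\Tf$ whose edges realize every colour occurring in the extension, enlarge the extension compatibly so that each new pair is declared $\sim$-equivalent to the corresponding witness pair (and extend the linear order arbitrarily), and only then invoke weak homogeneity of $(\bF,\preceq_\bF)$; with the witnesses present the colours are forced, and your argument goes through.

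It is also worth noting that the paper's own proof sidesteps the pinning problem by arguing in the opposite direction: it starts from a countable homogeneous ordered $C$-coloured graph $(H,\chi_\bH,\preceq_\bH)$ (the \Fraisse limit of finite ordered $C$-coloured graphs), shows that the ordered echeloned space it induces is universal and weakly homogeneous --- in that direction one is free to \emph{choose} a colouring $\tilde\kappa\colon E(\bB)\setminus\{\bot_\bB\}\embedsto C$ extending the given one on $\bA$, using $C\cong\QQ$, so no colour needs to be hit exactly --- concludes by uniqueness of \Fraisse limits that this space is isomorphic to $(\bF,\preceq_\bF)$, and then transports the colouring back to identify $(\Tf,\preceq_\bF)$ with $(H,\chi_\bH,\preceq_\bH)$ up to renaming colours. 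Your direction is salvageable with the witness construction, but as written the key step does not hold.
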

\begin{proof}
    Let $(\bH,\preceq_\bH)$ be the ordered echeloned space whose echelon $\leq_\bH$ is defined by 
    \[
    (x,y)\leq_\bH (u,v) \quad :\Longleftrightarrow \quad  \chi_\bH(\{x,y\})\leq \chi_\bH(\{u,v\}),    
    \]
    from the edge coloring $\chi_\bH\colon[H]^2\to C$ of a countable homogeneous ordered $C$-coloured graph $(H,\chi_\bH,\preceq_\bH)$.
    We aim to show that $(\bH, \preceq_\bH)$ is a universal homogeneous ordered echeloned space. 
    To this end it suffices to show that $(\bH,\preceq_\bH)$ is universal and weakly homogeneous. Let $(\bA,\preceq_\bA)$ be a finite ordered echeloned subspace of $(\bH,\preceq_\bH)$, let $(\bB,\preceq_\bB)$ be a finite ordered echeloned space, and let $\iota\colon (\bA,\preceq_\bA)\embedsto(\bB,\preceq_\bB)$ be an embedding. Note that $\kappa\colon E(\bA)\setminus\{\bot_\bA\}\to C,\, [(x,y)]_{\sim_\bA}\mapsto \chi_\bH(\{x,y\})$ is an order embedding. The same holds for $\hat\iota\colon E(\bA)\embedsto E(\bB)$. Let $\tilde\iota\colon E(\bA)\setminus\{\bot_\bA\}\embedsto E(\bB)\setminus\{\bot_\bB\}$ be the appropriate restriction of $\hat\iota$. Recall that $C$ is isomorphic to $\QQ$. In other words, it is a universal homogeneous chain. Thus, there exists an order embedding $\tilde\kappa\colon E(\bB)\setminus\{\bot_\bB\}\embedsto C$ that makes the following diagram commutative:  
    \[
    \begin{tikzcd}
        E(\bA)\setminus\{\bot_\bA\} \arrow[r, hook, "\tilde\iota"] \arrow[dr, hook, "\kappa"'] &  E(\bB)\setminus\{\bot_\bB\}\arrow[d, dashed, hook, "\tilde\kappa"]\\
        & C.
    \end{tikzcd}
    \]
    Next we define 
    \begin{align*}
        \chi_\bA&\colon [A]^2\to C,  & \{x,y\} &\mapsto \kappa(\eta_\bA(x,y)) = \chi_\bH(\{x,y\}),\\
        \chi_\bB&\colon [B]^2\to C, & \{u,v\}&\mapsto \tilde\kappa(\eta_\bB(u,v)).
    \end{align*}
    Then $\iota\colon (A,\chi_\bA,\preceq_\bA)\to(B,\chi_\bB,\preceq_\bB)$ is an embedding of ordered $C$-coloured graphs. Indeed, $\iota$ preserves $\preceq$, and
    \begin{multline*}
    \forall \{x,y\}\in[A]^2\,:\, \chi_\bB(\{\iota(x),\iota(y)\}) = \tilde\kappa(\eta_\bB(\iota(x),\iota(y)))\\
    = \tilde\kappa(\tilde\iota(\eta_\bA(x,y)))
    = \kappa(\eta_\bA(x,y)) = \chi_\bA(\{x,y\}).
    \end{multline*}
    Next note that $(A,\chi_\bA,\preceq_\bA)$ is an ordered $C$-coloured subgraph of $(H,\chi_\bH,\preceq_\bH)$. Indeed, $A\subseteq H$, ${\preceq_\bA}\subseteq {\preceq_\bH}$, and 
    \[
    \forall\{x,y\}\in[A]^2\,:\, \chi_\bA(\{x,y\}) = \kappa(\eta_\bH(x,y)) = \chi_\bH(\{x,y\}),
    \]
    by the definition of $\kappa$. 

    Since $(H,\chi_\bH,\preceq_\bH)$ is universal and weakly homogeneous, there exists $\epsilon\colon (B,\chi_\bB,\preceq_\bB)\embedsto(H,\chi_\bH,\preceq_\bH)$, such that the following diagram commutes:
    \begin{equation}\label{***}
    \begin{tikzcd}
        (A,\chi_\bA,\preceq_\bA) \arrow[r, hook, "\iota"] \arrow[dr, hook, "="'] &  (B,\chi_\bB,\preceq_\bB)\arrow[d, dashed, hook, "\epsilon"]\\
        & (H,\chi_\bH,\preceq_\bH).
    \end{tikzcd}
    \end{equation}
    We claim that $\epsilon\colon(\bB,\preceq_\bB)\to(\bH,\preceq_\bH)$ is an embedding. Clearly, $\epsilon\colon(B,\preceq_\bB)\embedsto(H,\preceq_\bH)$ is an order embedding. 
    So let $(x,y)\leq_\bB(u,v)$. 
    If $x=y$, then $\epsilon(x)=\epsilon(y)$, and thus $(\epsilon(x),\epsilon(y))\leq_\bH(\epsilon(u),\epsilon(v))$. If on the other hand $x\neq y$ then  we compute 
    \[\arraycolsep=1.4pt\def\arraystretch{1.3}
    \begin{array}{rclcclc}
        (x,y) & \leq_\bB & (u,v) & \iff & \eta_\bB(x,y) & \leq & \eta_\bB(u,v) \\
        & & & \overset{\tilde\kappa\text{ emb.}}{\iff} 
        & \tilde\kappa(\eta_\bB(x,y)) & \leq & \tilde\kappa(\eta_\bB(u,v))\\
        & & & & \hfill \rotatebox{90}{=} \hfill  & & \hfill \rotatebox{90}{=} \hfill \\
        & & & \iff 
        & \chi_\bB(\{x,y\}) & \leq & \chi_\bB(\{u,v\})\\
        & & & & \hfill \rotatebox{90}{=} \hfill  & & \hfill \rotatebox{90}{=} \hfill \\
        & & & \overset{\epsilon\text{ emb.}}{\iff} 
        & \chi_\bH(\{\epsilon(x),\epsilon(y)\}) & \leq & \chi_\bH(\{\epsilon(u),\epsilon(v)\})\\
        & & & \overset{\text{def.}}{\iff} & (\epsilon(x),\epsilon(y)) & \leq_\bH &  (\epsilon(u),\epsilon(v)).        
    \end{array}
    \]
    From \eqref{***} we obtain that 
        \begin{equation*} %\label{**}
    \begin{tikzcd}
        (\bA,\preceq_\bA) \arrow[r, hook, "\iota"] \arrow[dr, hook, "="'] &  (\bB,\preceq_\bB)\arrow[d, dashed, hook, "\epsilon"]\\
        & (\bH,\preceq_\bH)
    \end{tikzcd}
    \end{equation*}
 commutes. This shows that $(\bH,\preceq_\bH)$ is universal and weakly homogeneous. 

    By the uniqueness of \Fraisse limits there exists an isomorphism $\phi\colon(\bF,\preceq_\bF)\to(\bH,\preceq_\bH)$. Since 
    \[
    \begin{tikzcd}
        F^2 \arrow[r,"\eta_\bF"] \arrow[d,"\phi^2"']& E(\bF)\arrow[d,"\hat\phi"]\\
        H^2 \arrow[r,"\eta_\bH"]& E(\bH)
    \end{tikzcd}
    \]
    commutes, by the definition of $\chi_\bF$ and $\chi_\bH$, also the following diagram commutes:
    \[
    \begin{tikzcd}
        {[F]}^2 \arrow[r,"\chi_\bF"] \arrow[d,"{[\phi]}^2"']& C\arrow[d,"\tilde\phi"]\\
        {[H]}^2 \arrow[r,"\chi_\bH"]& C,
    \end{tikzcd}
    \]
    where $[\phi]^2\colon\{x,y\}\mapsto\{\phi(x),\phi(y)\}$, and where $\tilde\phi$ is the appropriate restriction of $\hat\phi$ to $C$. 

    However, this means that $(\Tf,\preceq_\bF)$ and $(H,\chi_\bH,\preceq_\bH)$ are practically equal, up to names of vertices and names of colours. In other words, $(\Tf,\preceq_\bF)$ is a universal homogeneous $C$-coloured graph, too. 
\end{proof}

\begin{LEM} \label{kernelextramen}
    $\Aut(\Tf, \preceq_\bF)$ is extremely amenable.
\end{LEM}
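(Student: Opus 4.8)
The plan is to deduce extreme amenability of $\Aut(\Tf, \preceq_\bF)$ from the Kechris–Pestov–\Todorcevic correspondence (Theorem~\ref{thm_KPT_EA=RP}) by establishing that the class of finite ordered $C$-coloured graphs (for $C$ countable) is a Ramsey class, and then identifying $\Tf$ with the \Fraisse limit of that class via Lemma~\ref{lem_orderedversion}. First I would invoke Lemma~\ref{lem_orderedversion}: it tells us that $(\Tf, \preceq_\bF)$ is (isomorphic to) the universal homogeneous ordered $C$-coloured graph, hence the \Fraisse limit of the \Fraisse class $\calK$ of all finite ordered $C$-coloured graphs. By Theorem~\ref{thm_KPT_EA=RP}, $\Aut(\Tf, \preceq_\bF)$ is extremely amenable if and only if $\calK$ has the Ramsey property. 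So the whole statement reduces to: \emph{the class of finite ordered $C$-coloured graphs has the Ramsey property.}

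For the Ramsey statement, the key input is the combinatorial result of \Hubicka and \Nesetril \cite{hubivcka2019all}, which establishes Ramsey classes for a very general family of structures described by forbidden homomorphic images; in particular finite ordered (complete) edge-coloured graphs over a finite colour set fall under their framework, since such graphs are exactly the relational structures over the signature $\{\varrho_c \mid c\in C\}\cup\{\preceq\}$ satisfying the finitely many axioms ``exactly one $\varrho_c$ holds on each ordered pair of distinct points'' and ``$\preceq$ is a linear order'' — a condition expressible by forbidding finitely many small structures. For \emph{countably infinite} $C$, I would reduce to the finite-colour case by a standard compactness/finitization argument: given finite ordered $C$-coloured graphs $\bA, \bB$, only finitely many colours appear in $\bB$, so one may work inside the class of finite ordered $C_0$-coloured graphs for a finite subset $C_0\subseteq C$ containing all colours of $\bB$, obtain a Ramsey witness $\bC$ there, and observe that $\bC$ is also a witness in the larger class (any colouring of $\binom{\bC}{\bA}$ is handled identically, and copies of $\bB$ inside $\bC$ use only colours from $C_0$). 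Thus the countable case follows formally from the finite-colour case.

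The main obstacle — or rather the main point requiring care — is verifying that the \Hubicka–\Nesetril machinery genuinely applies to ordered complete edge-coloured graphs in the precise form needed, i.e.\ that the class is closed under the right notion of substructure, that the forbidden configurations are finite, and that adding the linear order $\preceq$ does not destroy the Ramsey property (it does not: linear orders are ``Ramsey-neutral'' and the combined signature still fits the framework). A cleaner alternative, which I would mention, is to note that finite ordered edge-coloured complete graphs over a finite colour set are a well-known Ramsey class — this is essentially the Ne\v{s}et\v{r}il–R\"odl theorem for the class of all finite ordered relational structures omitting a set of irreducible structures, and the ``partite construction'' or the iterated product Ramsey argument applies directly. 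Either way, once the finite-colour Ramsey property is in hand, the countable-$C$ case, the identification via Lemma~\ref{lem_orderedversion}, and the appeal to Theorem~\ref{thm_KPT_EA=RP} are all routine, giving extreme amenability of $\Aut(\Tf, \preceq_\bF)$.
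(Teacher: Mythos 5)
Your overall architecture is the same as the paper's: identify $(\Tf,\preceq_\bF)$ with the \Fraisse limit of the class of finite ordered $C$-coloured graphs via Lemma~\ref{lem_orderedversion}, invoke Theorem~\ref{thm_KPT_EA=RP}, and reduce everything to the Ramsey property of that class, to be deduced from \Hubicka--\Nesetril. The gap lies in how you justify that Ramsey property. A $C$-coloured graph here is \emph{complete}: every pair of distinct vertices carries exactly one colour. The axiom ``at least one $\varrho_c$ holds on each pair of distinct points'' is the prohibition of a two-vertex structure with \emph{no} relation between its points, which is a disconnected, hence non-irreducible, structure. Consequently the class is not of the form ``all finite ordered structures omitting a set of irreducible substructures'', and it does not have free amalgamation (amalgamating two complete coloured graphs over a common subgraph forces a choice of colours on the cross pairs). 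So neither the \Nesetril--R\"odl theorem in the form you quote nor the free-amalgamation corollary of \cite{hubivcka2019all} applies directly to the class as you describe it; the claim that it ``falls under their framework because it is cut out by finitely many forbidden small structures'' is precisely the step that fails as stated. (The general \Hubicka--\Nesetril machinery for strong-amalgamation classes with completions could be made to work --- the completion being ``assign an arbitrary colour to uncoloured pairs'' --- but that verification is exactly what is missing from your sketch.)

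The missing idea, and the paper's actual argument, is a re-encoding: fix a colour $c\in C$ and interpret $c$-coloured pairs as non-edges. This yields a bijection $\Phi$ between ordered $C$-coloured (complete) graphs and ordered $C\setminus\{c\}$-edge-coloured \emph{simple} graphs which preserves vertex sets and, crucially, embeddings (embeddings reflect relations, so preserving the remaining colours forces preservation of the ``non-edge'' colour $c$). The latter class genuinely has free amalgamation, so \cite[Corollary~4.2]{hubivcka2019all} applies --- and it does so directly for the countably infinite colour set $C=E(\bF)\setminus\{\bot_\bF\}$, which makes your finitization detour unnecessary (though that reduction is correct and would also work after the re-encoding). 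With $\Phi$ in place the Ramsey property transfers back to finite ordered $C$-coloured graphs, and the remainder of your argument (Lemma~\ref{lem_orderedversion} plus Theorem~\ref{thm_KPT_EA=RP}) goes through exactly as in the paper.
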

\begin{proof} 
By Theorem~\ref{thm_KPT_EA=RP}, it suffices to prove that the class of finite ordered echeloned spaces has the Ramsey property. In order to show this, we consider the class of all finite ordered $C$-coloured graphs, where $C\coloneqq  E(\bF)\setminus\{\bot_{\bF}\}$. Let $c\in C$. 

Note that there exists a canonical bijection $\Phi$ between the class of all ordered $C$-coloured graphs and all ordered $C\setminus\{c\}$-edge-coloured (simple) graphs such that, for all ordered $C$-coloured graphs $\bA$ and $\bB$,
\begin{enumerate}[label=(\roman*), ref=(\roman*)]
    \item $\Phi(\bA)$ has the same vertex set as $\bA$, and
    \item the set of embeddings from $\bA$ to $\bB$ coincides with the set of embeddings from $\Phi(\bA)$ to $\Phi(\bB)$.
\end{enumerate}
Clearly, the class of finite $C\setminus\{c\}$-edge-coloured graphs has the free amalgamation property, thus the class of finite ordered $C\setminus\{c\}$-edge-coloured graphs has the Ramsey property, by a result of Hubi\v cka and Ne\v setril~\cite[Corollary 4.2, p.\ 50]{hubivcka2019all}. 

Thanks to the properties of $\Phi$ the class of finite ordered $C$-coloured graphs has the Ramsey property, too. So $\Aut (\Tf,\preceq_{\bF})$ is extremely amenable due to Theorem~\ref{thm_KPT_EA=RP}.
\end{proof}

For the notation used in the following lemma, see Corollary~\ref{cor_sim_EMB}.
\begin{LEM}\label{lem_AutOfFs}
Let $\beta$ be a local isomorphism of $E(\bF)\setminus \{\bot_{\bF}\}$, with $T \coloneqq  \dom \beta$. Then, there exists an automorphism $\alpha$ of $(\bF, \preceq_\bF)$ such that $\hat{\alpha}\restr_{T} = \beta$.
\end{LEM}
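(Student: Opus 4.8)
The plan is to extend the local isomorphism $\beta$ of the echeloning to a local isomorphism of the $C$-coloured graph $\Tf$, then use a back-and-forth argument based on the homogeneity of $(\Tf, \preceq_\bF)$ together with universality to extend it to a full automorphism that also respects the order $\preceq_\bF$. The starting observation is that an automorphism of $(\bF, \preceq_\bF)$ is, by Corollary~\ref{cor_sim_EMB}, the same thing as an automorphism of the ordered $C$-coloured graph $(\Tf, \preceq_\bF)$ (via Theorem~\ref{prop_F-1isC-colHomUniGraph}), so it suffices to produce such an automorphism whose induced action on colours restricts to $\beta$ on $T$.

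First I would enlarge $\beta$ to a \emph{total} order-automorphism $\bar\beta$ of $C = E(\bF)\setminus\{\bot_\bF\}$. This is possible because, by Lemma~\ref{distQ}, $C\cong(\QQ,\leq)$ is the universal homogeneous chain, so every local isomorphism (= partial order embedding with the back-and-forth extension property, which any partial isomorphism between finite subsets of $\QQ$ has) extends to a global order-automorphism; here $T=\dom\beta$ need not be finite, but since $\beta$ is a local isomorphism the standard back-and-forth over $\QQ$ still yields a total $\bar\beta\in\Aut(C,\leq)$ extending $\beta$. Then $\bar\beta$ together with the identity on the vertex set $F$ \emph{transports} the colouring: define $\chi'(\{u,v\}) \coloneqq \bar\beta(\chi_\bF(\{u,v\}))$. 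The point of the next step is that $(F,\chi',\preceq_\bF)$ is again a countable homogeneous universal ordered $C$-coloured graph — indeed relabelling colours by an order-automorphism of $C$ does not change which finite ordered $C$-coloured graphs embed, nor the weak homogeneity, because an order-automorphism of $C$ induces an auto-equivalence of the category of $C$-coloured graphs. By Lemma~\ref{lem_orderedversion} and the uniqueness of \Fraisse limits, there is an isomorphism $\alpha\colon(\Tf,\preceq_\bF)\to(F,\chi',\preceq_\bF)$ of ordered $C$-coloured graphs.

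Now $\alpha$, viewed as a self-map of $F$, is an automorphism of $(\bF,\preceq_\bF)$: it preserves $\preceq_\bF$, and for $\{u,v\}\in[F]^2$ we have $\chi_\bF(\{\alpha(u),\alpha(v)\}) = \chi'(\{u,v\})=\bar\beta(\chi_\bF(\{u,v\}))$, so on the echeloning $\hat\alpha = \bar\beta$ on $C$ and $\hat\alpha(\bot_\bF)=\bot_\bF$. In particular $\hat\alpha\restr_T = \bar\beta\restr_T = \beta$, which is exactly the conclusion. The main obstacle I anticipate is the bookkeeping in the first step: making sure that $\bar\beta$ can indeed be chosen total even though $T$ is an arbitrary (possibly infinite, possibly dense) subset of $C\cong\QQ$ — this uses that $\beta$ being a \emph{local} isomorphism means precisely that it is a partial isomorphism with the finite-extension property, so one runs back-and-forth not on $T$ itself but on finite pieces, adding one point of $\QQ$ at a time to domain and range while respecting the already-fixed values of $\beta$; the density and lack of endpoints of $\QQ$ guarantee each extension step succeeds. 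A secondary point to check carefully is that "relabelling colours by $\bar\beta$" genuinely preserves universality and weak homogeneity for the \emph{ordered} $C$-coloured graphs, but this is immediate since $\bar\beta$ induces a bijection on isomorphism types of finite ordered $C$-coloured graphs commuting with the embedding relation.
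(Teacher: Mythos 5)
Your argument is essentially correct for the intended reading of the lemma, but it takes a genuinely different route from the paper's. The paper stays finite: writing $\dom\beta=\{c_1,\dots,c_k\}$ and $c_{k+i}\coloneqq\beta(c_i)$, it realizes inside $\Tf$ a finite $C$-coloured configuration $\Delta\subseteq\tilde\Delta$ whose new pairs carry the colours $c_{k+1},\dots,c_{2k}$, checks that $v_i\mapsto\tilde v_i$, $w_i\mapsto\tilde w_i$ is an embedding of (ordered) echeloned spaces, and then extends this finite partial map to an automorphism of $(\bF,\preceq_\bF)$ using universality, weak homogeneity and homogeneity, verifying $\hat\alpha(c_i)=c_{k+i}$ by direct computation. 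You instead extend $\beta$ to a global order-automorphism $\bar\beta$ of $C=E(\bF)\setminus\{\bot_\bF\}\cong\QQ$ (Lemma~\ref{distQ}), twist the colouring $\chi$ of $\Tf$ by $\bar\beta$, observe that the twisted ordered $C$-coloured graph is again countable, universal and homogeneous (relabelling colours by a bijection of $C$ changes neither the partial isomorphisms nor the age up to relabelling), and invoke Lemma~\ref{lem_orderedversion} together with uniqueness of \Fraisse limits to obtain an isomorphism between the two presentations; since $\bar\beta$ is order-preserving, this isomorphism is an automorphism of $(\bF,\preceq_\bF)$ acting on the echeloning by $\bar\beta$ (extended by $\bot_\bF\mapsto\bot_\bF$), hence restricting to $\beta$ on $T$. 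Your route avoids the configuration bookkeeping and in fact produces, for \emph{every} automorphism of $E(\bF)$, an $\alpha$ inducing it, which gives the surjectivity of $\pi$ in Theorem~\ref{thm_ShasRamsey} directly; the paper's route uses only weak homogeneity and finite, amalgamation-style reasoning.

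Two repairs are needed. First, your parenthetical claim that the extension $\beta\mapsto\bar\beta$ also works for infinite $T$ is false: a partial order-isomorphism of $\QQ$ with infinite domain need not extend to any automorphism (an order-isomorphism from $\QQ\cap(0,1)$ onto $\QQ\cap(0,\infty)$ admits no extension), and for such a $\beta$ the lemma itself would fail, because $\hat\alpha$ is an automorphism of $E(\bF)$ fixing $\bot_\bF$ and hence restricts to a \emph{full} order-automorphism of $C$. So ``local isomorphism'' must be read, as in the paper's proof and in its only application, as a partial isomorphism with finite domain; for finite $T$ your back-and-forth step is fine. Second, a direction slip: with the paper's convention $\chi_1(\{x,y\})=\chi_2(\{f(x),f(y)\})$ for morphisms of $C$-coloured graphs, an isomorphism $\alpha\colon(\Tf,\preceq_\bF)\to(F,\chi',\preceq_\bF)$ with $\chi'=\bar\beta\circ\chi$ satisfies $\chi(\{\alpha(u),\alpha(v)\})=\bar\beta^{-1}(\chi(\{u,v\}))$, so as written you get $\hat\alpha\restr_C=\bar\beta^{-1}$; take the isomorphism in the opposite direction, or set $\chi'\coloneqq\bar\beta^{-1}\circ\chi$. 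Finally, your opening identification of $\Aut(\bF,\preceq_\bF)$ with $\Aut(\Tf,\preceq_\bF)$ is not accurate (the latter is the proper subgroup fixing every colour), but you never actually use it: what you use --- and verify --- is that a $\preceq_\bF$-preserving bijection whose action on colours is the order-automorphism $\bar\beta$ is an automorphism of the echeloned space, which is exactly Corollary~\ref{cor_sim_EMB}.
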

\begin{proof} 
Recall from Theorem~\ref{prop_F-1isC-colHomUniGraph} that $\Tf=(F,\chi)$ is a universal homogeneous $C$-coloured graph, where $C\coloneqq  E(\bF)\setminus\{\bot_{\bF}\}$. Denote the elements of $\dom\beta$ by $c_1,\dots,c_k$ and let $c_{k+i}\coloneqq \beta(c_i)$ for each $i\in \{1,\dots,k\}$. Let $c_\ast\in C$ be strictly greater than any element of $\{c_1,\dots,c_k,c_{k+1},\dots,c_{2k}\}$. Consider the $C$-coloured graph $\Delta=(V,\chi_{\Delta})$ given by 
\begin{itemize}
    \item $D=\{v_1,\dots,v_{2k}\}\dotcup \{w_1,\dots,w_{2k}\}$, and
    \item $\chi_{\Delta}(\{x,y\})=\begin{cases}
        c_i & \text{if } \{x,y\}=\{v_i,w_i\} \text{ for some } i\in\{1,\dots,2k\},\\
        c_\ast & \text{else}.
    \end{cases}$
\end{itemize}
Since $\Tf$ is universal, we may assume without loss of generality that $D\subseteq F$ and $\chi_\Delta= \chi\restr_{[D]^2}$. Consider the superstructure $\tilde{\Delta}=(\tilde{D},\chi_{\tilde{\Delta}})$ of $\Delta$ given by 
\begin{itemize}
    \item $\tilde{D}=D\dotcup\{\tilde{v}_1,\dots,\tilde{v}_{k}\}\dotcup \{\tilde{w}_1,\dots,\tilde{w}_{k}\}$, and
    \item ${\chi}_{\tilde{\Delta}}(\{x,y\})=\begin{cases}
        \chi_\Delta(\{x,y\}) & \text{if } x,y\in D,\\
        c_{k+i} & \text{if }\{x,y\}=\{\tilde{v}_i,\tilde{w}_i\} \text{ for some } i\in\{1,\dots,k\},\\
        c_\ast & \text{else}.
    \end{cases}$
\end{itemize}
Then $\Delta$ is a substructure of both $\Tf$ and  $\tilde\Delta$. Since $\Tf$ is universal and weakly homogeneous, there exists an embedding $\kappa\colon\tilde\Delta\embedsto\Tf$ such that the following diagram commutes:
\[\begin{tikzcd}
\Delta\arrow[dr, hook, "="'] \arrow[r, hook, "="] &  \tilde\Delta\arrow[d, dashed, hook, "\kappa"]\\
& \Tf.
\end{tikzcd}
\]
Thus, again, we may assume without loss of generality that $\tilde{D}\subseteq F$ and $\chi_{\tilde\Delta}=\chi\restr_{[\tilde{D}]^2}$.

The elements of $D$ are linearly ordered  by $\preceq_{\Delta}\coloneqq\preceq_\bF\cap V^2$.  Clearly, $\preceq_{\Delta}$ can be extended to $\tilde{D}$ in such a way that the mapping 
\[h\colon D_0\to \tilde{D}\setminus D,\, 
 v_i\mapsto \tilde{v}_i,\, w_i\mapsto \tilde{w}_i, i\in \{1,\dots,k\},\quad\text{where } D_0 = \{v_1,\dots, v_k,w_1,\dots, w_k\},
\]
is monotone. Let us fix such an extension $\preceq_{\tilde{\Delta}}$. Let $\bD_0$, $\bD$ and $\tilde{\bD}$ be the subspaces of $\bF$ induced on $D_0$, $D$, and $\tilde{D}$, respectively. Then $(\bD,\preceq_{\Delta})$ is both, a substructure of $(\tilde{\bD},\preceq_{\tilde{\Delta}})$ and of $(\bF,\preceq_{\bF})$. 
\begin{nCLM}
    $h\colon \bD_0\to\tilde\bD$ is an embedding.
\end{nCLM} 
\begin{proof}[Proof of the claim]
    By Corollary~\ref{cor_sim_EMB} there exist order embeddings  $\gamma_0\colon E(\bD_0)\embedsto E(\bF)$ and $\gamma\colon E(\bD)\embedsto E(\bF)$, such that the following diagrams commute:
    \[
    \begin{tikzcd}
        D_0^2 \ar[d,hook',"="']\ar[r,"\eta_{\bD_0}"]& E(\bD_0) \ar[d,hook',"\gamma_0"]\\
        F^2 \ar[r,"\eta_\bF"] & E(\bF),
    \end{tikzcd}\qquad\qquad 
    \begin{tikzcd}
        D^2 \ar[d,hook',"="']\ar[r,"\eta_{\bD}"]& E(\bD) \ar[d,hook',"\gamma"]\\
        F^2 \ar[r,"\eta_\bF"] & E(\bF).
    \end{tikzcd}
    \]
    Note that the map 
    \[
    \beta\colon\{c_1,\dots,c_k,c_\ast\}\to\{c_1,\dots,c_{2k},c_\ast\},\quad c_i\mapsto c_{k+i},\, c_\ast\mapsto c_\ast,\, i\in\{1,\dots k\}
    \]
    is an order embedding. Thus the map
    \[
    \hat{h}\colon E(\bD_0)\to E(\bD),\quad e\mapsto \gamma^{-1}(\beta(\gamma_0(e))) 
    \]
    is a well-defined order embedding satisfying $\eta_\bD\circ h^2 = \hat{h}\circ\eta_{\bD_0}$. Hence $h\colon \bD_0\to\bD$ is an embedding by Corollary~\ref{cor_sim_EMB}.
\end{proof}

In turn, letting $\preceq_{\Delta_0}\coloneqq\preceq_\Delta\cap D_0^2$, it follows that $h\colon (\bD_0,\preceq_{\Delta_0})\to (\tilde{\bD},\preceq_{\tilde\Delta})$ is an embedding, too. 

Since $(\bF,\preceq_{\bF})$ is universal and weakly homogeneous there exists $\iota\colon (\tilde{\bD},\preceq_{\tilde{\Delta}})\embedsto (\bF,\preceq_{\bF})$ such that the following diagram commutes:
\[\begin{tikzcd}
(\bD,\preceq_{\Delta})\arrow[dr, hook, "="'] \arrow[r, hook, "="] &  (\tilde{\bD},\preceq_{\tilde{\Delta}})\arrow[d, dashed, hook, "\iota"]\\
& (\bF,\preceq_{\bF}).
\end{tikzcd}
\]
Consequently, $\iota\circ h\colon (\bD_0,\preceq_{\bD_0})\to (\bF,\preceq_\bF)$ is an embedding. By the homogeneity of $(\bF,\preceq_{\bF})$, there exists an $\alpha\in \Aut(\bF,\preceq_{\bF})$ that extends $\iota\circ h$. It remains to check that $\hat\alpha$ extends $\beta$. 
To this end take $i\in \{1,\dots,k\}$. From the diagram above we know that $(v_{k+i},w_{k+i})=(\iota(v_{k+i}),\iota(w_{k+i}))$. Moreover, as $(v_{k+i},w_{k+i})\sim_{\tilde{\bD}} (\tilde{v}_i,\tilde{w}_i)$, we have that $(\iota (v_{k+i}),\iota(w_{k+i})) \sim_{\bF} (\iota(\tilde{v_i}),\iota(\tilde{w_i}))$. 

Thus
\begin{align*}
    \hat{\alpha}(c_i)&=[(\alpha(v_i),\alpha(w_i))]_{\sim_{\bF}}=\eta_{\bF}(\alpha(v_i),\alpha(w_i))\\
    &= \eta_{\bF}(\iota(h(v_i)),\iota(h(w_i))) =\eta_{\bF}(\iota(\tilde{v}_i),\iota(\tilde{w}_i))\\
    &=  \chi(\{\iota(\tilde{v}_i),\iota(\tilde{w}_i)\}) = \chi(\{\iota({v}_{k+i}),\iota({w}_{k+i})\}) \\
    &= \chi(\{{v}_{k+i},{w}_{k+i}\}) = \chi_\Delta(\{{v}_{k+i},{w}_{k+i}\})\\
    &= c_{k+i}=\beta(c_i).
\end{align*}
In other words, $\hat{\alpha}\restr_{\dom\beta}=\beta$.
\end{proof}

Prior to stating and proving the main result of this section, we recall a natural family of topological groups. Let $X$ be a set. Then the corresponding full symmetric group $\Sym(X)$, \ie the group of all self-bijections of $X$, together with the topology of pointwise convergence associated with the discrete topology on $X$ is a topological group. In turn, if $G$ is a subgroup of $\Sym(X)$, then $G$, endowed with the relative topology inherited from $\Sym(X)$, is a topological group, and the sets of the form \begin{displaymath}
  V_G(E) \coloneqq  \{g \in G \mid \forall x \in E \colon \, gx=x \} \qquad (X \supseteq E \text{ finite})
\end{displaymath} constitute a neighborhood basis at the identity in $G$. If $X$ is countable, then $\Sym(X)$, as well as any of its closed topological subgroups, is Polish.

The proof of the following result will make use of Theorem~\ref{thm_KPT_EA=RP} along with the persistence of extreme amenability of topological groups under extensions (see~\cite[Corollary~6.2.10]{PestovBook}).

\begin{THM}\label{thm_ShasRamsey}
$(\bF, \preceq_\bF)$ has the Ramsey property.
\end{THM}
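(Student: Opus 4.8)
The plan is to prove, via the Kechris-Pestov-\Todorcevic correspondence (Theorem~\ref{thm_KPT_EA=RP}), the equivalent statement that the Polish group $G\coloneqq\Aut(\bF,\preceq_\bF)$ is extremely amenable; recall that $(\bF,\preceq_\bF)$ is the \Fraisse limit of the class of finite ordered echeloned spaces, so its age is precisely that class. The main device is the group homomorphism
\[
\rho\colon G\longrightarrow \Aut\bigl(E(\bF)\setminus\{\bot_\bF\},\leq\bigr),\qquad \alpha\longmapsto \hat\alpha\restr_{E(\bF)\setminus\{\bot_\bF\}},
\]
which is well defined because $\hat\alpha$ (in the sense of Corollary~\ref{cor_sim_EMB}) is an order automorphism of $E(\bF)$ fixing its minimum $\bot_\bF$, and which is a homomorphism by the uniqueness clause of Lemma~\ref{lem_sim_HOM}; by Lemma~\ref{distQ} its target is isomorphic to $\Aut(\QQ,\leq)$. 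First I would identify the kernel: $\hat\alpha=\id_{E(\bF)}$ is equivalent to $\eta_\bF(\alpha u,\alpha v)=\eta_\bF(u,v)$ for all $u,v\in F$, i.e.\ to $\alpha$ preserving the edge-colouring $\chi$ of $\Tf$ from Theorem~\ref{prop_F-1isC-colHomUniGraph}; together with preservation of $\preceq_\bF$ this says exactly $\alpha\in\Aut(\Tf,\preceq_\bF)$. Hence $N\coloneqq\ker\rho=\Aut(\Tf,\preceq_\bF)$ is a closed normal subgroup of $G$, and it is extremely amenable by Lemma~\ref{kernelextramen}.

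Next I would argue that $\rho$ is a topological quotient map \emph{onto all of} $\Aut(\QQ,\leq)$. Continuity and openness onto the image are routine for closed subgroups of symmetric groups: a basic neighbourhood $V_G(E)$ of the identity is controlled by the finite set of colours occurring on pairs from $E$, and conversely. The substantial point is surjectivity, for which I would run a back-and-forth construction iterating Lemma~\ref{lem_AutOfFs}: given $g\in\Aut(E(\bF)\setminus\{\bot_\bF\},\leq)$, build $\alpha\in G$ with $\rho(\alpha)=g$ as an increasing union of finite partial $\preceq_\bF$-isomorphisms $p$ satisfying $\eta_\bF(pu,pv)=g(\eta_\bF(u,v))$; each one-point extension amounts to realising in $\bF$ a new point with prescribed echelon-classes and a prescribed position in $\preceq_\bF$, which is possible because the resulting finite configuration is, after relabelling its echeloning by $g$, a finite ordered echeloned space --- equivalently a finite ordered $C$-coloured graph --- and $(\bF,\preceq_\bF)$, equivalently the universal homogeneous ordered $C$-coloured graph $(\Tf,\preceq_\bF)$ of Lemma~\ref{lem_orderedversion}, is universal and weakly homogeneous. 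Consequently $G/N\cong\Aut(\QQ,\leq)$ as topological groups, and $\Aut(\QQ,\leq)$ is extremely amenable by the classical finite Ramsey theorem together with Theorem~\ref{thm_KPT_EA=RP}.

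Finally, I would invoke the persistence of extreme amenability under extensions of topological groups (\cite[Corollary~6.2.10]{PestovBook}): from the short exact sequence $1\to N\to G\to\Aut(\QQ,\leq)\to 1$ with $N$ and $\Aut(\QQ,\leq)$ extremely amenable, it follows that $G=\Aut(\bF,\preceq_\bF)$ is extremely amenable, hence by Theorem~\ref{thm_KPT_EA=RP} its age --- the class of finite ordered echeloned spaces --- is a Ramsey class, i.e.\ $(\bF,\preceq_\bF)$ has the Ramsey property. I expect the only real work to lie in the middle step: showing that $\rho$ is genuinely onto $\Aut(\QQ,\leq)$ (not merely onto a dense subgroup, since density alone would not make the extension theorem applicable) and that it is a topological quotient map; Lemma~\ref{lem_AutOfFs} is exactly the finite-level input that drives the back-and-forth.
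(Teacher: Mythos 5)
Your proposal is correct, and its skeleton coincides with the paper's proof: both pass through Theorem~\ref{thm_KPT_EA=RP}, consider the homomorphism $\alpha\mapsto\hat\alpha$ onto (essentially) $\Aut(\QQ,\leq)$ (via Lemma~\ref{distQ}), identify its kernel with $\Aut(\Tf,\preceq_\bF)$, which is extremely amenable by Lemma~\ref{kernelextramen}, verify continuity and openness onto the image, and conclude with the extension theorem \cite[Corollary~6.2.10]{PestovBook} plus extreme amenability of $\Aut(\QQ,\leq)$. The genuine divergence is in the surjectivity step. The paper only extracts \emph{density} of the image from Lemma~\ref{lem_AutOfFs} and then upgrades density to surjectivity by soft Polish-group theory: continuity and openness exhibit the image as a quotient of a Polish group by a closed normal subgroup, hence Polish, hence closed in $\Aut(E(\bF))$, hence all of it. You instead construct, for each $g\in\Aut(E(\bF)\setminus\{\bot_\bF\},\leq)$, an actual preimage by a back-and-forth on the points of $F$, realizing each one-point extension with prescribed colours and prescribed relative $\preceq_\bF$-position through the universality and weak homogeneity of $(\Tf,\preceq_\bF)$ from Lemma~\ref{lem_orderedversion}; this is sound (the prescribed colours lie in $C$, the extended configuration is a finite ordered $C$-coloured graph, hence in the age), and it yields a more elementary, constructive surjectivity proof that bypasses the descriptive-set-theoretic input (the Becker--Kechris facts and the parenthetical remark on completeness for the upper uniformity), at the price of carrying out the back-and-forth; the paper's route gets surjectivity from a single density lemma plus soft topology and, as its remark notes, survives outside the separable setting. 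Two small points: your phrase ``iterating Lemma~\ref{lem_AutOfFs}'' is a mislabel --- that lemma by itself only gives density, and what your back-and-forth really iterates is the one-point extension property behind Lemma~\ref{lem_orderedversion}; and openness onto the image is not purely formal (the paper's Claim~\ref{clm_open} uses homogeneity of $(\Tf,\preceq_\bF)$), though given your direct surjectivity it also follows from the open mapping theorem for Polish groups, and it is indeed still needed, as you acknowledge, to identify the quotient topology on $G/\ker\rho$ with that of $\Aut(\QQ,\leq)$ before invoking the extension theorem.
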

\begin{proof} 
Due to Theorem~\ref{thm_KPT_EA=RP} $(\bF, \preceq_\bF)$ has the Ramsey property if and only if $\Aut (\bF, \preceq_\bF)$ is extremely amenable. 

Let $C \coloneqq  E(\bF) \setminus \{\bot_\bF\}$. Let $\Tf=(F,\chi)$ be the $C$-coloured graph constructed in Proposition~\ref{prop_F-1isC-colHomUniGraph}. Recall that $(\bF,\preceq_\bF)$ is the \Fraisse limit of the class of finite ordered echeloned spaces. Therefore $\Aut(\Tf, \preceq_\bF)$ is a subgroup of $\Aut(\bF, \preceq_\bF)$, namely the one of automorphisms that setwise preserve each equivalence class of $\sim_{\bF}$.  Let $\pi \colon \Aut(\bF, \preceq_\bF) \to \Aut(E(\bF)) ,\, \alpha \mapsto \hat{\alpha}$. Clearly, $\ker(\pi) = \Aut(\Tf, \preceq_\bF)$, which is extremely amenable by Lemma~\ref{kernelextramen}.

\begin{CLM}\label{clm_continuous}
${\pi}$ is continuous.
\end{CLM}
\begin{proof}[Proof of the claim]
 It suffices to note that, for every $c \in E(\bF)$, we have  $\pi[V_{\Aut(\bF, \preceq_\bF)}(\{x,y\})] \subseteq V_{\Aut(E(\bF))}(\{c\})$, where  $x,y\in F$ such that $\eta_{\bF}(x,y)=c$. 
\end{proof}

\begin{CLM}\label{clm_open}
${\pi}$ is open onto its image.
\end{CLM}
\begin{proof}[Proof of the claim]
As $\pi$ is a homomorphism, it is enough to show that, for every finite subset $E$ of $F$, there exists a finite subset $\tilde{E}$ of $E(\bF)$ such that
\[\pi[\Aut(\bF, \preceq_\bF)] \cap V_{\Aut(E(\bF))}(\tilde{E}) \subseteq \pi[V_{\Aut(\bF, \preceq_\bF)}(E)].\]
Let $E$ be a finite subset of $F$. Define $\tilde{E} \coloneqq  \eta_\bF[E^2]$. Let $\gamma \in \pi[\Aut(\bF, \preceq_\bF)] \cap V_{\Aut(E(\bF))}(\tilde{E})$. There then exists some $\alpha_{0} \in \Aut(\bF, \preceq_\bF)$ with $\pi(\alpha_{0}) = \gamma$. From $\pi(\alpha_{0}) = \gamma \in V_{\Aut(E(\bF))}(\tilde{E})$ we infer that $E \to \alpha_{0}[E], \, x \mapsto \alpha_{0}(x)$ is a local isomorphism in $(\Tf,\preceq_\bF)$. Since $(\Tf,\preceq_\bF)$ is homogeneous by Lemma~\ref{lem_orderedversion}, there exists $\alpha_{1} \in \Aut(\Tf,\preceq_\bF)$ such that $\alpha_{1}\restr_{E} = \alpha_{0}\restr_{E}$. We see that $\beta \coloneqq  \alpha_{1}^{-1}\circ \alpha_{0} \in V_{\Aut(\bF, \preceq_\bF)}(E)$. Moreover, as $\alpha_{1} \in \Aut(\Tf,\preceq_\bF)$, one has $\pi(\alpha_{1}) = \id_{E(\bF)}$, so that $\pi(\beta) = \pi(\alpha_{1})^{-1}\pi(\alpha_{0}) = \pi(\alpha_{0}) = \gamma$. Hence, $\gamma = \pi(\beta) \in \pi[V_{\Aut(\bF, \preceq_\bF)}(E)]$ as desired.
\end{proof}

\begin{CLM}
$\pi$ is surjective. 
\end{CLM}

\begin{proof}[Proof of the claim]
We first observe that $\pi[\Aut(\bF, \preceq_\bF)]$ is dense in $\Aut(E(\bF))$ with respect to the topology of pointwise convergence. Indeed, for any $\beta\in \Aut(E(\bF))$ and any finite subset $E_0$ of $E(\bF)$, there exists an $\alpha \in \Aut(\bF, \preceq_\bF)$ such that $\pi(\alpha)\restr_{E_0} = \hat{\alpha} \restr_{E_0} = \beta \restr_{E_0}$, thanks to Lemma~\ref{lem_AutOfFs}. 

Now, by Claims~\ref{clm_continuous} and~\ref{clm_open}, the topological subgroup $\pi[\Aut(\bF, \preceq_\bF)]$ of $\Aut(E(\bF))$ is actually isomorphic to the quotient of the Polish group $\Aut(\bF, \preceq_\bF)$ by the closed normal subgroup $\ker(\pi)$. It is therefore itself a Polish group (see, \eg \cite[Proposition~1.2.3]{BeckerKechris}) and thus closed in $\Aut(E(\bF))$ (see, \eg \cite[Proposition~1.2.1]{BeckerKechris}). Hence, it is equal to $\Aut(E(\bF))$, \ie $\pi$ is surjective.

(Note that the previous argument does not actually rely on the separability assumption behind the definition of a Polish space. Indeed, the quotient of a metrizable group, complete for its upper uniform structure, is again complete for its upper uniform structure~\cite[Corollary 2, p.~27]{Brown-1971} (see also~\cite[Theorem~2]{Brown-1972}) and therefore closed in any group in which it topologically embeds~\cite[3.24]{RoelckeDierolf}.)
\end{proof}

It follows by these three claims that the group $\Aut(\bF, \preceq_\bF)$ has an extremely amenable closed normal subgroup $\ker(\pi) = \Aut(\Tf, \preceq_\bF)$ whose corresponding quotient $\faktor{\Aut(\bF, \preceq_\bF)}{\ker \pi}$, being isomorphic to $\Aut(\QQ, <)$ by Lemma~\ref{distQ}, is also extremely amenable \cite{pestov1998free}. Hence $\Aut(\bF, \preceq_\bF)$ itself is extremely amenable (see, \eg \cite[Corollary~6.2.10]{PestovBook}) and therefore $(\bF, \preceq_\bF)$ has the Ramsey property by Theorem~\ref{thm_KPT_EA=RP}.
\end{proof}

\section{Universality of \texorpdfstring{$\Aut(\bF)$}{Aut(F)}}\label{sec_Katetov}
	The goal of this section is to prove the following universality property for the automorphism group of the countable universal homogeneous echeloned space $\bF$:
	\begin{THM}\label{AutUniversal}
		The full symmetric group $\Sym(\NN)$ topologically embeds into $\Aut(\bF)$ (with respect to the  pointwise convergence topology). 
	\end{THM}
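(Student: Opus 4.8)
We shall deduce the statement from the theory of Kat\v etov functors of \Kubis and \Masulovic \cite{KubMas16}. Write $\calM$ for the class of finite echeloned spaces, regarded as the category whose objects are the finite echeloned spaces and whose morphisms are the embeddings. Recall that a \emph{Kat\v etov functor} for $\calM$ is a functor $K$ from this category to the category of countable echeloned spaces with embeddings, together with a natural transformation $\eta\colon\Id\Rightarrow K$ all of whose components $\eta_\bA\colon\bA\embedsto K(\bA)$ are embeddings, such that for every $\bA$ and every one-point extension $\bB\supseteq\bA$ (\ie $\bB\in\calM$, $A\subseteq B$, $|B\setminus A|=1$) there is an embedding $\bB\embedsto K(\bA)$ extending $\eta_\bA$. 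By \cite{KubMas16}, once such a $K$ is available, its $\omega$-fold iterate $K^\omega$ carries every countable structure all of whose finite substructures lie in $\calM$ to a copy of the \Fraisse limit $\bF$, functorially, and the automorphism group of any such structure embeds into $\Aut(\bF)$ as a \emph{closed} topological subgroup. We will feed into this the \emph{equilateral} echeloned space $\bD$ on $\NN$, \ie the echeloned space whose echeloning has exactly the two classes $\bot$ and $[(0,1)]_{\sim}$, so that $(x,x)<_\bD(y,z)$ precisely when $y\neq z$. Every finite substructure of $\bD$ is a finite equilateral echeloned space, hence lies in $\calM$, and every permutation of $\NN$ is an automorphism of $\bD$, so $\Aut(\bD)=\Sym(\NN)$. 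Thus it suffices to construct a Kat\v etov functor for $\calM$.

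\textbf{Construction of $K$.} Fix $\bA\in\calM$ with echeloning $E(\bA)=\{\bot_\bA=e_0<\dots<e_n\}$. Since the echeloned-space axioms impose no condition linking distinct pairs, a one-point extension of $\bA$ (by a point $v$) is, up to isomorphism over $\bA$, nothing but a finite linear order $L\supseteq E(\bA)$ with the same least element together with a function $c\colon A\to L\setminus\{\bot_\bA\}$ satisfying $L=E(\bA)\cup c[A]$ (here $c(a)$ records the class of $(v,a)$); call such a datum an \emph{extension type}, and note that $\bA$ has only countably many of them up to equivalence. Let $K(\bA)$ be the echeloned space on $A\dotcup T_\bA$, where $T_\bA$ is a transversal of the extension types, whose echelon restricts to $\leq_\bA$ on $A^2$, makes each $t\in T_\bA$ realize over $A$ the extension type it names, and sends every two-element subset of $T_\bA$ to one fixed topmost class. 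It is convenient to carry out all the bookkeeping through a dull metric realization with values in $\{0\}\cup\bigl((1,2)\cap\QQ\bigr)$, which exists by Proposition~\ref{MScharacter}: as \emph{any} map from a finite set into $(1,2)\cap\QQ$ is the distance vector of a new point in some dull metric space, extension types correspond to such maps (plus their ``old'' counterparts), and $K(\bA)$ can be read off directly. By construction $\eta_\bA\colon\bA\embedsto K(\bA)$ (the inclusion) is an embedding, $K(\bA)$ is a countable echeloned space, and every one-point extension of $\bA$ embeds into $K(\bA)$ over $\eta_\bA$.

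\textbf{Functoriality --- the main obstacle.} What requires genuine work is to turn $K$ into a functor with $\eta$ natural. Given an embedding $f\colon\bA\embedsto\bB$, the induced order-embedding $\hat f\colon E(\bA)\embedsto E(\bB)$ of Corollary~\ref{cor_sim_EMB} is generally \emph{not} surjective, so transporting an extension type $(L,c)$ of $\bA$ to one of $\bB$ forces a choice of how to interleave the fresh values $L\setminus E(\bA)$ with the --- possibly many --- elements of $E(\bB)$ lying in the corresponding gaps of $\hat f[E(\bA)]$. We fix this choice canonically, inserting each fresh value of $L$ as far to the left in $E(\bB)$ as its position in $L$ permits and resolving residual ties by a fixed enumeration of extension types; the strong amalgamation property of $\calM$ (Remark~\ref{echelonhasSAP}) guarantees the outcome is again a legitimate one-point extension of $\bB$. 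One then verifies, by unwinding the definitions --- a routine but somewhat lengthy check --- that the resulting assignment of points of $T_\bA$ to points of $T_\bB$ (together with $f$ on $A$) is an embedding $K(\bA)\embedsto K(\bB)$, is compatible with composition of embeddings, and makes $\eta$ natural. \emph{This coherent realization of one-point extensions is the crux of the argument}; everything else is formal.

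\textbf{Conclusion.} With a Kat\v etov functor $K$ for $\calM$ at hand, \cite{KubMas16} applies directly: $K^\omega(\bD)\cong\bF$, and $K^\omega$ induces a group homomorphism $\Phi\colon\Sym(\NN)=\Aut(\bD)\to\Aut(\bF)$. It is injective, since the action of $\Phi(g)$ on the canonical copy of $\bD$ inside $K^\omega(\bD)$ recovers $g$ (naturality of $\eta$). It is moreover a topological embedding: one has $F=\bigcup_{n\in\NN}K^n(\bD)$ as a set, and each element of $K^n(\bD)$ depends on only finitely many points of $\NN\subseteq F$ (and conversely), so that under $\Phi$ the pointwise stabilisers of finite subsets of $\NN$ correspond to pointwise stabilisers of finite subsets of $F$ intersected with $\Phi[\Sym(\NN)]$; hence $\Phi$ is open onto its image. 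Finally, $\Phi[\Sym(\NN)]$, being a continuous open homomorphic --- hence Polish --- image of the Polish group $\Sym(\NN)$, is a Polish, and therefore closed, subgroup of $\Aut(\bF)$. This proves the theorem, indeed with the image of $\Sym(\NN)$ closed in $\Aut(\bF)$.
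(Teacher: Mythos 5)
Your overall strategy is exactly the paper's: reduce to the existence of a \Katetov functor for the class of finite echeloned spaces, invoke \cite{KubMas16} to embed the automorphism group of any countable echeloned space into $\Aut(\bF)$ as a closed topological subgroup, and apply this to the two-class (``equilateral'') echeloned space on $\NN$, whose automorphism group is $\Sym(\NN)$. The problem is that the step you yourself flag as ``the crux'' --- actually producing the functor --- is where your construction has a genuine gap, and the paper's Section~\ref{sec_Katetov} exists precisely to close it. First, your $K(\bA)$ is not a well-defined echeloned space: you specify the echelon on $A^2$, the classes of pairs $(t,a)$ for a single $t\in T_\bA$, and a top class for pairs inside $T_\bA$, but an echelon must linearly order \emph{all} pairs, and you never say how a fresh colour introduced by one extension type compares with a fresh colour introduced by a different type. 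The dull-metric ``bookkeeping'' does not repair this: choosing a rational distance vector for each type fixes these comparisons arbitrarily, and the coherence of such arbitrary choices under embeddings is exactly what functoriality demands. Second, your action on morphisms is underdetermined and, under its natural completion, wrong: given $f\colon\bA\embedsto\bB$, the transported one-point extension must also prescribe the classes of the pairs $(K(f)(t),b)$ for $b\in B\setminus f[A]$, which you never address. If one completes your rule by assigning these pairs a fresh topmost value over $E(\bB)$, then for $g\colon\bB\embedsto\bC$ the two-step transport places that value just above $\hat g(\max E(\bB))$ (leftmost insertion), while the direct transport along $g\circ f$ places it above all of $E(\bC)$; these are different extension types, so $K(g\circ f)\neq K(g)\circ K(f)$. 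Finally, resolving ties ``by a fixed enumeration of extension types'' chosen separately for $\bA$ and for $\bB$ gives no reason why cross-type comparisons in $K(\bA)$ are preserved in $K(\bB)$, i.e.\ why $K(f)$ is an embedding at all.

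The paper resolves all three points by a careful canonical design rather than by a routine verification: the colours available to a new point over $\bX$ form the fixed chain $C_\bX=\{\bot_\bX\}\oplus\{b_\bX\}\oplus(N_X\times\{0\})\oplus\{c_1\}\oplus(N_X\times\{1\})\oplus\dots\oplus\{c_n\}\oplus(N_X\times\{n\})$, new points are \emph{all} functions $h\colon X\to C_\bX$ avoiding $\bot_\bX$ (so cross-type comparisons are determined once and for all by the order of $C_\bX$), pairs of new points and the ``don't care'' values on $B\setminus f[A]$ receive the reserved colour $b$ sitting at the \emph{bottom} of the chain and mapped to $b_\bY$ by every morphism, and an embedding $\phi$ acts on colours by the slot-to-slot order embedding $\tilde\psi\colon(k,i)\mapsto(k,j)$ where $\hat\phi(c_i)=d_j$. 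Verifying that this yields a functor satisfying the \Katetov extension property is the bulk of Section~\ref{sec_Katetov}; your sketch replaces it with choices (topmost blank classes, leftmost insertion, enumeration tie-breaking) that are either unspecified or demonstrably incompatible with composition, so the proof as written does not go through. The surrounding reductions (choice of the equilateral space, the appeal to \cite{KubMas16} for the closed topological embedding) are fine and agree with the paper.
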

	For a proof of this claim we are going to employ the theory of \Katetov functors in the sense of \cite{KubMas16}. If we succeed to equip the class of finite echeloned spaces with a \Katetov functor, then  from \cite[Corollary 3.9, Corollary 3.12]{KubMas16} it follows that the automorphism group of every countable echeloned space topologically embeds into $\Aut(\bF)$ (with respect to the topology of pointwise convergence). Adding to this the observation that the unique echeloned space on $\NN$ with two-element echeloning has  automorphism group $\Sym(\NN)$, the claim of  Theorem~\ref{AutUniversal} follows readily.

    Note that Theorem~\ref{AutUniversal} could be stated stronger. When looking into the details of the \Katetov construction it becomes apparent that actually the natural action of $\Sym(\NN)$ is, up to action isomorphism, a subaction of the natural action of $\Aut(\bF)$ on $F$ (cf.~\cite[Theorems 2.2, 3.3]{KubMas16}). 
 
	The rest of this section is devoted to the proof that finite echeloned spaces admit a \Katetov functor. 
	\begin{DEF}
		Let $\calC$ be an age, \ie
		\begin{itemize}
			\item $\calC$ is a class of finitely generated structures of the same type,
			\item $\calC$ is isomorphism-closed,
			\item $\calC$ is closed under taking substructures (it has the hereditary property),
			\item $\calC$ has the joint embedding property,
			\item $\calC$ splits into countably many isomorphism classes.
		\end{itemize}
		Let $\cat{C}$ be a category whose object class consists of all those countably generated structures $\bX$ with the property that all finitely generated substructures of $\bX$ are in $\calC$, and whose morphism class contains all embeddings. Let $\cat{A}$ be the full subcategory of $\cat{C}$ induced by $\calC$. 
		
		A functor $K\colon\cat{A}\to\cat{C}$ is called a \emph{\Katetov functor} if 
		\begin{itemize}
			\item $K$ preserves embeddings,
			\item there exists a natural embedding $\lambda\colon \Id\embedsto K$ such that for all $\bA\in\calC$ and for all one-point extensions $\bB$ of $\bA$ in $\calC$ there exists an embedding $g\colon \bB\embedsto K(\bA)$ such that the following diagram commutes:
			 \[
			 \begin{tikzcd}
			 	\bA \arrow[d,hook',"="']\arrow[dr,hook',"\lambda_{\bA}"]\\
			 	\bB \arrow[r,hook,"g"] & K(\bA).
			 \end{tikzcd}
			 \]
		\end{itemize} 
	\end{DEF}
	Our particular setting is the following:
	\begin{itemize}
		\item $\calC$ is the class of all finite echeloned spaces,
		\item $\cat{C}$ is the category of countable echeloned spaces with embeddings,
		\item $\cat{A}$ is the full subcategory of $\cat{C}$ induced by $\calC$.
	\end{itemize}
	 Let $\bX=(X,\leq_{\bX})$ be a finite echeloned space. Suppose that $E(\bX)=\{\bot_{\bX},c_1,\dots,c_n\}$, and that 
	\[
		\bot_{\bX} <_{E(\bX)} c_1 <_{E(\bX)} c_2 <_{E(\bX)}\dots <_{E(\bX)} c_n.
	\]
	Let $N_X\coloneqq\{1,\dots,|X|\}$. Let us define a new chain $C_\bX=(C_\bX,\leq_{C_\bX})$ according to 
	\[
	C_\bX \coloneqq E(\bX)\dotcup\{b_\bX\}\dotcup (N_X\times\{0,\dots,n\}),
	\]
	and
	\[ x <_{C_\bX} y  \,:\Longleftrightarrow\, 
	\begin{cases}
		x = \bot_\bX, y\in C_\bX\setminus\{\bot_\bX\}\text{, or}\\	
		x = b_\bX, y\in C_\bX\setminus\{\bot_\bX,b_\bX\}\text{, or}\\
		x = c_i, y= c_j, i<j\text{, or}\\
		x = c_i, y = (k,j), i\leq j, k\in N_X\text{, or}\\
		x = (k,i), y= c_j, i<j, k\in N_X\text{, or}\\
		x = (k,i), y = (\ell,j), i<j, \text{ or } i=j \text{ and } k<\ell, \text{ where }k,\ell\in N_X.
	\end{cases}
	\]
	In other words, $C_\bX$ can be expressed as an ordinal sum of chains as follows:
	\[
		\{\bot_\bX\} \oplus \{b_\bX\} \oplus (N_X\times\{0\}) \oplus \{c_1\} \oplus (N_X\times\{1\}) \oplus \dots \oplus \{c_n\} \oplus (N_X\times\{n\}).
	\]
	Let now $C_\bX^{(X)}\coloneqq\{h\colon X\to C_\bX\mid \bot_\bX\notin\im h\}$, and define 
 \[
 \tilde\eta_\bX\colon(C_\bX^{(X)}\dotcup X)^2\to C_\bX,\quad
	 (x,y)\mapsto\begin{cases}
		\bot_\bX & \text{if } x=y,\\
		\eta_\bX(x,y) & \text{if } x, y \in X, x\neq y,\\
		y(x) & \text{if } x\in X, y\in C_\bX^{(X)},\\
		x(y) & \text{if } x\in C_\bX^{(X)},y\in X,\\
		b_\bX & \text{else.}
	\end{cases}
\]
	Finally we define $K(\bX)\coloneqq (C_\bX^{(X)}\dotcup X,\leq_{K(\bX)})$, where
	\[
	(x,y)\leq_{K(\bX)} (u,v) \quad :\Longleftrightarrow \quad \tilde\eta_\bX(x,y) \leq_{C_\bX} \tilde\eta_\bX(u,v).
	\]
	Clearly, $K(\bX)$ is an echeloned space extending $\bX$. Moreover, $C_\bX\cong E(K(\bX))$. Denote by $\zeta_\bX$ the unique isomorphism from $C_\bX$ to $E(K(\bX))$. Then, in particular, the following diagram commutes:
	\begin{equation}\label{heart}
		\begin{tikzcd}
			(C_\bX^{(X)}\dotcup X)^2 \arrow[r,"\tilde\eta_\bX"]\arrow[dr,"\eta_{K(\bX)}"'] & C_\bX\arrow[d,"\zeta_\bX"]\\
			& E(K(\bX)).
		\end{tikzcd}
		\end{equation}
		In order to make a functor out of $K$, we need to define its action on morphisms. In addition to $\bX$ let us consider another finite echeloned space $\bY$. Suppose that $E(\bY)= \{\bot_\bY, d_1,\dots,d_m\}$ where
  \[
    \bot_\bY<_{E(\bY)} d_1<_{E(\bY)} \dots <_{E(\bY)} d_m.
  \] 
   Let $\phi\colon \bX\embedsto\bY$ be an embedding, \ie the following diagram commutes:
		\begin{equation}\label{star}
		\begin{tikzcd}
			X^2 \ar[r,"\eta_\bX"]\ar[d,hook',"\phi^2"'] & E(\bX) \ar[d,hook',"\hat\phi"]\\
			Y^2 \ar[r,"\eta_\bY"]& E(\bY). 
		\end{tikzcd}
		\end{equation}
    Next we define 
    \[
    \tilde\psi\colon C_\bX\to C_\bY,\quad 
		 x\mapsto \begin{cases}
			\bot_\bY & \text{if } x=\bot_\bX,\\
			b_\bY & \text{if } x=b_\bX,\\
			(k,0) & \text{if } x=(k,0), \, k\in N_X,\\
			d_j & \text{if } x=c_i\text{ and }\hat\phi(c_i)=d_j, i\in\{1,\dots,n\},\, j\in\{1,\dots,m\},\\
			(k,j) & \text{if } x=(k,i), \hat\phi(c_i)=d_j, k\in N_X, i\in\{1,\dots,n\},\, j\in\{1,\dots,m\}.
		\end{cases}
		\]
		Clearly, $\tilde\psi$ is an order embedding. Next define 
  \[
  \psi\colon K(\bX)\to K(\bY),\quad 
		x\mapsto\begin{cases}
			\phi(x) & \text{if } x\in X,\\
			\psi(h) & \text{if } h\in C_\bX^{(X)},
		\end{cases}
		\]
	where 
 \begin{equation}\label{psi}
 \psi(h)\colon Y\to C_\bY,\quad  
		y\mapsto \begin{cases}
			\tilde\psi(h(x)) & \text{if } y=\phi(x),\\
			b_\bY & \text{else.}
		\end{cases}
    \end{equation}
		 Figure~\ref{figpsi} illustrates these definitions.
	\begin{figure}
	\begin{tikzpicture}[mycontainer/.style={draw=gray, inner sep=1ex}]
		\begin{scope}
			\draw[fill=black] %
				(0,0) circle (3pt) node (a) {}
				(1,0) circle (3pt) node (b) {}
				(2,0) circle (3pt) node (c) {}
				(3,0) circle (3pt) node (d) {};
			\node (Z) at (2,0.3) {};
			\node (X) at (0.51,-0.7) {$X$};
			\node[fit=(a)(b)(c)(d)(X)(Z), draw] (boxX) {};
			\draw[fill=black] (1.5,3.5) circle (3pt) node (hh){} node[above=3pt] (h) {$h$};
			\draw  (a) -- (hh) node [above,pos=0.3,sloped] {$\scriptstyle{b_\bX}$};
			\draw  (b) -- (hh) node [above,pos = 0.3,sloped] {$\scriptstyle{(k,0)}$};
			\draw  (c) -- (hh) node [above,pos = 0.3,sloped,rotate=180] {$\scriptstyle{c_i}$};
			\draw  (d) -- (hh) node [above,pos = 0.3,sloped,rotate=180] {$\scriptstyle{(k,i)}$};
			\node (anc1) at (2.5,2) {};
		\end{scope}
\begin{scope}[xshift=60mm]
			\draw[fill=black] %
				(0,0) circle (3pt) node (aa) {}
				(1,0) circle (3pt) node (bb) {}
				(2,0) circle (3pt) node (cc) {}
				(3,0) circle (3pt) node (dd) {}
				(4,0) circle (3pt) node (ee) {};
			\node (end) at (5,0) {};
			\node (pZ) at (2,0.3) {};
			\node (pX) at (0.51,-0.7) {$\phi(X)$};
			\node (Y) at (4.5,-0.7) {$Y$};
			\node[fit=(aa)(bb)(cc)(dd)(pX)(pZ), draw] (boxXX) {};
			\draw[fill=black] (1.5,3.5) circle (3pt) node (phh){} node[above=3pt] (ph) {$\psi(h)$};
			\draw  (aa) -- (phh) node [above,pos=0.3,sloped] {$\scriptstyle{b_\bY}$}node [above,pos=0.5] (anc2){};
			\draw  (bb) -- (phh) node [above,pos = 0.3,sloped] {$\scriptstyle{(k,0)}$};
			\draw  (cc) -- (phh) node [above,pos = 0.3,sloped,rotate=180] {$\scriptstyle{d_j}$};
			\draw  (dd) -- (phh) node [above,pos = 0.35,sloped,rotate=180] {$\scriptstyle{(k,j)}$};
			\draw  (ee) -- (phh) node [above,pos = 0.35,sloped,rotate=180] {$\scriptstyle{b_\bY}$};
			\node[fit=(boxXX)(end), draw] (boxY) {};
			\node (anc2) at (0.5,2) {};
			\draw[-To,shorten <= 2pt,,shorten >= 2pt] (boxX) -- (boxY) node [above,pos=0.5] {$\phi$};
			\draw[-To] (anc1) -- (anc2) node [above,pos=0.5]{$\tilde\psi$};
			\draw[-To,shorten <= 1cm,shorten >= 1cm] (hh) -- (phh) node [above,pos=0.5]{$\psi$};
\end{scope}
	\end{tikzpicture}
	\caption{The construction of $\psi$}\label{figpsi}
	\end{figure}
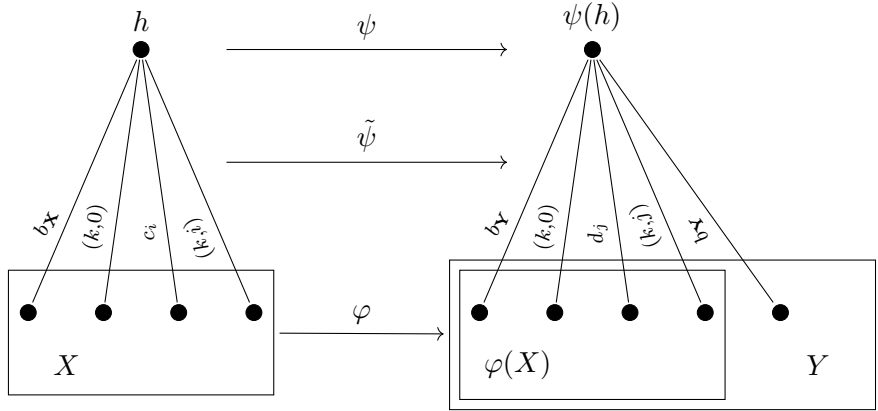
	Since $\tilde\psi$ and $\phi$ are injective, it follows that $\psi$ is injective, too. In order to see that $\psi\colon K(\bX)\to K(\bY)$ is indeed an embedding, according to Corollary~\ref{cor_sim_EMB}, we need to show that there exists $\hat\psi\colon E(K(\bX))\embedsto E(K(\bY))$ such that the following diagram commutes:
	\begin{equation}\label{psiemb}
		\begin{tikzcd}
			(C_\bX^{(X)}\dotcup X)^2\ar[r,"\eta_{K(\bX)}"]\ar[d,"\psi^2"'] & E(K(\bX))\ar[d,hook',"\hat\psi"]\\
			(C_\bY^{(Y)}\dotcup Y)^2\ar[r,"\eta_{K(\bY)}"] & E(K(\bY)).
		\end{tikzcd}
	\end{equation}
		To this end let us define $\hat\psi\coloneqq\zeta_\bY\circ \tilde\psi\circ\zeta_\bX^{-1}$. Now what remains is to show that the following diagram commutes:
		\begin{equation}\label{four}
		\begin{tikzcd}
			(C_\bX^{(X)}\dotcup X)^2\ar[rr,bend left,"\eta_{K(\bX)}"]\ar[d,hook',"\psi^2"']\ar[r,"\tilde\eta_\bX"] & C_\bX \ar[r,"\zeta_\bX","\cong"'] \ar[d,hook',"\tilde\psi"] & E(K(\bX))\ar[d,hook',"\hat\psi"]\\
			(C_\bY^{(Y)}\dotcup Y)^2\ar[rr,bend right, "\eta_{K(\bY)}"']\ar[r,"\tilde\eta_\bX"] & C_\bY \ar[r,"\zeta_\bY","\cong"']& E(K(\bY)).
		\end{tikzcd}
		\end{equation}
		Note that the upper and the lower triangle in \eqref{four} commute by \eqref{heart}. The right hand rectangle  of \eqref{four} commutes by construction. It remains to check that the left hand rectangle of \eqref{four} commutes. For this we take arbitrary but distinct $x,y\in X$ and $h_1,h_2\in C_\bX^{(X)}$, and chase them through this rectangle:
		\[
		\begin{tikzcd}[row sep=small]
			(x,x) \ar[ddd,mapsto,"\psi^2"'] \ar[r,mapsto,"\tilde\eta_\bX"]& \bot_\bX \ar[dd,mapsto,"\tilde\psi"]\\
			 \\
			 & \bot_\bY\ar[d,equals]\\
			 (\phi(x),\phi(x)) \ar[r,mapsto,"\tilde\eta_\bY"]& \bot_\bY
		\end{tikzcd}\qquad
		\begin{tikzcd}[row sep=small]
			(x,y) \ar[ddd,mapsto,"\psi^2"'] \ar[r,mapsto,"\tilde\eta_\bX"]& \eta_\bX(x,y)\ar[dd,mapsto,"\tilde\psi"]\\
			 \\
			 & \hat\phi(\eta_\bX(x,y))\ar[d,equals,"\text{by }\eqref{star}"]\\
			 (\phi(x),\phi(y)) \ar[r,mapsto,"\tilde\eta_\bY"]& \eta_\bY(\phi(x),\phi(y))
		\end{tikzcd}		
		\]
		\[
		\begin{tikzcd}[row sep=small]
			(h_1,x) \ar[ddd,mapsto,"\psi^2"'] \ar[r,mapsto,"\tilde\eta_\bX"]& h_1(x)\ar[dd,mapsto,"\tilde\psi"]\\
			 \\
			 & \tilde\psi(h_1(x))\ar[d,equals,"\text{by }\eqref{psi}"]\\
			 (\psi(h_1),\phi(x)) \ar[r,mapsto,"\tilde\eta_\bY"]& \psi(h_1)(\phi(x))
		\end{tikzcd}	
		\qquad	
		\begin{tikzcd}[row sep=small]
			(x,h_1) \ar[ddd,mapsto,"\psi^2"'] \ar[r,mapsto,"\tilde\eta_\bX"]& h_1(x)\ar[dd,mapsto,"\tilde\psi"]\\
			 \\
			 & \tilde\psi(h_1(x))\ar[d,equals,"\text{by }\eqref{psi}"]\\
			 (\phi(x),\psi(h_1)) \ar[r,mapsto,"\tilde\eta_\bY"]& \psi(h_1)(\phi(x))
		\end{tikzcd}	
		\]
		\[
		\begin{tikzcd}[row sep=small]
			(h_1,h_2) \ar[ddd,mapsto,"\psi^2"'] \ar[r,mapsto,"\tilde\eta_\bX"]& b_\bX \ar[dd,mapsto,"\tilde\psi"]\\
			 \\
			 & b_\bY \ar[d,equals]\\
			 (\psi(h_1),\psi(h_2)) \ar[r,mapsto,"\tilde\eta_\bY"]& b_\bY
		\end{tikzcd}	
		\qquad	
		\begin{tikzcd}[row sep=small]
			(h_1,h_1) \ar[ddd,mapsto,"\psi^2"'] \ar[r,mapsto,"\tilde\eta_\bX"]& \bot_\bX \ar[dd,mapsto,"\tilde\psi"]\\
			 \\
			 & \bot_\bY\ar[d,equals]\\
			 (\psi(h_1),\psi(h_1)) \ar[r,mapsto,"\tilde\eta_\bY"]& \bot_\bY.
		\end{tikzcd}
		\]
		Thus \eqref{four}, and,  in particular \eqref{psiemb}  commute. Consequently,  $\psi$ is an embedding of echeloned spaces. Let us define
		\[ K(\phi)\coloneqq \psi.\]
		Now that the action of $K$ on morphisms is defined, we still need to show that $K$ is indeed a functor, \ie
		\begin{enumerate}[label=(\roman*), ref=(\roman*)]
			\item \label{func1}$\forall\bX\in\calC\,:\, K(\id_\bX) = \id_{K(\bX)}$, and
			\item \label{func2} $\forall \phi_1\colon\bX\embedsto\bY,\, \phi_2\colon\bY\embedsto\bZ\,:\, K(\phi_2\circ\phi_1) = K(\phi_2)\circ K(\phi_1)$.
		\end{enumerate}
		
		\textbf{About \ref{func1}:} For some finite echeloned space $\bX$ consider $K(\id_\bX)\colon C_\bX^{(X)}\dotcup X\to C_\bX^{(X)}\dotcup X$. Clearly, for each $x\in X$ we have $K(\id_\bX)(x)= x$. So let $h\in C_\bX^{(X)}$. For simplicity of notation, let us denote $K(\id_\bX)$ by $\psi$. Then $(K(\id_\bX)(h))(x)= \tilde\psi(h(x))$, for each $x\in X$. 
		\begin{itemize}
			\item If $h(x)=b_\bX$, then $\tilde\psi(h(x)) = \tilde\psi(b_\bX) = b_\bX$;
			\item If $h(x)=(k,0)$, then $\tilde\psi(h(x))= \tilde\psi((k,0)) = (k,0)$;
			\item If $h(x)=c_i$, then $\tilde\psi(h(x)) = \tilde\psi(c_i) = \widehat{\id_\bX}(c_i) = c_i$;
			\item If $h(x) = (k,i)$, then $\tilde\psi(h(x)) = \tilde\psi((k,i)) = (k,i)$, because $\widehat{\id_\bX}(c_i) = c_i$;
		\end{itemize} 
		Thus, $K(\id_\bX) = \id_{K(\bX)}$. 
		
		\textbf{About \ref{func2}:} Given 
		\[
		\begin{tikzcd}
			\bX \ar[r,hook,"\phi_1"] & \bY\ar[r,hook,"\phi_2"] & \bZ
		\end{tikzcd}
		\]
		 Let us denote $\phi\coloneqq \phi_2\circ\phi_1$, $\psi\coloneqq K(\phi)$, $\psi_1 = K(\phi_1)$, and $\psi_2\coloneqq K(\phi_2)$. Then
		\[
		\begin{tikzcd}
			C_\bX^{(X)}\dotcup X \ar[r,hook,"\psi_1"] & C_\bY^{(Y)}\dotcup Y\ar[r,hook,"\psi_2"] & C_\bZ^{(Z)}\dotcup Z,\text{ and}
		\end{tikzcd}
		\]
		\[
		\begin{tikzcd}
			C_\bX \ar[r,hook,"\tilde\psi_1"] & C_\bY\ar[r,hook,"\tilde\psi_2"] & C_\bZ.
		\end{tikzcd}
		\]
		Suppose $E(\bX)=\{\bot_\bX,c_1,\dots,c_n\}$, $E(\bY) = \{\bot_\bY,d_1,\dots,d_m\}$, and $E(\bZ)=\{\bot_\bZ,e_1,\dots,e_s\}$, where
        \begin{multline*}
        \bot_\bX <_{E(\bX)} c_1 <_{E(\bX)} \dots <_{E(\bX)} c_n,\quad \bot_\bY <_{E(\bY)} d_1 <_{E(\bY)} \dots <_{E(\bY)} d_m, \\\text{and }
        \bot_\bZ <_{E(\bZ)} e_1 <_{E(\bZ)} \dots <_{E(\bZ)} e_s.
        \end{multline*}
  Then for every $x\in X$ we compute:
		\[
		\begin{tikzcd}
			x \ar[d,mapsto,"\psi_1"']\ar[r,mapsto,"\psi"]& \phi(x)\arrow[d,equals]\\
			\phi_1(x) \ar[r,mapsto,"\psi_2"]& \phi_2(\phi_1(x)).
		\end{tikzcd}
		\]
		So let $h\in C_\bX^{(X)}$. Then 
		\[
		\psi(h)\colon z\mapsto \begin{cases}
			\tilde\psi(h(x)) & \text{if } z=\phi(x) = \phi_2(\phi_1(x)),\\
			b_\bZ & \text{else},
		\end{cases}
		\]
		where
		\[
		\tilde\psi(h(x)) = \begin{cases}
			b_\bZ & \text{if } h(x)=b_\bX,\\
			(k,0) & \text{if } h(x) = (k,0),\\
			\hat\phi(h(x)) & \text{if } h(x)\in E(\bX),\\
			(k,j) & \text{if } h(x)=(k,i),\,i>0,\,\hat\phi(c_i) = e_j.
		\end{cases}
		\]Note that $\hat\phi = \hat\phi_2\circ\hat\phi_1$ (this follows from the uniqueness of $\hat\phi$ for $\phi$).
		
		On the other hand, 
		\[
		(K(\phi_2)\circ K(\phi_1))(h) = (\psi_2\circ\psi_1)(h)\in C_\bZ^{(Z)},
		\]
		and
		\[
		(\psi_2\circ\psi_1)(h)\colon z\mapsto \begin{cases}
			\tilde\psi_2((\psi_1(h))(y)) &\text{if }  z=\phi_2(y),\, y\in Y\\
			b_\bZ & \text{else.}
		\end{cases}
		\]
		Thus
		\[
		(\psi_2\circ\psi_1)(h)\colon z\mapsto\begin{cases}
			\tilde\psi_2(\tilde\psi_1(h(x))) & \text{if } z=\phi_2(\phi_1(x)),\, x\in X,\\
			\tilde\psi_2(b_\bY)=b_\bZ & \text{if } z=\phi_2(y),\, y\in Y\setminus\phi_1(X),\\
			b_\bZ & \text{else.}
		\end{cases}
		\]
		Figure~\ref{figcomp} illustrates the construction of the action of $K(\phi_2)\circ K(\phi_1)$. 
	\begin{figure}
	\begin{tikzpicture}[mycontainer/.style={draw=gray, inner sep=1ex},xscale=0.85]
		\begin{scope}
			\draw[fill=black] %
				(0,0) circle (3pt) node (a) {}
				(1,0) circle (3pt) node (b) {}
				(2,0) circle (3pt) node (c) {}
				(3,0) circle (3pt) node (d) {};
			\node (Z) at (2,0.3) {};
			\node (X) at (0.51,-0.7) {$X$};
			\node[fit=(a)(b)(c)(d)(X)(Z), draw] (boxX) {};
			\draw[fill=black] (1.5,3.5) circle (3pt) node (hh){} node[above=3pt] (h) {$h$};
			\draw  (a) -- (hh) node [above,pos=0.3,sloped] {$\scriptstyle{b_\bX}$};
			\draw  (b) -- (hh) node [above,pos = 0.3,sloped] {$\scriptstyle{(k,0)}$};
			\draw  (c) -- (hh) node [above,pos = 0.3,sloped,rotate=180] {$\scriptstyle{c_i}$};
			\draw  (d) -- (hh) node [above,pos = 0.3,sloped,rotate=180] {$\scriptstyle{(k,i)}$};
			\node (anc1) at (2.5,1.5) {};
		\end{scope}
		\begin{scope}[xshift=50mm]
			\draw[fill=black] %
				(0,0) circle (3pt) node (aa) {}
				(1,0) circle (3pt) node (bb) {}
				(2,0) circle (3pt) node (cc) {}
				(3,0) circle (3pt) node (dd) {}
				(4,0) circle (3pt) node (ee) {};
			\node (end) at (4,0) {};
			\node (pZ) at (2,0.3) {};
			\node (pX) at (0.51,-0.7) {$\phi_1(X)$};
			\node (Y) at (4.0,-0.7) {$Y$};
			\node at (4.0,2.8){$\hat\phi_1(c_i)=d_\ell$};
			\node[fit=(aa)(bb)(cc)(dd)(pX)(pZ), draw] (boxXX) {};
			\draw[fill=black] (1.5,3.5) circle (3pt) node (phh){} node[above=3pt] (ph) {$\psi_1(h)$};
			\draw  (aa) -- (phh) node [above,pos=0.3,sloped] {$\scriptstyle{b_\bY}$}node [above,pos=0.5] (anc2){};
			\draw  (bb) -- (phh) node [above,pos = 0.3,sloped] {$\scriptstyle{(k,0)}$};
			\draw  (cc) -- (phh) node [above,pos = 0.3,sloped,rotate=180] {$\scriptstyle{d_\ell}$};
			\draw  (dd) -- (phh) node [above,pos = 0.35,sloped,rotate=180] {$\scriptstyle{(k,\ell)}$};
			\draw  (ee) -- (phh) node [above,pos = 0.35,sloped,rotate=180] {$\scriptstyle{b_\bY}$};
			\node[fit=(boxXX)(end), draw] (boxY) {};
			\node (anc2) at (0,1.5) {};
			\draw[-To,shorten <= 2pt,,shorten >= 2pt] (boxX) -- (boxY) node [above,pos=0.5] {$\phi_1$};
			\draw[-To] (anc1) -- (anc2) node [above,pos=0.5]{$\tilde\psi_1$};
			\draw[-To,shorten <= 1cm,shorten >= 1cm] (hh) -- (phh) node [above,pos=0.5]{$\psi_1$};
			\node (anc3) at (3.5,1.5) {};
		\end{scope}
		\begin{scope}[xshift=115mm]
			\draw[fill=black] %
				(0,0) circle (3pt) node (aaa) {}
				(1,0) circle (3pt) node (bbb) {}
				(2,0) circle (3pt) node (ccc) {}
				(3,0) circle (3pt) node (ddd) {}
				(4,0) circle (3pt) node (eee) {}
				(5.5,0) circle (3pt) node (fff) {};
			\node (eend) at (4.5,0) {}; %(end) 
			\node (eeend) at (5.5,0) {}; %(end) 
			\node (ppZ) at (2,0.3) {};
			\node (ppX) at (0.51,-0.7) {$\phi_2(\phi_1(X))$};
			\node (pY) at (4.0,-0.7) {$\phi_2(Y)$};
			\node (pY) at (5.5,-0.7) {$Z$};
			\node at (4.5,2.8){$\hat\phi_2(d_\ell)=e_j$};
			\node[fit=(aaa)(bbb)(ccc)(ddd)(ppX)(ppZ), draw] (bboxXX) {};
			\draw[fill=black] (2.0,3.5) circle (3pt) node (pphh){} node[above=3pt] (ph) {$\psi_2((\psi_1(h))$};
			\draw  (aaa) -- (pphh) node [above,pos=0.3,sloped] {$\scriptstyle{b_\bZ}$}node [above,pos=0.5] (anc2){};
			\draw  (bbb) -- (pphh) node [above,pos = 0.3,sloped] {$\scriptstyle{(k,0)}$};
			\draw  (ccc) -- (pphh) node [above,pos = 0.3,sloped,rotate=0] {$\scriptstyle{e_j}$};
			\draw  (ddd) -- (pphh) node [above,pos = 0.35,sloped,rotate=180] {$\scriptstyle{(k,j)}$};
			\draw  (eee) -- (pphh) node [above,pos = 0.35,sloped,rotate=180] {$\scriptstyle{b_\bZ}$};
			\draw  (fff) -- (pphh) node [above,pos = 0.35,sloped,rotate=180] {$\scriptstyle{b_\bZ}$};
			\node[fit=(bboxXX)(eend), draw] (bboxY) {};
			\node[fit=(bboxY)(eeend), draw] (bboxY) {};
			\node (anc4) at (0,1.5) {};
			\draw[-To,shorten <= 2pt,,shorten >= 2pt] (boxY) -- (bboxY) node [above,pos=0.5] {$\phi_2$};
			\draw[-To] (anc3) -- (anc4) node [above,pos=0.5]{$\tilde\psi_2$};
			\draw[-To,shorten <= 1cm,shorten >= 1cm] (phh) -- (pphh) node [above,pos=0.5]{$\psi_2$};
		\end{scope}
	\end{tikzpicture}
	\caption{The composition $K(\phi_2)\circ K(\phi_1)$}\label{figcomp}
	\end{figure}
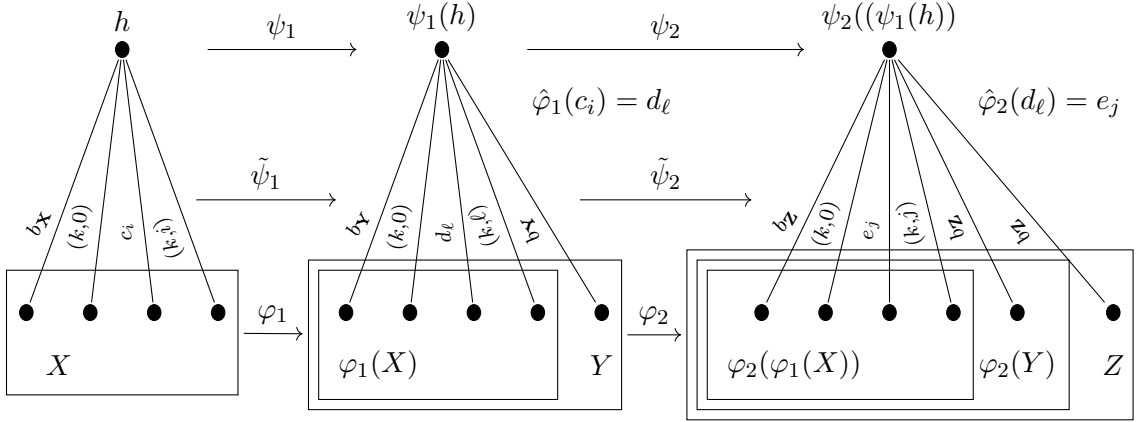
	It remains to check that $\tilde\psi=\tilde\psi_2\circ\tilde\psi_1$. As usual, we do so by considering possible cases separately.  During these computations, assume that for $c_i \in E(\bX)\setminus\{\bot_\bX\}$, we have  that $\hat\phi_1(c_i)=d_\ell$ and $\hat\phi_2(d_\ell)=e_j$. Then 
	\begin{gather*}
		\tilde\psi_2(\tilde\psi_1(\bot_\bX))  = \tilde\psi_2(\bot_\bY) = \bot_\bZ = \tilde\psi(\bot_\bX),\\
		\tilde\psi_2(\tilde\psi_1(b_\bX)) = \tilde\psi_2(b_\bY) = b_\bZ = \tilde\psi(b_\bX),\\
		\tilde\psi_2(\tilde\psi_1((k,0))) = \tilde\psi_2((k,0)) = (k,0) = \tilde\psi((k,0)),\\
		\tilde\psi_2(\tilde\psi_1(c_i)) = \tilde\psi_2(\hat\phi_1(c_i)) = \hat\phi_2(\hat\phi_1(c_i)) = \hat\phi(c_i) = \tilde\psi(c_i),\\
		\tilde\psi_2(\tilde\psi_1((k,i))) = \tilde\psi_2((k,\ell)) = (k,j) = \tilde\psi((k,i)).
	\end{gather*}
	This finishes the proof that $K$ is a functor.
	\begin{PROP}
		$K\colon\cat{A}\to\cat{C}$ is a \Katetov functor.
	\end{PROP}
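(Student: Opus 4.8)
The plan is to verify the two defining properties of a \Katetov functor: that $K$ preserves embeddings, and that there is a natural embedding $\lambda\colon\Id\embedsto K$ satisfying the one-point extension condition. The first property is already in hand, since in the course of defining $K$ on morphisms we established that $K(\phi)=\psi$ is an embedding whenever $\phi$ is. So the work concentrates on $\lambda$ and the extension condition.

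For the natural transformation I would take $\lambda_\bX\colon\bX\to K(\bX)$ to be the inclusion of $X$ into $C_\bX^{(X)}\dotcup X$. This is an embedding because $K(\bX)$ extends $\bX$ (equivalently, $\tilde\eta_\bX$ restricts to $\eta_\bX$ on $X^2$ and $\leq_{C_\bX}$ restricts to $\leq_{E(\bX)}$ on $E(\bX)$). Naturality is immediate from the definition of $\psi$: for an embedding $\phi\colon\bX\embedsto\bY$ one has $K(\phi)(x)=\phi(x)$ for every $x\in X$, so $K(\phi)\circ\lambda_\bX=\lambda_\bY\circ\phi$.

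The substantive step is the one-point extension property. Given $\bA\in\calC$ and a one-point extension $\bB$ of $\bA$, write $B=A\dotcup\{\ast\}$ and let $\iota\colon\bA\embedsto\bB$ be the inclusion, which yields $\hat\iota\colon E(\bA)\embedsto E(\bB)$ by Corollary~\ref{cor_sim_EMB}. Since the only pairs of $B^2$ outside $A^2$ are the at most $|A|$ many pairs $\{(\ast,a)\mid a\in A\}$ taken up to $\sim_\bB$, the chain $E(\bB)$ arises from $\hat\iota(E(\bA))$ by inserting at most $|A|$ new points, each lying strictly above $\bot_\bB$ by axiom~(ii) of Definition~\ref{orgDEF_sim}. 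I would then use that $C_\bA$ was tailored for exactly this situation: it contains a fresh copy $N_A\times\{i\}$ of the $|A|$-element set $N_A$ strictly between $\bot_\bA$ and $\min(E(\bA)\setminus\{\bot_\bA\})$, strictly between each pair of consecutive elements of $E(\bA)$, and strictly above $\max E(\bA)$. Because at most $|A|$ of the new points of $E(\bB)$ can land in any single such gap, I can choose an order embedding $\mu\colon E(\bB)\embedsto C_\bA$ that fixes the minimum, satisfies $\mu\circ\hat\iota=\id_{E(\bA)}$, and carries each new point of $E(\bB)$ into the block $N_A\times\{i\}$ of the gap it occupies. Setting $g(a)=a$ for $a\in A$ and $g(\ast)=h$, where $h\colon A\to C_\bA,\ a\mapsto\mu(\eta_\bB(\ast,a))$, one gets $h\in C_\bA^{(A)}$ (since $\bot_\bB$ is never a value $\eta_\bB(\ast,a)$ and $\mu$ preserves the minimum), an injection $g\colon\bB\to K(\bA)$ with $g\circ\iota=\lambda_\bA$, and — taking $\hat g\coloneqq\zeta_\bA\circ\mu$ and running the handful of pair types through diagram~\eqref{heart}, using $\tilde\eta_\bA(h,a)=h(a)$, $\tilde\eta_\bA(a,a')=\eta_\bA(a,a')$ and $\mu\circ\hat\iota=\id$ — the commuting square of Corollary~\ref{cor_sim_EMB} that certifies $g$ is an embedding. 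Together with the triangle $g\circ\iota=\lambda_\bA$, this is precisely the defining diagram of a \Katetov functor.

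The main obstacle is the combinatorial fact underlying the choice of $\mu$: one must know that $C_\bA$ has enough fresh points in each gap between consecutive echelon values to absorb all new distances produced by a one-point extension of $\bA$. This is not automatic — it is the very reason each auxiliary block $N_A\times\{i\}$ was assigned cardinality $|A|$ — but once it is isolated, the rest is only a matter of unwinding the definitions of $\tilde\eta$, $\zeta$, and $K(\phi)$ together with the diagrams already recorded above.
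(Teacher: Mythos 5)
Your proposal is correct and follows essentially the same route as the paper: the identical embedding $\lambda_\bX$ with naturality from $K(\phi)(x)=\phi(x)$, and for a one-point extension the order embedding $\mu\colon E(\bB)\embedsto C_\bA$ (the paper's $\gamma$, written explicitly via the enumeration $d_{k,j}\mapsto(k,j)$) together with $g(\ast)=h=\mu\circ\eta_\bB(\ast,\cdot)\in C_\bA^{(A)}$, verified through the same commuting diagrams and Corollary~\ref{cor_sim_EMB}. The combinatorial point you isolate --- that each block $N_A\times\{i\}$ has $|A|$ elements, enough to absorb all new echelon values of a one-point extension --- is exactly the bound $i_0+\cdots+i_n\leq|X|$ used in the paper's construction.
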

	\begin{proof}
		For every finite echeloned space $\bX$ let $\lambda_\bX\colon \bX\embedsto K(\bX)$ be the identical embedding. It is not hard to check that this defines a natural transformation $\lambda\colon\Id\embedsto K$.
		
		Let now $\bX$ be a finite echeloned space and let $\bY$ be a one-point extension  of $\bX$ (\ie $Y=X\dotcup\{y\}$ and the identical embedding of $X$ into $Y$ is an embedding of echeloned spaces). Denote the identical embedding of $\bX$ into $\bY$ by $e$. Suppose $E(\bX)=\{\bot_\bX,c_1,\dots,c_n\}$, where
		\[
		\bot_\bX<_{E(\bX)} c_1 <_{E(\bX)} c_2 <_{E(\bX)}\dots <_{E(\bX)} c_n.
		\] 
		Then $E(\bY)$ is of the shape:
		\begin{multline*}
		\bot_\bY <_{E(\bY)} d_{1,0} <_{E(\bY)} \dots d_{i_0,0} <_{E(\bY)} \hat{e}(c_1) <_{E(\bY)} d_{1,1} <_{E(\bY)} \dots \\ 
		\dots<_{E(\bY)} d_{i_1,1} <_{E(\bY)} \hat{e}(c_2)<_{E(\bY)}\dots <_{E(\bY)} \hat{e}(c_n) <_{E(\bY)} d_{1,n} <_{E(\bY)} \dots <_{E(\bY)} d_{i_n,n} ,
		\end{multline*}
		where $i_0+i_1+\dots+i_n\leq|X|$. Define
      \[ 
        h\colon X\to C_\bX\setminus\{\bot_\bX\},\quad
		 x\mapsto\begin{cases}
			(k,j) & \text{if } \eta_\bY(y,e(x)) = d_{k,j}, \text{ where }j=0,\dots,n,\, k=1,\dots, i_j,\\
			c_j & \text{if } \eta_\bY(y,e(x)) = \hat{e}(c_j),\text{ where } j=1,\dots,n.
		\end{cases}
		\]
		Finally define 
        \[
        g\colon Y\to X\dotcup C_\bX^{(X)},\quad  
		x\mapsto\begin{cases}
			x & \text{if } x\in X,\\
			h & \text{if } x=y.
		\end{cases}
		\]
		By construction, $g$ is injective. Next we show that $g\colon\bY\to K(\bX)$ is an embedding. Let $\gamma\colon E(\bY)\to C_\bX$  be given by
		\[
		\gamma\colon \hat{e}(c_j)\mapsto c_j,\quad \bot_\bY\mapsto\bot_\bX,\quad d_{k,j}\mapsto (k,j)\quad \text{ for all } j\in\{0,\dots,n\},\, k\in \{1,\dots,i_j\}.
		\]
		Clearly, $\gamma$ is an order embedding and the following diagram commutes:
		\[
		\begin{tikzcd}
			X^2 \ar[r,"\eta_\bX"] \ar[d,hook',"e^2"'] & E(\bX)\ar[r,hook,"="]\ar[d,hook',"\hat{e}"] & C_\bX \ar[d,equals]\\
			Y^2\ar[r,"\eta_\bY"] & E(\bY)\ar[r,hook,"\gamma"] & C_\bX.
		\end{tikzcd}
		\]
		Next we show that the following diagram commutes, too:
		\begin{equation}\label{fourstar}
		\begin{tikzcd}
			Y^2  \ar[r,"\eta_\bY"] \ar[d,hook',"g^2"']& E(\bY)\ar[d,hook',"\gamma"] \\
			(C_\bX^{(X)}\dotcup X)^2 \ar[r,"\tilde\eta_\bX"]& C_\bX.
		\end{tikzcd}
		\end{equation}
		To this end let $(x,z)\in Y^2$. We distinguish six cases:
		\begin{enumerate}[label=\textbf{Case \arabic*:}, ref=\arabic*,nosep,align=left,leftmargin=0em,labelindent=0em,itemindent=1em,labelsep=0.5em,labelwidth=!]
		\item Suppose that $x=z\neq y$. Then
		\[
		\begin{tikzcd}[row sep=small]
		(x,z) \ar[r,mapsto,"\eta_\bY"]\ar[ddd,mapsto,"g^2"'] & \bot_\bY\ar[dd,mapsto,"\gamma"]\\
		\\
		&  \bot_\bX\ar[d,equals]\\
		(x,z) \ar[r,mapsto,"\tilde\eta_\bX"]& \bot_\bX.
		\end{tikzcd}
		\]
		\item Suppose that $x=z=y$. Then
		\[
		\begin{tikzcd}[row sep=small]
		(x,z) \ar[r,mapsto,"\eta_\bY"]\ar[ddd,mapsto,"g^2"'] & \bot_\bY\ar[dd,mapsto,"\gamma"]\\
		\\
		&  \bot_\bX\ar[d,equals]\\
		(h,h) \ar[r,mapsto,"\tilde\eta_\bX"]& \bot_\bX.
		\end{tikzcd}
		\]
		\item Suppose that $x\neq z$ and that $x,z\in X$. Then $\eta_\bX(x,z)=c_i$ for some $i$ and: 
		\[
		\begin{tikzcd}[row sep=small]
		(x,z) \ar[r,mapsto,"\eta_\bY"]\ar[ddd,mapsto,"g^2"'] & \hat{e}(c_i)\ar[dd,mapsto,"\gamma"]\\
		\\
		&  c_i\ar[d,equals]\\
		(x,z) \ar[r,mapsto,"\tilde\eta_\bX"]& c_i.
		\end{tikzcd}
		\]
		\item \label{cc4} Suppose that $x\neq z$ and $x=y$, and that $\eta_\bY(y,e(z))= d_{k,j}$ for some $j\in\{0,\dots,n\}$, and $k\in\{1,\dots,i_j\}$. Then 
		\[
		\begin{tikzcd}[row sep=small]
		(x,z) \ar[r,mapsto,"\eta_\bY"]\ar[ddd,mapsto,"g^2"'] & d_{k,j}\ar[dd,mapsto,"\gamma"]\\
		\\
		&  (k,j)\ar[d,equals]\\
		(h,z) \ar[r,mapsto,"\tilde\eta_\bX"]& (k,j).
		\end{tikzcd}
		\]
		\item\label{cc5} Suppose that $x\neq z$ and $x=y$, and that $\eta_\bY(x,e(z))= \hat{e}(c_j)$  for some $j\in\{0,\dots,n\}$. Then
		\[
		\begin{tikzcd}[row sep=small]
		(x,z) \ar[r,mapsto,"\eta_\bY"]\ar[ddd,mapsto,"g^2"'] & \hat{e}(c_j)\ar[dd,mapsto,"\gamma"]\\
		\\
		&  c_j\ar[d,equals]\\
		(h,z) \ar[r,mapsto,"\tilde\eta_\bX"]& c_j.
		\end{tikzcd}
		\]
		\item If $x\neq z$ and $z=y$, then we proceed analogously as in cases \ref{cc4} and \ref{cc5}.
		\end{enumerate}

		At this point we may conclude that \eqref{fourstar} commutes. Combining this with \eqref{heart}, we obtain  that  the following diagram commutes, too:
		\[
		\begin{tikzcd}
			Y^2 \ar[r,"\eta_\bY"]\ar[d,hook',"g^2"']& E(\bY)\ar[d,hook',"\gamma"]\\
			(C_\bX^{(X)}\dotcup X)^2 \ar[r,"\tilde\eta_\bX"]\ar[dr,"\eta_{K(\bX)}"']& C_\bX\ar[d,"\zeta_\bX","\cong"'] \\
			& E(K(\bX)).
		\end{tikzcd}
		\]
		Hence, by Corollary~\ref{cor_sim_EMB}, $g$ is an embedding with $\hat{g}=\zeta_\bX\circ\gamma$. 
		
		Finally, by the definition of $g$, the following diagram commutes:
		\[
		\begin{tikzcd}
			\bX \ar[d,hook',"="']\ar[dr,hook',"\lambda_\bX","="']\\
			\bY\ar[r,hook,"g"'] & K(\bX).
		\end{tikzcd}
		\]
		Thus, indeed, $K$ is a \Katetov functor.
	\end{proof}

\section{An alternative approach to echeloned spaces}%\label{sec_alternative}

As established in Remark \ref{rem_echelon} an echeloned space may be perceived as a set of points accompanied by a specific $4$-ary relation. The following definition records yet another, equivalent, approach to these structures.

\begin{DEF}\label{altDEF_sim}
An \emph{echeloned map} on a set $X$ is a surjective function $f \colon X^2 \twoheadrightarrow (C, \leq)$ for which:
\begin{enumerate}[label=(\roman*), ref=(\roman*)]
  \item $(C, \leq)$ is a linear order with minimum $\bot_C$,
  \item $f^{-1}(\bot_C) = \Delta_X$, and
  \item $f(x, y) = f(y, x)$, for all $x, y \in X$.
\end{enumerate}
We refer to $X$ as the \emph{set of points} of $f$.
\end{DEF}

\begin{REM} An echeloned map on a set $X$ is \emph{finite} if $X$ is finite. 
\end{REM}

\begin{DEF} Let $f \colon X^2 \twoheadrightarrow (C, \leq)$ and $g \colon Y^2 \twoheadrightarrow (D, \leq)$ be two echeloned maps on $X$ and $Y$ respectively.
We say that a function $\chi\colon X\to Y$ is a \emph{homomorphism} from $f$ to $g$, and write $\chi\colon f\to g$,  if there
exists an order-preserving map $\bar{\chi} \colon (C, \leq) \rightarrow (D, \leq)$ such that the diagram below commutes:
\[\begin{tikzcd}
X^2 \arrow[r, twoheadrightarrow, "f"]\arrow[d, rightarrow, "\chi^2"{name=L, left}] & (C, \leq)\arrow[d, rightarrow, "\bar{\chi}"] \\
Y^2 \arrow[r, twoheadrightarrow, "g"]& (D, \leq).
\end{tikzcd}
\]
We call $\chi$ an \emph{embedding} if $\chi$ is injective and $\bar{\chi}$ is an order embedding.
\end{DEF}

\begin{REM}
If $\bar{\chi}$ in the above definition exists, then it is unique.
\end{REM}

\begin{OBS}
We describe the connection between echeloned spaces and Definition~\ref{altDEF_sim} in some detail.

Let $\cat{A}$ be the category of echeloned spaces with homomorphisms, and let $\cat{B}$ be the category of echeloned maps with homomorphisms. Clearly, if $\bX = (X, \leq_\bX)$ is an echeloned space, then $\eta_\bX$ is an echeloned map. Moreover, the homomorphisms between two echeloned spaces are the same as those between their corresponding echeloned maps (see Lemma~\ref{lem_sim_HOM}).  This means that $F \colon \cat{A} \to \cat{B},\, \bX \mapsto \eta_{\bX},\, h \mapsto h$ is a well-defined functor.

Now, define a functor $G \colon \cat{B} \to \cat{A}$. To every echeloned map $f \colon X^2 
\twoheadrightarrow (C, \leq)$ we associate an echeloned space $\bX_f = (X, \leq_\bX)$ according to the following rule:
\[(x,y) \leq_\bX (u,v) \quad :\Longleftrightarrow \quad f(x,y) \leq f(u, v).\]
Again, by Lemma~\ref{lem_sim_HOM},  $h \colon f \to g$ is a homomorphism of echeloned maps, then $h \colon \bX_f \to \bX_g$ is a homomorphism, too. Hence the assignment $G\colon\cat{B}\to\cat{A},\, f \mapsto \bX_f,\, h \mapsto h$ is a well-defined functor. Note that $G\circ F = \Id_{\cat{A}}$. Conversely, for each echeloned map $f \colon X^2 \twoheadrightarrow (C, \leq)$ note that the identity function $\id_X$ is an isomorphism from $F(G(f))$ to $f$. Moreover, $(\phi_f)_{f\in\ob(\cat{B})}\colon F\circ G\to \Id_{\cat{B}}$ with $\phi_f \colon F(G(f)) \to f, \phi_f \coloneqq  \id_X$ is a natural isomorphism.  This shows that  $\cat{A}$ and $\cat{B}$ are equivalent categories. Note that $F$ and $G$,
both, preserve finiteness and embeddings.
\end{OBS}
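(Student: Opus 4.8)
The statement to establish is that the category $\cat{A}$ of echeloned spaces with homomorphisms and the category $\cat{B}$ of echeloned maps with homomorphisms are equivalent, via a functor $F\colon\cat{A}\to\cat{B}$ sending $\bX=(X,\leq_\bX)$ to its quotient map $\eta_\bX$ and a functor $G\colon\cat{B}\to\cat{A}$ sending $f$ to the echeloned space $\bX_f$ whose echelon is the pullback of $\leq$ along $f$, and moreover that $F$ and $G$ preserve finiteness and embeddings. The plan is to verify that $F$ and $G$ are well defined functors, to show $G\circ F=\Id_{\cat{A}}$ on the nose, to exhibit a natural isomorphism $F\circ G\cong\Id_{\cat{B}}$, and finally to record the preservation properties.

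First I would check well-definedness. For an echeloned space $\bX$, the quotient map $\eta_\bX\colon X^2\twoheadrightarrow E(\bX)$ is surjective onto a linear order whose minimum is $\bot_\bX=[(x,x)]_{\sim_\bX}$ by axioms (i) and (ii) of Definition~\ref{orgDEF_sim}; it satisfies $\eta_\bX^{-1}(\bot_\bX)=\Delta_X$ by axiom (ii); and it is symmetric because axiom (iii), applied in both directions, yields $(x,y)\sim_\bX(y,x)$. Hence $\eta_\bX$ is an echeloned map, and by Lemma~\ref{lem_sim_HOM} (whose map $\hat h$ is exactly the required $\bar\chi$) the assignment $h\mapsto h$ is correct on morphisms and respects composition, so $F$ is a functor. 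Dually, given an echeloned map $f\colon X^2\twoheadrightarrow(C,\leq)$, the relation $(x,y)\leq_{\bX_f}(u,v):\Longleftrightarrow f(x,y)\leq f(u,v)$ is a total preorder on $X^2$; axiom (i) holds since $f(x,x)=\bot_C$, axiom (ii) is precisely where the hypothesis $f^{-1}(\bot_C)=\Delta_X$ enters, and axiom (iii) follows from $f(x,y)=f(y,x)$. Lemma~\ref{lem_sim_HOM} again gives functoriality of $G$ on morphisms.

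Next, $G\circ F=\Id_{\cat{A}}$ is a matter of unwinding definitions: for an echeloned space $\bX$ one has $(x,y)\leq_{\bX_{\eta_\bX}}(u,v)\iff\eta_\bX(x,y)\leq_{E(\bX)}\eta_\bX(u,v)\iff(x,y)\leq_\bX(u,v)$, so $\bX_{\eta_\bX}=\bX$, and both functors are the identity on morphisms. For $F\circ G$ I would define the candidate natural isomorphism $\phi$ with components $\phi_f\coloneqq\id_X$. Here the key observation is that the $\sim_{\bX_f}$-classes are exactly the fibres of $f$, so $[(x,y)]_{\sim_{\bX_f}}\mapsto f(x,y)$ is a well-defined order isomorphism $E(\bX_f)\to C$; taking it as $\overline{\phi_f}$ exhibits $\id_X$ as an isomorphism from $F(G(f))=\eta_{\bX_f}$ to $f$ in $\cat{B}$. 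Naturality of $\phi$ is immediate because every morphism involved acts as the identity on the underlying point set. Together with $G\circ F=\Id_{\cat{A}}$ this gives the equivalence $\cat{A}\simeq\cat{B}$.

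Preservation of finiteness is clear, since $F(\bX)$ and $G(f)$ have the same underlying point sets as $\bX$ and $f$. Preservation of embeddings follows from Corollary~\ref{cor_sim_EMB}: a morphism $h$ of echeloned spaces is an embedding precisely when $h$ is injective and $\hat h$ is an order embedding, which is exactly the defining condition for $h$ to be an embedding of the associated echeloned maps, and symmetrically for $G$. I do not anticipate a genuine obstacle; the only subtlety is bookkeeping --- one must not overclaim a strict equality $F\circ G=\Id_{\cat{B}}$ (it holds only up to the isomorphism $\phi$), and one must be careful to check the minimum-element and symmetry clauses of Definitions~\ref{orgDEF_sim} and~\ref{altDEF_sim} when confirming that $\eta_\bX$ and $\bX_f$ genuinely live in the target categories.
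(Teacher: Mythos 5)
Your proposal is correct and follows essentially the same route as the paper's own observation: $F$ via Lemma~\ref{lem_sim_HOM}, $G$ via the pullback echelon, $G\circ F=\Id_{\cat{A}}$ on the nose, $F\circ G\cong\Id_{\cat{B}}$ through the components $\phi_f=\id_X$ (with the induced order isomorphism $E(\bX_f)\to C$), and preservation of embeddings via Corollary~\ref{cor_sim_EMB}. The only difference is that you spell out the axiom-checking and the fibre/$\sim_{\bX_f}$-class identification more explicitly than the paper does, which is fine.
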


All the proofs from the previous sections can be rewritten easily in this alternative approach. 

\section*{Acknowledgements}
The authors thank Dragan \Masulovic for enlightening discussions on \Katetov functors and Colin Jahel for helpful discussions concerning the Ramsey property.

\bibliographystyle{habbrv}
\bibliography{SimStr}
\end{document}